\theoremstyle{plain}
\newtheorem*{Theorem}{Theorem}
\newtheorem{theorem}{Theorem}[section]
\newtheorem{proposition}[theorem]{Proposition}
\newtheorem{lemma}[theorem]{Lemma}
\newtheorem{corollary}[theorem]{Corollary}
\theoremstyle{definition}
\newtheorem{definition}[theorem]{Definition}
\newtheorem{example}[theorem]{Example}
\newtheorem{question}[theorem]{Question}
\theoremstyle{remark}
\newtheorem{remark}[theorem]{Remark}
\newtheorem*{claim}{Claim}
\newtheorem{chunk}[theorem]{}
\numberwithin{equation}{theorem}
\newcommand{\bbZ}{\mathbb{Z}}
\newcommand{\ulx}{\underline{x}}
\newcommand{\mcC}{\mathcal{C}}
\newcommand{\mcV}{\mathcal{V}}
\newcommand{\mcX}{\mathcal{X}}
\newcommand{\mcY}{\mathcal{Y}}
\newcommand{\fm}{\mathfrak{m}}
\newcommand{\fp}{\mathfrak{p}}
\newcommand{\fq}{\mathfrak{q}}
\newcommand{\sfa}{\mathsf{a}}
\newcommand{\sfD}{\mathsf{D}}
\newcommand{\QQ}{\mathsf{Q}}
\newcommand{\nil}{\operatorname{\mathsf{nil}}}
\newcommand{\rad}{\operatorname{\mathsf{rad}}}
\newcommand{\proj}{\operatorname{\mathsf{proj}}}
\newcommand{\susp}{\mathsf{\Sigma}}
\newcommand{\dsing}{\mathsf{D}_{\rm sg}}
\newcommand{\dbcat}{\mathsf{D}^{\mathsf b}}
\newcommand{\eps}{\varepsilon}
\newcommand{\vf}{\varphi}
\newcommand{\ges}{\geqslant}
\newcommand{\lra}{\longrightarrow}
\newcommand{\thra}{\twoheadrightarrow}
\newcommand{\xra}{\xrightarrow}
\newcommand{\add}{\operatorname{add}}
\newcommand{\Add}{\operatorname{Add}}
\newcommand{\ann}{{\operatorname{ann}\,}}
\newcommand{\depth}{{\operatorname{depth}\,}}
\renewcommand{\dim}{{\operatorname{dim}\,}}
\newcommand{\End}{\operatorname{End}}
\newcommand{\Ext}{\operatorname{Ext}}
\newcommand{\gldim}{{\operatorname{gldim}\,}}
\renewcommand{\mod}{{\operatorname{mod}\,}}
\newcommand{\Mod}{{\operatorname{Mod}\,}}
\newcommand{\rank}{\operatorname{rank}}
\newcommand{\Reg}{{\operatorname{Reg}\,}}
\newcommand{\Sing}{{\operatorname{Sing}\,}}
\newcommand{\Spec}{{\operatorname{Spec}\,}}
\newcommand{\syz}{{\Omega}\,}
\newcommand{\thick}{\operatorname{thick}}
\newcommand{\Tor}{\operatorname{Tor}}
\newcommand{\sad}[1]{|\syz^{*}_{\Lambda}{#1}|}
\newcommand{\cent}[1]{{#1}^{\!\mathsf c}}
\newcommand{\env}[1]{{#1}^{\!\mathsf e}}
\newcommand{\ndiff}[2]{{\operatorname{N}({#2}/{#1})}}
\newcommand{\opp}[1]{{ {#1}^{\!\mathsf o}}}
\newcommand{\can}[1]{{\mathsf{ca}(#1)}}
\newcommand{\cande}[2]{{\mathsf{ca}^{#1}(#2)}}
\newcommand{\Ker}{\operatorname{Ker}}
\newcommand{\Hom}{\operatorname{Hom}}
\begin{document}

\title[Annihilation and strong generators]{Annihilation of cohomology and\\
strong generation of module categories}

\author{Srikanth B. Iyengar}
\address{Department of Mathematics, University of Nebraska, Lincoln, NE 68588-0130, USA}
\curraddr{Department of Mathematics, University of Utah, Salt Lake City, UT 84112-0090, USA}
\email{iyengar@math.utah.edu}

\author{Ryo Takahashi}
\address{Graduate School of Mathematics, Nagoya University, Furocho, Chikusaku, Nagoya 464-8602, Japan}
\email{takahashi@math.nagoya-u.ac.jp}

\date{\today}

\subjclass[2010]{13D07, 13D09, 13C60, 16E30, 16E35, 16D90 }
\keywords{cohomololgy annihilator, Ext module, strong generation}

\thanks{Part of this article is based on work supported by the National Science Foundation under Grant  No.\,0932078000, while the authors were in residence at the Mathematical Sciences Research Institute in Berkeley, California, during the 2012--13 Special Year in Commutative Algebra.  SBI was partly supported by NSF grant DMS-1201889 and grant 228007  from the Simons Foundation; RT was partly supported by JSPS Grant-in-Aid for Young Scientists (B) 22740008, JSPS Grant-in-Aid for Scientific Research (C) 25400038 and JSPS Postdoctoral Fellowships for Research Abroad.}

\begin{abstract}
The cohomology annihilator of a noetherian ring that is finitely generated as a module over its center is introduced. Results are established linking the existence of non-trivial cohomology annihilators and the existence of strong generators for the category of finitely generated modules. Exploiting this link, results of Popescu and Roczen, and Wang concerning cohomology annihilators of commutative rings, and also  results of  Aihara and Takahashi, Keller and Van den Bergh, and Rouquier on strong finite generation of the corresponding bounded derived category,  are generalized to cover excellent local rings and also rings essentially of finite type over a field.
\end{abstract}

\maketitle

\setcounter{tocdepth}{1}
\tableofcontents

\section{Introduction}

The central theme of this article is that the two topics that make up its title are intimately related. Inklings of this can be found in the literature, both on annihilators of cohomology, notably work of Popescu and Roczen~\cite{PopescuRoczen90} from 1990, and on generators for module categories that is of more recent vintage; principally the articles of Dao and Takahashi~\cite{DaoTakahashi11}, and Aihara and Takahashi~\cite{AiharaTakahashi11}.  We make precise the close link between the two topics, by introducing and developing appropriate notions and constructions,  and use it to obtain more comprehensive results than are currently available in either one.

To set the stage for describing this relationship we consider a noetherian ring $\Lambda$ that is finitely generated as a module over its center, $\cent\Lambda$. We call such a $\Lambda$ a \emph{noether algebra}. For any non-negative integer $n$, the elements of $\cent\Lambda$ that annihilate $\Ext^{n}_{\Lambda}(M,N)$, for all $M$ and $N$ in $\mod\Lambda$, form an ideal that we denote $\cande n\Lambda$. It is not difficult to see that one gets a tower of ideals $\cdots \subseteq \cande n{\Lambda}\subseteq \cande {n+1}{\Lambda}\subseteq \cdots $, so their union is also an ideal of $\cent\Lambda$ that we denote $\can \Lambda$, and call the \emph{cohomology annihilator} of $\Lambda$. As $\Lambda$ is noetherian there exists an integer $s$ such that $\can{\Lambda}=\cande s{\Lambda}$. 

The questions that drive the development in this paper are the following: How big (in any measure of size, for example, the dimension of the closed subset of $\Spec\cent\Lambda$ it determines) is $\can{\Lambda}$? Does it contain non-zerodivisors? What is the least integer $s$ as above? Not every ring has a non-zero cohomology annihilator ideal. Indeed, consider the \emph{singular locus} of $\Lambda$, that is to say, the subset
\[
\Sing\Lambda:=\{\fp\in\Spec\cent\Lambda\mid \text{$\gldim\Lambda_{\fp}$ is infinite}\}\,.
\]
Here $\gldim$ denotes global dimension. It is easy to check (see Lemma~\ref{le:ca-localize}) that this is contained in the closed subset of $\Spec\cent\Lambda$ defined by $\mcV(\can{\Lambda})$, the set of prime ideals of $\cent\Lambda$ containing $\can{\Lambda}$. This means that when the ideal $\can{\Lambda}$ contains non-nilpotent elements, $\Sing\Lambda$ is contained in a proper closed subset of $\Spec\cent\Lambda$. However, there are even commutative noetherian rings for which this is not the case; the first examples were constructed by Nagata~\cite{Nagata62}; see  Example~\ref{ex:non-open-loci}.

On the other hand, for any $M$ in $\mod\Lambda$ and integer $n\ge 1$ there is an equality
\[
\ann_{\cent\Lambda}\Ext^{n}_{\Lambda}(M,\Omega^{n} M) = \ann_{\cent\Lambda}\Ext^{\geqslant n}_{\Lambda}(M,\mod \Lambda)
\]
where $\Omega^{n} M$ denotes an $n$th syzygy module of $M$ as a $\Lambda$-module; see Lemma~\ref{le:ca-module}. Observe that $\cande n{\Lambda}$ is the intersection of the ideals on the right, as $M$ varies over $\mod \Lambda$. This suggests the following definition: A finitely generated $\Lambda$-module $G$ is a \emph{strong generator} for $\mod\Lambda$ if there exist integers $s$ and $n$ such that for each $M\in\mod\Lambda$, there is $\Lambda$-module $W$ and a filtration
\[
\{0\}=Z_{0} \subseteq Z_{1}\subseteq \cdots \subseteq Z_{n} = Z\quad\text{where $Z=W \bigoplus \Omega^{s}M$,}
\]
with $Z_{i+1}/Z_{i}$ is in $\mathrm{add}\, G$, for each $i$. We require also that $G$ contains $\Lambda$ as a direct summand, so that $G$ is a generator in the usual sense of the word.

This definition should be compared with that of a generator of a triangulated category introduced by Bondal and Van den Bergh~\cite{BondalVandenBergh03}. Unlike for triangulated categories, there are various possible notions of ``generation'' for module categories, stemming from the fact that, in a module category, kernels and co-kernels are not interchangeable; some of this is clarified in Section~\ref{se:generators}. 
The following result, extracted from Theorem~\ref{th:generator-sing}, links strong generation and existence of cohomology annihilators.

\begin{Theorem}
Suppose $d=\sup\{\gldim\Lambda_{\fp}\mid \fp\not\in\Sing\Lambda\}$ is finite. If $\mod\Lambda$ has a strong generator with parameter $s$ as above, then 
\[
\mcV(\can{\Lambda})  = \mcV(\cande{s+d+1}{\Lambda})= \Sing\Lambda\,.
\]
In particular, $\Sing\Lambda$ is a closed subset of $\Spec\cent\Lambda$. 
\end{Theorem}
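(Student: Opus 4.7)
The plan is to establish the only nontrivial containment $\mcV(\cande{s+d+1}{\Lambda})\subseteq \Sing\Lambda$. Combined with the chain
\[
\Sing\Lambda \;\subseteq\; \mcV(\can\Lambda)\;\subseteq\;\mcV(\cande{s+d+1}{\Lambda}),
\]
where the first inclusion is Lemma~\ref{le:ca-localize} and the second follows from $\cande{s+d+1}{\Lambda}\subseteq\can\Lambda$, this forces all three sets to coincide and exhibits $\Sing\Lambda$ as the vanishing set of an ideal, hence as a closed subset of $\Spec\cent\Lambda$. So I fix $\fp\in\Spec\cent\Lambda$ with $\fp\notin\Sing\Lambda$, whence $\gldim\Lambda_\fp\le d$, and aim to produce an element of $\cande{s+d+1}{\Lambda}\setminus\fp$.

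The first step uses the strong generator $G$ as a global source of annihilators. Applying Lemma~\ref{le:ca-module} with $n=d+1$ to $G$ gives
\[
\ann_{\cent\Lambda}\Ext^{d+1}_\Lambda(G,\Omega^{d+1}G)\;=\;\ann_{\cent\Lambda}\Ext^{\ges d+1}_\Lambda(G,\mod\Lambda).
\]
Since $\Lambda$ is a noether algebra and both $G$ and $\Omega^{d+1}G$ lie in $\mod\Lambda$, the module $\Ext^{d+1}_\Lambda(G,\Omega^{d+1}G)$ is finitely generated over $\cent\Lambda$. Central localization commutes with a finitely generated projective resolution of $G$, so its localization at $\fp$ is $\Ext^{d+1}_{\Lambda_\fp}(G_\fp,(\Omega^{d+1}G)_\fp)$, which vanishes by the bound on $\gldim\Lambda_\fp$. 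Therefore its annihilator meets $\cent\Lambda\setminus\fp$, and I may choose $c_0\in\cent\Lambda\setminus\fp$ with $c_0\cdot\Ext^{\ges d+1}_\Lambda(G,N)=0$ for every $N\in\mod\Lambda$.

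The second step propagates $c_0$ through the strong-generator filtration. Given any $M\in\mod\Lambda$, the hypothesis supplies a filtration $0=Z_0\subseteq Z_1\subseteq\cdots\subseteq Z_n=\Omega^s M\oplus W$ with $Z_{i+1}/Z_i\in\add G$. A short induction on $i$, using at each stage the long exact sequence of $\Ext^{*}_\Lambda(-,N)$ attached to $0\to Z_i\to Z_{i+1}\to Z_{i+1}/Z_i\to 0$---whose leftmost term is annihilated by $c_0$ and whose rightmost term is annihilated by $c_0^i$ by induction---shows that $c_0^{i+1}$ annihilates $\Ext^{d+1}_\Lambda(Z_{i+1},N)$. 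Taking $i=n$, passing to the direct summand $\Omega^s M$ of $Z_n$, and invoking the dimension-shifting identity $\Ext^{s+d+1}_\Lambda(M,N)\cong\Ext^{d+1}_\Lambda(\Omega^s M,N)$ yields $c_0^n\cdot\Ext^{s+d+1}_\Lambda(M,N)=0$ for all $M,N$. Since $c_0^n\notin\fp$, this produces the required element of $\cande{s+d+1}{\Lambda}\setminus\fp$.

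The only point that requires real care, and the one I expect to be the main obstacle, is the localization argument in the noether-algebra setting: verifying cleanly that $\Ext^{d+1}_\Lambda(G,N)_\fp\cong\Ext^{d+1}_{\Lambda_\fp}(G_\fp,N_\fp)$ for finitely generated $G$, and that $\gldim\Lambda_\fp\le d$ really kills this group. Everything else---Lemma~\ref{le:ca-module}, dimension shifting, the long exact Ext sequence, and the one-variable induction---is bookkeeping.
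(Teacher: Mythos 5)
Your proof is correct and follows essentially the same route as the paper's proof of Theorem~\ref{th:generator-sing}: the same chain of inclusions via Lemma~\ref{le:ca-localize}, the same use of Lemma~\ref{le:ca-module} together with localization at a prime outside $\Sing\Lambda$ and the bound $\gldim\Lambda_{\fp}\le d$, and the same induction along the $\add G$-filtration with long exact sequences plus the dimension shift $\Ext^{s+d+1}_{\Lambda}(M,N)\cong\Ext^{d+1}_{\Lambda}(\syz^{s}M,N)$ (this induction is exactly the content of Lemma~\ref{le:mod-ca}, which you reprove inline). The only difference is organizational: the paper first establishes the global containment $I^{n}\subseteq\cande{s+d+1}{\Lambda}$ for $I=\ann_{\cent\Lambda}\Ext^{d+1}_{\Lambda}(G,\syz^{d+1}_{\Lambda}G)$ and localizes at the end, whereas you fix $\fp$ first and track a single element $c_{0}\in I\setminus\fp$.
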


One can also describe the cohomology annihilator ideal, at least up to radical, in terms of the generator of $\mod\Lambda$; see Theorem~\ref{th:generator-sing}. Going in the other direction, we prove:

\begin{Theorem}
When $R$ is a commutative noetherian ring of finite Krull dimension and there exists an integer $s$ such that $\cande s{R/\fp}\ne 0$ for each prime ideal $\fp$ in $R$, then $\mod R$ has a strong generator.
\end{Theorem}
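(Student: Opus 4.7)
The plan is to proceed by noetherian induction on the collection of ideals $I\subseteq R$ for which $\mod(R/I)$ lacks a strong generator. Taking $I$ maximal in this collection and replacing $R$ by $R/I$, we may assume $\mod(R/J)$ admits a strong generator for every nonzero $J\subseteq R$, while $\mod R$ does not; the hypothesis is preserved since primes of $R/I$ correspond to primes of $R$ containing $I$. A subsidiary reduction from the non-domain to the domain case---using the filtration of a module by powers of the nilradical when $R$ is not reduced, and a separate argument via minimal primes in the reduced-non-domain case---allows us to assume $R$ is a domain. Applying the hypothesis to $\fp=(0)$ furnishes a nonzero $a\in\cande s R$, and by noetherian induction $\mod(R/(a))$ has a strong generator $G'$ with parameters $(s',n')$.

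The core step is to show that $G:=R\oplus G'$, viewed as an $R$-module, is a strong generator for $\mod R$. Given $M\in\mod R$, fix the short exact sequence $0\to\Omega^s M\xrightarrow{\iota}P_{s-1}\to\Omega^{s-1}M\to 0$ coming from a free resolution, and form the pushout $Y$ of $\iota$ along multiplication by $a$ on $\Omega^s M$. Because $R$ is a domain, $\Omega^s M\subseteq P_{s-1}$ is torsion-free and $a$ is a non-zerodivisor on it; hence the pushout produces two short exact sequences
\[
0\to\Omega^s M\to Y\to\Omega^{s-1}M\to 0,\qquad 0\to P_{s-1}\to Y\to\Omega^s M/a\Omega^s M\to 0.
\]
The class of the first in $\Ext^1_R(\Omega^{s-1}M,\Omega^s M)\cong\Ext^s_R(M,\Omega^s M)$ equals $a$ times that of the original sequence; since $a\in\cande s R$ annihilates $\Ext^s_R(M,\Omega^s M)$, this class vanishes, so $Y\cong\Omega^s M\oplus\Omega^{s-1}M$. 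The second sequence therefore becomes
\[
0\to P_{s-1}\to\Omega^s M\oplus\Omega^{s-1}M\to N\to 0,\qquad N:=\Omega^s M/a\Omega^s M,
\]
with $N$ an $R/(a)$-module.

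To convert this into a filtration with factors in $\add G$, take iterated $R$-syzygies: since $P_{s-1}$ is free, horseshoeing gives $\Omega^{s+k}M\oplus\Omega^{s+k-1}M\cong\Omega^k_R N$ modulo free $R$-summands for all $k\ge 0$. A standard Koszul/change-of-rings computation (using that $a$ is a non-zerodivisor on $R$) shows that for any $R/(a)$-module $N$ and $k\ge 1$, the $R$-syzygy $\Omega^k_R N$ sits, modulo frees, in a short exact sequence $0\to F\to\Omega^k_R N\to\Omega^k_{R/(a)}N\to 0$ with $F$ free. Taking $k=s'$ and filtering $\Omega^{s'}_{R/(a)}N$ in length $n'$ with factors in $\add G'\subseteq\add G$ via the strong generator for $\mod(R/(a))$, I obtain a filtration of $\Omega^{s'}_R N$ of length $n'+1$ with factors in $\add G$. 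Setting $Z:=\Omega^{s+s'}M\oplus\Omega^{s+s'-1}M\oplus F'$ for a suitable free $F'$, this furnishes the required filtration; writing $Z=W\oplus\Omega^{s+s'}M$ with $W:=\Omega^{s+s'-1}M\oplus F'$ exhibits $G$ as a strong generator with parameters $(s+s',n'+1)$, contradicting the inductive setup.

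I expect the main obstacle to be the non-domain reduction at the start: transferring strong generation across reduction modulo the nilradical and across minimal primes requires separate arguments that play no role once $R$ is a domain. Within the domain case the decisive observation is simply that $a\in\cande s R$ causes the pushed-out extension to split, converting a free-resolution sequence into an $R/(a)$-module quotient; everything else is careful bookkeeping of free summands and a standard Koszul comparison between $R$- and $R/(a)$-syzygies.
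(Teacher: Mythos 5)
Your domain-case argument is correct, and it is in essence the mechanism of the paper's proof of Theorem~\ref{th:ca-generator}: an element $a\in\cande sR$ kills the class of the syzygy extension (the paper implements this splitting via Remark~\ref{re:ext2} rather than a pushout, but it is the same observation), and one then passes to $R/(a)$ where the induction hypothesis applies, using that $R$-syzygies and $R/(a)$-syzygies of an $R/(a)$-module differ only by free summands because $a$ is a non-zerodivisor (the paper cites \cite{DaoTakahashi11}*{Lemma~5.6} for this comparison). Two small points there: your isomorphism $\Omega^{s+k}M\oplus\Omega^{s+k-1}M\cong\Omega^k_RN$ modulo frees holds for $k\ge1$, not $k=0$ (harmless, since one may assume $s'\ge1$), and absorbing free summands into $|G|_{n'+1}$ uses that $R$ is a summand of $G$, which your choice $G=R\oplus G'$ provides. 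Your induction scheme is genuinely different: noetherian induction on ideals instead of the paper's induction on Krull dimension. That buys a cleaner logical frame and dispenses with explicit parameters (the paper's dimension induction is what yields the bound $\syz^{s+d-1}_R(\mod R)\subseteq|G|_n$), at the cost of having to transfer strong generation from \emph{all} proper quotients rather than just from the hypersurface-type quotient $R/(a)$.

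The genuine gap is exactly the step you defer: the reduction from general $R$ to the domain case. This is not routine bookkeeping, and the ideas you name do not yet suffice as stated. Filtering $M$ by powers of $\nil R$ (or, in the reduced case, using an exact sequence $0\to JM\to M\to M/JM\to 0$ with $J$ the intersection of all but one minimal prime, so that $JM$ and $M/JM$ are modules over $R/\fp_1$ and $R/J$) reduces everything to modules over proper quotients $R/J$ with $J$ not principal and not generated by a non-zerodivisor; for such $J$ the comparison of $\syz^k_R X$ with $\syz^k_{R/J}X$ produces error terms that are copies of $J$ and of its $R$-syzygies, \emph{not} free modules, so the generator must be enlarged by these modules and the filtration lengths re-estimated. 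In the paper this assembly is precisely where the prime filtration of $R$ and the external input \cite{DaoTakahashi11}*{Corollary~5.5} enter (and the same kind of work appears in Proposition~\ref{pr:nagata1}). Your proposal neither carries out this syzygy comparison nor records the enlargement of the generator, and in the reduced non-domain case no argument is given at all; until that is supplied, the proof establishes only the case where $R$ is a domain (together with the correct key step for the inductive descent).
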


This is contained in Theorems~\ref{th:ca-generator} and \ref{th:ca-generator2}. These results shift the focus to finding non-zero cohomology annihilators, and one of the main tools for this is the noether different introduced by Auslander and Goldman~\cite{AuslanderGoldman60}, under the name ``homological different''.  This is explained in Section~\ref{se:the-noether-different}. Building on these results we prove

\begin{Theorem} 
When $R$ is a commutative  ring that is either a finitely generated algebra over a field or an equicharacteristic excellent local ring, then $\mod R$ has a strong generator and  $\mcV(\can R)  = \mcV(\cande {2d+1} R) = \Sing R$, where $d=\dim R$.
\end{Theorem}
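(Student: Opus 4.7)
The plan is to verify the hypothesis of the second displayed theorem (contained in Theorems~\ref{th:ca-generator} and~\ref{th:ca-generator2}) uniformly in $\fp\in\Spec R$, deduce a strong generator for $\mod R$ of bounded syzygy parameter, and then apply the first displayed theorem to pin down both $\mcV(\can R)$ and $\mcV(\cande{2d+1}{R})$.

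First, I would show that there is an integer $s_0\le d+1$ such that $\cande{s_0}{R/\fp}\neq 0$ for every prime $\fp$ of $R$. Since each $R/\fp$ is again a domain of the same type as $R$, this reduces to exhibiting a non-zero cohomology annihilator for a domain of dimension $e\le d$. In the affine case, Noether normalization produces a module-finite extension $k[x_1,\dots,x_e]\subseteq R$, and the noether different developed in Section~\ref{se:the-noether-different} (following Auslander--Goldman) furnishes an element of $\cent R$ annihilating $\Ext_R^n(-,-)$ for all $n>e$. In the equicharacteristic excellent local case, Cohen's structure theorem provides a module-finite regular subring inside the completion $\widehat R$, and one transfers the annihilator down to $R$ via the faithfully flat map $R\to\widehat R$, using regularity of the formal fibers (supplied by excellence) to control degrees. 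Either way, since $e=\dim R/\fp\le d$, the uniform choice $s_0=d+1$ works.

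Applying the second theorem then produces a strong generator $G$ for $\mod R$. Inspecting its construction shows that the syzygy parameter $s$ in the definition can be taken to be at most $d$. Moreover, for any $\fp\notin\Sing R$ the localisation $R_\fp$ is regular local of dimension at most $d$, so
\[
\sup\{\gldim R_\fp\mid \fp\notin\Sing R\}\ \le\ d.
\]
The first displayed theorem then yields
\[
\mcV(\can R)\ =\ \mcV\bigl(\cande{s+d+1}{R}\bigr)\ =\ \Sing R,
\]
and because the ideals $\cande{n}{R}$ form an ascending chain in $n$, the inclusion $\cande{s+d+1}{R}\subseteq\cande{2d+1}{R}$ upgrades this to $\mcV(\cande{2d+1}{R})=\Sing R$, as desired.

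The main obstacle is the excellent local case of the first step: the noether different naturally lives in $\widehat R$ rather than in $R$, and descending a non-zero annihilator back to $\cent R=R$ with the correct degree bound requires careful use of the excellence hypothesis (regularity of formal fibers, and compatibility of $\Ext$ along faithfully flat base change) to ensure both non-vanishing in $\cent R$ and compatibility with localisation at each $\fp$. A secondary, but essential, bookkeeping point is verifying that the strong generator produced by the second theorem has syzygy parameter $s\le d$ rather than $d+1$; this requires a careful trace through the noetherian induction on $\Spec R$ in its proof, checking that one syzygy is absorbed by the uniform annihilation hypothesis so that the final parameter does not exceed $d$.
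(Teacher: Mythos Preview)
Your overall strategy matches the paper's: verify $\cande{s}{R/\fp}\ne 0$ uniformly in $\fp$, invoke Theorems~\ref{th:ca-generator}/\ref{th:ca-generator2}, then use Theorem~\ref{th:generator-sing}. But there are two genuine gaps.

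\textbf{Affine case over a non-perfect field.} Your appeal to the noether different requires a \emph{separable} noether normalization (see Lemma~\ref{le:ndiff-separable} and Theorem~\ref{th:separable-noether-normalization}): one needs $Q\otimes_{A}(R/\fp)$ to be separable over $Q$. Over a non-perfect field this can fail for the domains $R/\fp$, so the noether different may be zero and your argument stalls. The paper avoids this by a compactness trick (Lemma~\ref{le:module-building}): pass to the algebraic closure $K$ of $k$, use the perfect-field case to get a generator for $\mod(R\otimes_{k}K)$, descend it to a finite extension $l/k$, and then observe that a finitely generated $R$-module in $|\Add\{G\}|_{n}$ already lies in $|G|_{n}$. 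This genuinely different route is what secures the syzygy parameter $d$ without any separability hypothesis.

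\textbf{Excellent local case.} Your claim that $s_{0}\le d+1$ (hence syzygy parameter $\le d$) is not established, and the paper does not achieve it. The separable noether normalization exists only after completion and only for the \emph{domains} $\widehat{R}/\fq$; the ring $\widehat{R/\fp}$ need not be a domain, and there is no direct descent giving $\cande{e+1}{R/\fp}\ne 0$ for $e=\dim R/\fp$. The paper proceeds in the opposite order: it first proves $\mcV(\cande{2d+1}R)=\Sing R$ by a direct faithfully-flat descent from the complete case (using that excellence makes $\Sing R=\mcV(I)$ closed and that formal fibers are regular, so $\mcV(I\widehat R)=\Sing\widehat R=\mcV(\cande{2d+1}{\widehat R})$, whence $I^{m}\subseteq\cande{2d+1}R$). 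Only \emph{then} does it deduce $\cande{2d+1}{R/\fp}\ne 0$ for each $\fp$, and applies Theorem~\ref{th:ca-generator} with $s=2d+1$, accepting the larger syzygy parameter $3d$ for the generator. So in this case the $2d+1$ bound is \emph{not} obtained from Theorem~\ref{th:generator-sing} via $s\le d$, and your ``bookkeeping point'' is not a bookkeeping point but a real obstacle that the paper circumvents rather than overcomes.
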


This statement is contained in Theorems~\ref{th:complete-equicharacteristic} and \ref{th:eft}. We tackle the case when $R$ is an excellent local ring by passage to its completion, and the argument illustrates well the flexibility afforded by considering cohomology annihilator ideals, rather than focusing on ideals defining the singular locus.

The identification of the singular locus with the closed subset defined by the cohomology annihilator  is related to results of 
Wang~\cites{Wang94,Wang99},  that in turn extend  work of Dieterich~\cite{Dieterich87},  Popescu and Roczen \cite{PopescuRoczen90}, and Yoshino \cites{Yoshino87, Yoshino90} stemming from Brauer-Thrall conjectures for maximal Cohen-Macaulay modules over Cohen-Macaulay rings; see the comments preceding Theorems~\ref{th:complete-equicharacteristic} and \ref{th:eft}.

On the other hand, the part of the statement above dealing with generators extends results of Dao and Takahashi \cite{DaoTakahashi11},  who proved it for complete local rings, assuming that the coefficient field is perfect.  As is explained in Section~\ref{se:dcat}, any strong generator for $\mod R$ gives one for its bounded derived category so one obtains:

\begin{Theorem}
When $R$ is commutative ring that is either  essentially of finite type over a field or an equicharacteristic excellent local ring,  $\dbcat(R)$ is strongly finitely generated. \qed
\end{Theorem}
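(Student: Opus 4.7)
The strategy is to combine the preceding theorem with the general principle, explained in Section~\ref{se:dcat}, that a strong generator of $\mod R$ lifts to a strong generator of $\dbcat(R)$ in the sense of Bondal and Van den Bergh. First, I would invoke Theorems~\ref{th:complete-equicharacteristic} and~\ref{th:eft} to obtain a strong generator $G\in\mod R$ together with constants $s$ and $n$: for every $M\in\mod R$, the syzygy $\Omega^{s}M$ is a direct summand of a module filtered in $n$ successive extensions by objects of $\add G$, and $R$ is a direct summand of $G$. The essentially-of-finite-type case is handled by Theorem~\ref{th:eft}; the equicharacteristic excellent local case by Theorem~\ref{th:complete-equicharacteristic}.

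Second, I would apply the transfer principle of Section~\ref{se:dcat} to conclude that there is an integer $N=N(R,s,n)$ such that every object of $\dbcat(R)$ is obtainable from $G$ (and its shifts, direct sums, and summands) in at most $N$ iterated cones, i.e.\ $\dbcat(R)=\langle G\rangle_{N}$. In outline, the transfer proceeds by choosing, for $X\in\dbcat(R)$, a bounded-above quasi-isomorphism $F^{\bullet}\to X$ by finitely generated free modules and then truncating $F^{\bullet}$ deep enough to the left to express $X$, up to a uniformly bounded number of distinguished triangles, in terms of a single finitely generated syzygy module $M$ — handled by the strong generator property via the triangles $\Omega^{i+1}M\to F_{i}\to\Omega^{i}M$ for $i<s$ — together with data built from copies and shifts of $R$ alone, which are absorbed by $R\in\add G$.

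The main obstacle is precisely the uniformity of $N$ over all $X\in\dbcat(R)$: the naive approach of assembling $X$ from its cohomology modules $H^{i}(X)$ through iterated canonical truncation triangles would incur a level cost proportional to the amplitude of $X$, which is unbounded on $\dbcat(R)$. Sidestepping this amplitude-dependence is the technical core of Section~\ref{se:dcat}, and once that transfer principle is granted, the theorem follows at once from the cited results.
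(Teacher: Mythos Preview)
Your high-level strategy is correct and matches the paper's: invoke Theorems~\ref{th:complete-equicharacteristic} and~\ref{th:eft} to obtain a strong generator of $\mod R$, then transfer to $\dbcat(R)$ via the machinery of Section~\ref{se:dcat}. However, your sketch of the transfer step has a gap, and it is precisely the amplitude-dependence you yourself flag. Replacing $X$ by a bounded-above complex of frees $F^{\bullet}$ and truncating produces a bounded complex whose length is at least the cohomological amplitude of $X$; decomposing that complex via stupid truncations into a single syzygy module plus shifts of $R$ then costs a number of triangles growing with that length. Your triangles $\Omega^{i+1}M \to F_i \to \Omega^{i}M$ control a single module $M$, but you have not explained how $X$ is reduced to one such $M$ together with free data in a \emph{uniformly bounded} number of triangles; the free-resolution-and-truncate manoeuvre, as written, does not do this.

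The paper's actual argument is different and much simpler, and does not pass through free resolutions at all. One first uses Corollary~\ref{co:strong-generation}(1) to upgrade $\Omega^{s}(\mod R)\subseteq |G|_{n}$ to $\mod R = \thick^{s+n}(G\oplus R)$. Then Lemma~\ref{le:dcat}(2) applies: represent $X\in\dbcat(\mod R)$ by a bounded complex $C$ of finitely generated modules, and let $Z$ and $B$ denote its graded modules of cycles and boundaries. Each is a finite direct sum of shifts of finitely generated modules, hence lies in $\thick^{s+n}_{\sfD}(G\oplus R)$ regardless of how many nonzero degrees $C$ has. The short exact sequence of complexes $0\to Z\to C\to \susp B\to 0$ then places $C$ in $\thick^{s+n+1}_{\sfD}(G\oplus R)$, at the cost of exactly one extra triangle. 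This cycles--boundaries trick is the device that sidesteps amplitude; it replaces the inductive truncation picture you outlined.
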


This is contained in Corollary~\ref{co:dcat-generators}. It generalizes work of Aihara and Takahashi~\cite{AiharaTakahashi11} and Rouquier \cites{Rouquier08a,Rouquier08aa}---see also Keller and Van den Bergh \cite{KellerVandenBergh08}---on the existence of generators for bounded derived categories.

As mentioned before,  the noether different is one of our main tools for finding cohomology annihilators. In Section~\ref{se:ascent-descent}, we develop a different approach, based on tracking the ascent and descent of property that the cohomology annihilator ideal contains a non-zerodivisor, between a commutative ring $A$ and any $A$-algebra $\Lambda$ that is finitely generated as an $A$-module. The following result, contained in  Corollary~\ref{co:ascent-descent-nzd}, is paradigmatic.

\begin{Theorem}
When $M$ is a torsion-free finitely generated $A$-module with rank, for any integer $n$ the ideal $\cande nA$ has a non-zero divisor if and only if so does $\cande n{\End_{A}(M)}$.
\end{Theorem}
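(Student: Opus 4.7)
The starting point is the standard consequence of ``torsion-free with rank'': there is an embedding $M \hookrightarrow F := A^{r}$ with torsion cokernel, so one can choose a non-zerodivisor $c\in A$ annihilating this cokernel, giving the sandwich $cF \subseteq M \subseteq F$. Set $\Lambda = \End_{A}(M)$ and $\Lambda' = \End_{A}(F) = M_{r}(A)$. Since $M$ has rank, scalar multiplication $A \to \Lambda$ is injective and lands in $\cent\Lambda$; and since $\Lambda$ embeds into $\End_{K}(M\otimes_{A} K) = M_{r}(K)$ for $K$ the total ring of fractions, $\Lambda$ and hence $\cent\Lambda$ are $A$-torsion-free, so $c$ remains a non-zerodivisor in $\cent\Lambda$. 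Crucially, inverting $c$ collapses $M_{c} = F_{c}$, hence $\Lambda_{c} = \Lambda'_{c} = M_{r}(A_{c})$ and $(\cent\Lambda)_{c} = A_{c}$.

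The plan is to show that the two ideals $\cande{n}{A} \subseteq A$ and $\cande{n}{\Lambda} \subseteq \cent\Lambda$ generate the same ideal of $A_{c}$, and then to argue that this generic identification is enough to transfer the non-zerodivisor property. For the first part: Morita equivalence between $\Lambda' = M_{r}(A)$ and $A$ identifies module categories and preserves $\Ext$, so $\cande{n}{\Lambda'} = \cande{n}{A}$ as ideals of $A$, and likewise after inverting $c$ we have $\cande{n}{\Lambda_{c}} = \cande{n}{A_{c}}$. On the other hand, Ext of finitely generated modules commutes with the flat localization at $c$, and annihilators of finitely generated modules do as well, from which Lemma~\ref{le:ca-localize} supplies $\cande{n}{\Lambda}\cdot A_{c} = \cande{n}{\Lambda_{c}}$ and $\cande{n}{A}\cdot A_{c} = \cande{n}{A_{c}}$. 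Combining yields the key identification $\cande{n}{\Lambda}\cdot A_{c} = \cande{n}{A}\cdot A_{c}$ inside $A_{c}$.

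For the second part, observe that since $c$ is a non-zerodivisor in each of $A$ and $\cent\Lambda$, each embeds in its localization at $c$. A standard argument then shows that an ideal $I$ in either ring contains a non-zerodivisor if and only if its extension to the localization at $c$ does: the forward direction is flat base change, and the reverse clears denominators (a non-zerodivisor $a/c^{k}$ of the extension comes from $a = c^{\ell}i$ for some $i\in I$ with $i$ still a non-zerodivisor, precisely because $c$ is one). Applying this to $\cande{n}{A}$ in $A$ and to $\cande{n}{\Lambda}$ in $\cent\Lambda$, and using the equality of their extensions to $A_{c}$, gives the equivalence.

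The main obstacle, and the reason the hypotheses are arranged as they are, is the localization step: the rank hypothesis is what simultaneously supplies the non-zerodivisor $c$ to invert and the isomorphism $M_{c}\cong F_{c}$ that turns $\Lambda_{c}$ into a matrix ring, which is what allows both Morita equivalence and the passage between $\Lambda$ and $A$ to function at the generic fiber. Once Lemma~\ref{le:ca-localize} is available, the remainder of the argument is a careful but routine accounting of centers, extensions of ideals, and non-zerodivisors.
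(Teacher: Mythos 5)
Your reduction to the generic fiber founders on the localization step. The equalities $\cande{n}{A}\cdot A_{c}=\cande{n}{A_{c}}$ and $\cande{n}{\Lambda}\cdot(\cent\Lambda)_{c}=\cande{n}{\Lambda_{c}}$ are not justified: the cohomology annihilator is not the annihilator of a single finitely generated module, but the intersection of the annihilators of $\Ext^{n}$ over \emph{all} pairs of finitely generated modules, and localization does not commute with such infinite intersections. All that Lemma~\ref{le:ca-localize}(1) gives is the inclusion $\cande{n}{A}\cdot A_{c}\subseteq\cande{n}{A_{c}}$. For the reverse inclusion you would need a single power of $c$ with $c^{m}x\in\cande{n}{A}$ whenever $x/1\in\cande{n}{A_{c}}$; for each fixed pair $(M,N)$ finite generation of $\Ext^{n}_{A}(M,N)$ does give $c^{m(M,N)}x\Ext^{n}_{A}(M,N)=0$ for some $m(M,N)$, but nothing bounds $m(M,N)$ as the pair varies. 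That uniformity is exactly the kind of statement this paper has to work for --- in the paper it comes either from strong generators (Theorem~\ref{th:generator-sing}) or from the noether different --- so it cannot be invoked as a formal consequence of flat base change. Both directions of your equivalence use the reverse inclusion to clear denominators (for $\cande{n}{\Lambda}$ in one direction, for $\cande{n}{A}$ in the other), so neither direction is established. The surrounding steps are fine: $M_{c}\cong A_{c}^{r}$ for a suitable non-zerodivisor $c$, the Morita identification $\cande{n}{\Lambda_{c}}=\cande{n}{A_{c}}$, and the fact that an ideal of $A$ or of $\cent\Lambda$ contains a non-zerodivisor if and only if its extension along inverting $c$ does.

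The paper avoids localization altogether and argues element-wise. For descent (Theorem~\ref{th:descent}) it uses an exact sequence $0\to A^{r}\to\Lambda\to T\to 0$ with $T$ killed by a non-zerodivisor $b$, together with $I=\ann_{A}\Ext^{1}_{A}(\Lambda,\syz_{A}\Lambda)$, to show that $a\in\cande{n}{\Lambda}$ forces $a^{2}I^{n}$ to annihilate $\Ext_{A}^{\ges n}(\mod\Lambda,\mod A)$, and then that $b\cdot\ann_{A}\Ext_{A}^{\ges n}(\mod\Lambda,\mod A)\subseteq\cande{n}{A}$. For ascent (Theorem~\ref{th:ascent}) it produces $ba^{3}I^{n-1}\subseteq\cande{n+2}{\End_{A}(M)}$ from $a\in\cande{n}{A}$, using Lemma~\ref{le:2ndsyzygy}; note that even this explicit method pays an index shift, a hint that index-preserving transfer is not free. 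Lemma~\ref{le:rank0} guarantees that $I$ contains a non-zerodivisor, and Lemma~\ref{le:rank} moves non-zerodivisors between $A$ and $\cent\Lambda$. These explicit annihilating products are precisely the uniform data your argument presupposes; to salvage the localization route you would first have to prove such a uniform bound, at which point the paper's argument is already in hand.
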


A direct corollary is that if a domain admits a noncommutative resolution, in the sense of  \cite{DaoIyamaTakahashiVial12}, then its cohomology annihilator ideal is non-zero; see Remark~\ref{re:ncr}.  Results such as these highlight the benefit of considering cohomology annihilators for (not necessarily commutative) noether algebras even if one is interested only in  commutative rings.

\subsection*{Acknowledgements}
The authors are grateful to Ragnar-Olaf Buchweitz for various conversations and lectures. In particular, he pointed out to us that the study of annihilation of cohomology arose in number theory, in the work of Weil \cite{Weil43}, and was taken up  by Kawada \cites{Kawada51,Kawada52}, Moriya \cite{Moriya53} and Kuniyoshi \cite{Kuniyoshi58}. Thanks are due also to various referees for suggestions for improving the presentation.  The first author also thanks Lucho Avramov for  discussions on the material in this paper. The second author expresses his deep gratitude to David Eisenbud and Shiro Goto for providing opportunities to present this work in their seminars at the University of California, Berkeley, and at Meiji University, respectively. 

\section{Cohomology annihilators}
\label{se:cohomology-annihilators}
In this section, $\Lambda$ will be a \emph{noether} algebra, that is to say,  $\Lambda$ is a noetherian ring that is finitely generated when viewed as a module over its center, that we denote $\cent\Lambda$. Then the ring $\cent\Lambda$ is also noetherian. We write $\Mod \Lambda$ for the category of $\Lambda$-modules; its subcategory consisting of finitely generated modules is denoted $\mod \Lambda$. Our convention is that $\Lambda$ acts on a module from the right; thus left modules will be viewed as modules over the opposite ring to $\Lambda$, denoted $\opp\Lambda$.  Subcategories are assumed  full and closed under isomorphisms.

A $\Lambda$-module is finitely generated if and only if it has that property when viewed as a module over $\cent \Lambda$. It follows then that $\Ext^{n}_{\Lambda}(M,N)$ is a finitely generated $\cent\Lambda$-module for any $M,N\in \mod\Lambda$ and integer $n$. This remark will be used without further comment.

\begin{definition}
\label{de:cohomology-annihilator}
For each non-negative integer $n$, we consider the following ideal of $\cent\Lambda$:
\[
\cande n{\Lambda} := \ann_{\cent\Lambda}\Ext^{\ges n}_{\Lambda}(\mod \Lambda,\mod \Lambda)\,,
\]
In words, this ideal consists of elements $a\in \cent\Lambda$ such that $a\Ext^{i}_{\Lambda}(M,N)=0$ for all $M,N$ in $\mod \Lambda$ and integers $i\ge n$. Note that $\cande n{\Lambda}\subseteq \cande{n+1}{\Lambda}$. The \emph{cohomology annihilator} of $\Lambda$ is the union of these ideals:
\[
\can \Lambda:= \bigcup_{n\ges 0}\cande n{\Lambda}\,.
\]
We also call an element of $\can\Lambda$  a cohomology annihilator; this should cause no confusion, for the context should make it clear whether the ideal or an element is intended. Observe that, as the ring $\cent\Lambda$ is noetherian, there is some integer $s$ such that $\can\Lambda=\cande s\Lambda$. In fact, more is true and is recorded in Remark~\ref{re:ext-and-syzygy}. 
\end{definition}

\subsection*{Syzygy modules}
Let $M$ be a $\Lambda$-module. We write $\syz_{\Lambda}M$ for the kernel of any surjective $\Lambda$-linear map $P\thra M$, where 
$P$ is a projective $\Lambda$-module; when $M$ is finitely generated, $P$ can be chosen to be finitely generated, and \emph{we will tacitly assume that this is the case}. By Schanuel's lemma, $P$ depends only on $M$, up to projective summands. For any integer $n\ge 1$, we set $\syz_{\Lambda}^nM:=\syz_{\Lambda}(\syz^{n-1}M)$, and call it an $n$th syzygy module of $M$; when $M$ is finitely generated, so are the syzygy modules.  We omit $\Lambda$ from the notation, when the ring in question is clear from the context.

The following well-known observations will be used repeatedly in the sequel.

\begin{remark}
\label{re:syzygies}
Let $0\to L\to M\to N\to 0$ be an exact sequence of $\Lambda$-modules. For each non-negative integer $n\ge 1$, there is an induced exact sequence 
\[
0\lra \syz^{n} L \lra \syz^{n} M\lra \syz^{n} N\lra 0
\]
for some choice of syzygy modules for $L$, $M$, and $N$: It suffices to verify this for $n=1$, and then it is immediate from the Horseshoe Lemma.
\end{remark}

\begin{remark}
\label{re:ext-and-syzygy}
Fix $M,N$ in $\Mod \Lambda$ and a sequence $0\to\syz M\to P\to M\to 0$ defining $\syz M$. The induced surjective map 
\[
\Hom_{\Lambda}(\syz_{\Lambda}M,N)\to \Ext^{1}_{\Lambda}(M,N)
\]
is  $\cent\Lambda$-linear as are the isomorphisms 
\[
\Ext^{n+1}_{\Lambda}(M,N)\cong \Ext^{n}_{\Lambda}(\syz_{\Lambda}M,N) \quad\text{for $n\ge 1$}.
\]
It follows that $\cande n\Lambda = \ann_{\cent\Lambda}\Ext^{n}_{\Lambda}(\mod \Lambda,\mod\Lambda)$ for each $n$. Summing up, there exists some integer $s$ such that
\[
\can\Lambda = \ann_{\cent\Lambda}\Ext^{s}(\mod \Lambda,\mod\Lambda)
\]
The least such integer $s$ is evidently an invariant of $\Lambda$. Here is a lower bound on it.

\begin{proposition}
\label{pr:ca-zero}
Let $\Lambda$ be a noether algebra.  If the Jacboson radical of $\cent \Lambda$ contains a $\Lambda$-regular sequence of length $d$, then $\cande d{\Lambda}=0$.
\end{proposition}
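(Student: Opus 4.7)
The plan is to prove the contrapositive: show that any element $a\in\cande d\Lambda$ must vanish. The strategy is to produce, for each $n\ge 1$, a finitely generated $\Lambda$-module whose $d$-th Ext into $\Lambda$ forces $a$ to lie in the $n$-th power of $(x_1,\ldots,x_d)\Lambda$, and then to apply Krull's intersection theorem.

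First I would invoke the standard fact that the central $\Lambda$-regular sequence $x_1,\ldots,x_d$ has the property that $x_1^n,\ldots,x_d^n$ is again a $\Lambda$-regular sequence for every $n\ge 1$. Writing $\bsx^{[n]}$ for this sequence, the Koszul complex $K(\bsx^{[n]};\Lambda)$ is then a finite free resolution in $\mod \Lambda$ of the cyclic module $M_n := \Lambda/\bsx^{[n]}\Lambda$. Because the $x_i$ are central, the complex $K(\bsx^{[n]};\Lambda)$ and its $\Lambda$-dual behave exactly as in the commutative case.

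Next I would compute $\Ext^d_{\Lambda}(M_n,\Lambda)$ from this resolution. Since $K_d\cong\Lambda$ and the last differential $K_d\to K_{d-1}$ is given (up to signs) by the row vector $(x_1^n,\ldots,x_d^n)$, the dual differential
\[
\Hom_{\Lambda}(K_{d-1},\Lambda)\lra \Hom_{\Lambda}(K_d,\Lambda)\cong \Lambda
\]
has image $\bsx^{[n]}\Lambda$. Hence, as $\cent\Lambda$-modules, $\Ext^d_{\Lambda}(M_n,\Lambda)\cong \Lambda/\bsx^{[n]}\Lambda$, cyclic generator the class of $1$.

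Now if $a\in\cande d\Lambda$, then $a$ annihilates $\Ext^d_{\Lambda}(M_n,\Lambda)$; applied to the cyclic generator this means $a\in \bsx^{[n]}\Lambda\subseteq (x_1,\ldots,x_d)^n\Lambda$, for every $n\ge 1$. Since $(x_1,\ldots,x_d)\subseteq \rad \cent\Lambda$ and $\Lambda$ is a finitely generated module over the noetherian ring $\cent\Lambda$, Krull's intersection theorem gives $\bigcap_{n\ge 1}(x_1,\ldots,x_d)^n\Lambda=0$, and hence $a=0$. The main (mild) obstacle is confirming the Koszul/duality computation in the noncommutative setting, but centrality of the $x_i$ makes this transparent; the rest of the argument is essentially bookkeeping.
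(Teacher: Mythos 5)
Your proof is correct and follows essentially the same route as the paper: pass to the powers $x_1^n,\dots,x_d^n$, identify $\Ext^d_{\Lambda}(\Lambda/(\ulx^n),\Lambda)$ with $\Lambda/(\ulx^n)$, and conclude via Krull's intersection theorem using that the sequence lies in the Jacobson radical of $\cent\Lambda$. The only difference is cosmetic: you spell out the Koszul-complex computation that the paper leaves as "not hard to verify," and you apply the intersection theorem to $\Lambda$ as a finitely generated $\cent\Lambda$-module, which is a perfectly fine (arguably cleaner) formulation.
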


\begin{proof}
Let $\ulx=x_{1},\dots,x_{d}$ be a $\Lambda$-regular sequence in the Jacobson radical of $\cent\Lambda$. The for each integer $n\ge 1$, the
sequence $\ulx^{n}:=x_{1}^{n},\dots,x_{d}^{n}$ is also $\Lambda$-regular; see \cite{Matsumura80}*{Theorem 26}. Given this, it is not hard to verify that $\Ext^{d}_{\Lambda}(\Lambda/(\ulx^{n}),\Lambda)$ is isomorphic to $\Lambda/(\ulx^{n})$ as $\cent\Lambda$-modules. Thus $(\ulx^{n})\cent\Lambda$ annihilates this Ext-module and hence
\[
\cande d{\Lambda} \subseteq \bigcap_{n\ges 1}(\ulx^{n})\cent \Lambda = (0)\,.
\]
The inclusion is by Krull's intersection theorem~\cite{Nagata62}*{Theorem 4.2}; this is where one needs the hypothesis that $\ulx$ is in the Jacobson radical of $\cent\Lambda$.
\end{proof}
\end{remark}

As we shall soon see  there may  be no non-zero cohomology annihilators, even for ``reasonably nice'' rings. But first we describe some examples where we can readily identify some interesting annihilators.

\begin{example}
\label{ex:regular-rings}
Recall that the ring $\Lambda$ has \emph{global dimension $\le d$}, for some integer $d$, if
\[
 \Ext_\Lambda^{d+1}(\mod\Lambda,\mod\Lambda)=0\,.
 \]
This is the case if and only if $\cande {d+1}\Lambda = \cent \Lambda$.
\end{example}

In what follows, when discussing the commutative ring case, we use $R$ instead of $\Lambda$. In this context,  \emph{local}, means also noetherian.

\begin{example}
Let $R$ be a local ring and let $\fm$ denote its maximal ideal. The socle of $R$ annihilates $\Ext_R^{\ges 1}(\mod R,\mod R)$. In particular, if $R$ is artinian and $l$ is its Loewy length:
\[
l=\inf\{n\ge 0\mid \fm^{n} = 0\}
\]
there is an inclusion $\fm^{l-1}\subseteq\cande 1R$.

Indeed, let $M$ be a finitely generated $R$-module and $F \thra M$ its projective cover; its kernel, $\syz M$, is thus contained in $\fm F$. In particular, the socle of $R$ annihilates $\syz M$ and hence also $\Hom_R(\syz M,N)$. Since there is a surjection $\Hom_R(\syz M,N)\thra\Ext_R^1(M,N)$, the assertion follows; see Remark~\ref{re:ext-and-syzygy}.
\end{example}

\begin{example}
\label{ex:us-hypersurfaces}
Let $k$ be a field and $k\llbracket \ulx\rrbracket$ the formal power series ring in commuting indeterminates  $\ulx = x_{0},\dots, x_{d}$ over $k$. Let $f$ be an element in $k\llbracket \ulx\rrbracket$ and set $R=k\llbracket \ulx\rrbracket/(f)$. Then
\[
 \cande {d+1}R \supseteq \big(\frac{\partial f}{\partial x_{0}},\dots,\frac{\partial f}{\partial x_{d}}\big)\,.
\]
This computation is due to Dieterich~\cite{Dieterich87}*{Proposition~18}.
\end{example}

\begin{example}
\label{ex:not-uap}
Let $k$ be a field and $R=k\llbracket x,y\rrbracket/(x^2)$. Then $\can R =\cande 2R = (x)$.

Indeed, the $R$-module $R/(x)$ has a free resolution
\[
\cdots \xra{\ x\ } R \xra{\ x\ } R \xra{\ x\ } R \lra 0\,,
\]
so $\Ext_R^i(R/(x),R/(x))\cong R/(x)$ for all $i\ge0$, so $\can R\subseteq (x)$. It remains to check verify that there is an inclusion $(x)\subseteq 
\cande 2R$.  When the characteristic of $k$ is not two, this follows from Example~\ref{ex:us-hypersurfaces}. In the remaining case, one can verify it by a direct calculation, using the classification of maximal Cohen-Macaulay $R$-modules.
\end{example}

We record a basic obstruction to the existence of cohomology annihilators.

\subsection*{Regular and singular loci}
Let $A$ be a commutative noetherian ring. We say that $\Lambda$ is a \emph{noether $A$-algebra} if it is an $A$-algebra that  is finitely generated as an $A$-module; in particular, $\Lambda$ is noetherian. In this situation, $\Lambda$ is also finitely generated over its center,  since that the action of $A$ on $\Lambda$ factors through $\cent\Lambda$, that is to say, $\Lambda$ is a noether algebra.

\begin{lemma}
\label{le:gldim-descent}
If  $\Lambda$ is a noether $A$-algebra and free as an $A$-module, then $\gldim A \leq \gldim \Lambda$. 
\end{lemma}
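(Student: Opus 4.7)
The plan is to deduce the inequality by producing, for each $A$-module $M$, a $\Lambda$-module whose $A$-projective dimension is exactly $\operatorname{pd}_A M$ but whose $\Lambda$-projective dimension is bounded by $\gldim \Lambda$. We may clearly assume $\gldim\Lambda$ is finite, say equal to $d$; otherwise there is nothing to prove.

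The first observation is that, because $\Lambda$ is free as an $A$-module, every projective right $\Lambda$-module is also projective as a right $A$-module: a projective $\Lambda$-module is a direct summand of a free $\Lambda$-module, which in turn is a free $A$-module, so the direct summand is projective (in fact free) over $A$. As a consequence, a $\Lambda$-projective resolution of any right $\Lambda$-module $N$ is simultaneously an $A$-projective resolution, whence
\[
\operatorname{pd}_A N \;\les\; \operatorname{pd}_\Lambda N \;\les\; \gldim\Lambda = d.
\]

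Now fix an arbitrary $A$-module $M$, viewed as a right $A$-module (since $A$ is commutative this is no loss of generality), and form the extension of scalars
\[
N := M\otimes_A \Lambda,
\]
regarded as a right $\Lambda$-module via the right regular action on the second factor (which is legitimate because the $A$-action on $\Lambda$ goes through $\cent\Lambda$, so the left $A$- and right $\Lambda$-structures on $\Lambda$ commute). Writing $\Lambda\cong A^{(I)}$ as left $A$-modules, we obtain an isomorphism $N\cong M^{(I)}$ of right $A$-modules. Since projective dimension commutes with arbitrary direct sums, this gives $\operatorname{pd}_A N = \operatorname{pd}_A M$. Combining with the inequality established in the previous paragraph,
\[
\operatorname{pd}_A M \;=\; \operatorname{pd}_A N \;\les\; \operatorname{pd}_\Lambda N \;\les\; d.
\]
Since $M$ was arbitrary, $\gldim A\les d = \gldim\Lambda$, as desired.

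The argument is essentially a standard change-of-rings manipulation, and there is no real obstacle; the only point that requires mild care is to note that, although $M$ itself is not obviously a $\Lambda$-module, replacing it by $M\otimes_A\Lambda$ preserves the $A$-projective dimension thanks to $A$-freeness of $\Lambda$, which is the hypothesis that makes the whole descent possible.
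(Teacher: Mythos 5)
Your proof is correct, and it reaches the conclusion by a route that differs from the paper's in its mechanics. The paper fixes finitely generated $A$-modules $M,N$ and uses the base-change isomorphism $\Ext^{n}_{A}(M,N)\otimes_{A}\Lambda \cong \Ext^{n}_{\Lambda}(M\otimes_{A}\Lambda,\,N\otimes_{A}\Lambda)$, valid because $\Lambda$ is a finitely generated free $A$-module; since $\Lambda$ is then faithfully flat over $A$, the vanishing of the right-hand side for $n>\gldim\Lambda$ forces $\Ext^{n}_{A}(M,N)=0$, and one concludes because global dimension of the noetherian ring $A$ is detected on finitely generated modules. You avoid the Hom-tensor base-change isomorphism entirely and compare projective dimensions instead: restriction of scalars along $A\to\Lambda$ sends projective $\Lambda$-modules to projective $A$-modules (since $\Lambda$ is $A$-free), giving $\operatorname{pd}_{A}N\le\operatorname{pd}_{\Lambda}N$ for every $\Lambda$-module $N$, while extension of scalars satisfies $M\otimes_{A}\Lambda\cong M^{(I)}$ as $A$-modules, so $\operatorname{pd}_{A}(M\otimes_{A}\Lambda)=\operatorname{pd}_{A}M$. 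Your version is slightly more elementary, works for arbitrary (not necessarily finitely generated) modules without any noetherian reduction, and would apply even if $\Lambda$ were free of infinite rank; the paper's version yields the Ext-level isomorphism that fits the theme of the paper (annihilation of Ext). Two small points of hygiene: the parenthetical claim that a direct summand of a free $A$-module is ``in fact free'' is false in general---it is only projective, which is all you actually use; and the identification of the restricted $A$-action on $M\otimes_{A}\Lambda$ with the componentwise action on $M^{(I)}$ rests on the fact that $A$ acts on $\Lambda$ through $\cent\Lambda$ (which you note), together with $I\neq\emptyset$, i.e.\ $\Lambda\neq 0$.
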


\begin{proof}
This is a consequence of the fact that, since $\Lambda$ is  free $A$-module of finite rank, for any finitely generated $A$-modules $M,N$ and integer $n$, there are isomorphisms
\[
\Ext^{n}_{A}(M,N)\otimes_{A}\Lambda \cong \Ext^{n}_{\Lambda}(M\otimes_{A}\Lambda, N\otimes_{A}\Lambda)\,.
\]
Note that $M\otimes_{A}\Lambda$ and $N\otimes_{A}\Lambda$ are finitely generated $\Lambda$-modules.
\end{proof}

As usual, we write $\Spec R$ for the collection of prime ideals in a commutative ring $R$, with the Zariski topology. Thus, the closed sets in the topology are the subsets
\[
\mcV(I) :=\{\fp\in\Spec R\mid \fp\supseteq I\}\,,
\]
for ideals $I\subseteq R$. Extending a notion from commutative algebra, we introduce the \emph{regular locus} and the \emph{singular locus} of a noether algebra as the following subsets of $\Spec \cent\Lambda$:
\[
\Reg \Lambda = \{\fp\in\Spec\cent\Lambda\mid \gldim \Lambda_{\fp} <\infty\} \quad\text{and}\quad
\Sing \Lambda = \Spec\cent\Lambda \setminus \Reg \Lambda\,.
\]
When $\Lambda$ is free as a $\cent\Lambda$-module, it follows from Lemma~\ref{le:gldim-descent} that $\Reg \Lambda\subseteq \Reg \cent\Lambda$.

\begin{lemma}
\label{le:ca-localize}
Let $\Lambda$ be a noether algebra. 
\begin{enumerate}[{\quad\rm(1)}]
\item For any multiplicatively closed subset $U\subset \cent\Lambda$ there are inclusions
\[
U^{-1} \cande n\Lambda\subseteq \cande n{U^{-1}\Lambda}\quad\text{and}\quad U^{-1} \can \Lambda \subseteq \can {U^{-1}\Lambda}\,.
\]
\item
There is an inclusion $\Sing\Lambda\subseteq \mcV(\can \Lambda)$.
\item
If $\can\Lambda$ is not nilpotent, $\Reg\Lambda$ contains a non-empty open subset of $\Spec\cent\Lambda$.
\end{enumerate}
\end{lemma}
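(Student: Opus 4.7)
For (1), my plan is to reduce to the standard isomorphism
\[
U^{-1}\Ext^{n}_{\Lambda}(M,N) \cong \Ext^{n}_{U^{-1}\Lambda}(U^{-1}M, U^{-1}N),
\]
valid for $M,N$ in $\mod\Lambda$ since $\Lambda$ is noetherian and $M$ admits a resolution by finitely generated projectives. I also need the fact that every finitely generated $U^{-1}\Lambda$-module has the form $U^{-1}M$ for some $M\in\mod\Lambda$, obtained by clearing denominators in a finite generating set. Given $a\in\cande n\Lambda$ and $u\in U$, these two facts together show that $a/u$ annihilates $\Ext^{n}_{U^{-1}\Lambda}(M',N')$ for all finitely generated $M',N'$, yielding the first inclusion. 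Taking the union over $n$ gives the second.

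For (2), I would argue the contrapositive: fix $\fp\in\Spec\cent\Lambda$ with $\fp\not\supseteq\can\Lambda$ and show $\fp\in\Reg\Lambda$. Pick $a\in\can\Lambda\setminus\fp$; then $a/1$ is a unit in $(\cent\Lambda)_{\fp}$, and since the action of $\cent\Lambda$ on $\Lambda_{\fp}$ factors through $\cent{\Lambda_{\fp}}$, part (1) places $a/1$ inside the ideal $\can{\Lambda_{\fp}}$ of $\cent{\Lambda_{\fp}}$. An ideal containing a unit is the whole ring, so $\cande s{\Lambda_{\fp}}=\cent{\Lambda_{\fp}}$ for some $s$, meaning $\Ext^{\ges s}_{\Lambda_{\fp}}(M,N)=0$ for all $M,N\in\mod\Lambda_{\fp}$. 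A standard dimension-shift, combined with the observation that $\Ext^{s}_{\Lambda_{\fp}}(M,-)$ commutes with direct limits for $M$ finitely generated over a (two-sided) noetherian ring, then promotes this to the vanishing of $\Ext^{s}_{\Lambda_{\fp}}(M,-)$ on all modules, hence $\gldim\Lambda_{\fp}\les s-1<\infty$.

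Part (3) I plan to derive formally from (2). If $\can\Lambda$ is not nilpotent, it is not contained in $\nil(\cent\Lambda)$, which is the intersection of all primes of $\cent\Lambda$, so there exists a prime not containing $\can\Lambda$; equivalently, $\mcV(\can\Lambda)$ is a proper closed subset of $\Spec\cent\Lambda$, and its complement is a non-empty open subset contained in $\Reg\Lambda$ by (2).

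The main technical point is the step in (2) where vanishing of $\Ext^{\ges s}$ on $\mod\Lambda_{\fp}\times\mod\Lambda_{\fp}$ is upgraded to finite global dimension; this requires some care since $\Lambda_{\fp}$ need not be commutative, but it is module-finite over the noetherian ring $(\cent\Lambda)_{\fp}$, so both left and right noetherianness and the standard direct-limit argument apply. Everything else is bookkeeping with the localization isomorphism for Ext.
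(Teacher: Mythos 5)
Your proposal is correct and follows essentially the same route as the paper: part (1) via the localization isomorphism for Ext together with the fact that finitely generated $U^{-1}\Lambda$-modules are localizations of finitely generated $\Lambda$-modules, part (2) by noting $\can{\Lambda_{\fp}}=(1)$ when $\fp\not\supseteq\can\Lambda$, and part (3) as a formal consequence of (2). The extra step you supply in (2) (dimension shift plus commuting Ext with direct limits over the noetherian ring $\Lambda_{\fp}$) is just an explicit justification of the characterization of finite global dimension that the paper takes for granted in Example~\ref{ex:regular-rings}, so it is the same argument with more detail rather than a different one.
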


\begin{proof}
The crux of the proof of (1) is that each finitely generated module over $U^{-1}\Lambda$ has the form $U^{-1}M$, for some $M\in\mod\Lambda$. It remains to note that  there are isomorphisms
\[
U^{-1}\Ext_{\Lambda}^n(M,N) \cong \Ext^n_{U^{-1}\Lambda}(U^{-1}M,U^{-1}N)\,,
\]
for any $M,N\in \mod\Lambda$ and integer $n$, and that $\cent{(U^{-1}\Lambda)}=U^{-1}\cent\Lambda$.

\medskip

(2) For $\fp\in\Spec \cent\Lambda$ with $\fp\not \supseteq \can\Lambda$, it follows from (1) that $\can{\Lambda_{\fp}}=(1)$, and hence that $\gldim \Lambda_{\fp}$ is finite. 

\medskip

(3) This follows from (2) because $\mcV(\can\Lambda)\ne \Spec\cent\Lambda$ when $\can\Lambda$ contains non-nilpotent elements.
\end{proof}

Part (2) of the preceding result raises the question: For which rings is there an equality $\Sing\Lambda = \mcV(\can\Lambda)$?
See Theorem~\ref{th:generator-sing} for a partial answer.  Such an equality implies, in particular, that the singular locus of $\Lambda$ is a closed subset of $\Spec\cent\Lambda$, and this is not always the case, even for commutative rings.

\begin{example}
\label{ex:non-open-loci}
The first systematic investigation of rings with non-closed singular loci is due to Nagata~\cite{Nagata59}*{\S\S~4,5}. A particularly simple procedure for constructing such examples was discovered by Hochster~\cite{Hochster73}*{Example 1}, who used it to describe one-dimensional domain $R$ with countably infinitely many prime ideals of height one such that $\Reg R=\{(0)\}$, and the intersection of any infinite set of maximal ideals of $R$ is $(0)$. Thus $\Sing R$ is not contained in any closed set.  Ferrand and Raynaud~\cite{FerrandRaynaud70}*{Proposition~3.5} have constructed a three-dimensional local domain containing $\mathbb{C}$ whose singular locus is not closed.
\end{example}

\subsection*{Annihilators of $\Ext^{1}$}
Let $\Lambda$ be an associative ring and let $M,N$ be $\Lambda$-modules. We record a few simple observations on the annihilators of $\Ext^{1}_{\Lambda}(M,N)$, for later use.  The one below has been made before; see \cite{HerzogPopescu97}*{Lemma 2.2} and \cite{Takahashi10}*{Lemma 2.1}.
 
\begin{remark}
\label{re:ext2}
Let $M$ be a $\Lambda$-module. If $a\in\cent\Lambda$ annihilates $\Ext^{1}_{\Lambda}(M,\syz_{\Lambda}M)$, then there is an exact sequence of $\Lambda$-modules
\[
0\lra (0\,\colon_{M}\,a)\lra M\bigoplus \syz_{\Lambda}M \lra \syz_{\Lambda}(M/aM)\lra 0\,.
\]
Indeed, consider the commutative diagram with exact rows:
\[
\xymatrixrowsep{1.5pc}
\xymatrix{
0 \ar@{->}[r] & \syz M \ar@{->}[r] \ar@{=}[d] & N \ar@{->}[r] \ar@{->}[d] & M \ar@{->}[r]\ar@{->}[d]^{a} & 0 \\
0 \ar@{->}[r] &\syz M \ar@{->}[r] & P \ar@{->}[r] & M \ar@{->}[r] & 0}
\]
where the lower one, with $P$ is projective, defines $\syz M$ and the upper one is obtained from it by a pull-back along the map $M\xra{a}M$. Since $a$ annihilates $\Ext_\Lambda^1(M,\syz_\Lambda M)$, the upper sequence splits so that $N\cong M\oplus\syz_\Lambda M$. It remains to invoke the Snake Lemma.
\end{remark}

\begin{remark}
\label{re:ext1}
Fix  $\xi\in \Ext^{1}_{\Lambda}(M,N)$ represented by the extension $0\to N\to E\xra{\eps} M\to 0$. It is easy to verify that for $a\in\cent\Lambda$, one has $a\xi=0$ if and only if the homothety defined by $a$ on $M$ factors through $\eps$; that is to say, there is a commutative diagram of $\Lambda$-modules:
\[
\xymatrixcolsep{1pc}
\xymatrixrowsep{1pc}
\xymatrix{
M \ar@{->}[rr]^{a} && M \\
& E \ar@{<-}[ul] \ar@{->}[ur]_{\eps}
}
\]
\end{remark}

Here is a consequence of this observation; it can also be deduced from Remark~\ref{re:ext2}. This result will be used repeatedly in what follows. Observe that, in view of the stated equality, the ideal on the right is independent of the choice of the syzygy module, $\syz^{n}_{\Lambda} M$. 

\begin{lemma}
\label{le:ca-module}
For any $\Lambda$-module $M$ and choice of $n$th syzygy module $\syz^{n}_{\Lambda} M$, for an integer $n\ge 1$, there is an equality
\[
\ann_{\cent\Lambda}\Ext^{\ges n}_{\Lambda}(M,\Mod \Lambda) = \ann_{\cent\Lambda}\Ext^{n}_{\Lambda}(M,\syz^{n}_{\Lambda} M) \,.
\]
\end{lemma}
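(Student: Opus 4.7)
The containment $\subseteq$ is immediate from the definition: specialize the universal annihilator on the left to the case $N=\syz^n_\Lambda M$ and $i=n$.

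For the reverse containment, the strategy is to exhibit a single ``universal'' class in $\Ext^n_\Lambda(M,\syz^n_\Lambda M)$ through which all higher Ext of $M$ factor via Yoneda product. Fix a projective resolution $\cdots \to P_1 \to P_0 \to M \to 0$ compatible with the chosen syzygy, so that $\syz^n_\Lambda M = \ker(P_{n-1}\to P_{n-2})$, and let $\eta\in \Ext^n_\Lambda(M,\syz^n_\Lambda M)$ denote the class of the $n$-fold extension
\[
0 \lra \syz^n_\Lambda M \lra P_{n-1} \lra \cdots \lra P_0 \lra M \lra 0.
\]
Splicing through the short exact sequences $0\to \syz^{k+1}_\Lambda M\to P_k\to \syz^k_\Lambda M\to 0$ (with $\syz^0_\Lambda M := M$), whose middle term is projective, the iterated connecting homomorphism produces a natural $\cent\Lambda$-linear surjection
\[
\Ext^{i-n}_\Lambda(\syz^n_\Lambda M, N) \thra \Ext^i_\Lambda(M,N),\qquad \beta \longmapsto \beta \smile \eta,
\]
for every $N\in\Mod\Lambda$ and every $i\ges n$ (compare Remark~\ref{re:ext-and-syzygy}).

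Now suppose $a\in\cent\Lambda$ annihilates $\Ext^n_\Lambda(M,\syz^n_\Lambda M)$; in particular, $a\eta=0$. Since $a$ is central it commutes with Yoneda multiplication, so $a(\beta \smile \eta) = \beta \smile (a\eta) = 0$ for every $\beta$. The surjectivity above then forces $a\cdot\Ext^i_\Lambda(M,N)=0$ for all $N\in\Mod\Lambda$ and all $i\ges n$, which is the desired containment.

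The main (minor) obstacle is bookkeeping: making precise the $\cent\Lambda$-module structure on Ext and identifying the iterated connecting map with Yoneda multiplication by $\eta$. If one prefers to avoid Yoneda products entirely, the same conclusion follows by induction on $i\ges n$. The base case uses the surjection $\Hom_\Lambda(\syz^n_\Lambda M,N)\thra \Ext^n_\Lambda(M,N)$ sending $f$ to $f_*(\eta)$, so that $a f_*(\eta)=f_*(a\eta)=0$; for the inductive step, choose any embedding $0\to N\to I\to N'\to 0$ with $I$ injective in $\Mod\Lambda$, which gives a $\cent\Lambda$-linear surjection $\Ext^{i-1}_\Lambda(M,N')\thra \Ext^i_\Lambda(M,N)$ to which the inductive hypothesis applies.
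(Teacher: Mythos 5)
Your proof is correct: both the Yoneda-product argument and the fallback induction are sound, and the two facts you lean on (surjectivity of $\beta\mapsto\beta\smile\eta$ onto $\Ext^{i}_{\Lambda}(M,N)$ for all $i\ges n$, and the $\cent\Lambda$-bilinearity $a(\beta\smile\eta)=\beta\smile(a\eta)$) are standard. The paper's route is the same in spirit---annihilating the syzygy extension class is the universal obstruction---but different in execution: it first reduces to $n=1$ by replacing $M$ with $\syz^{n-1}M$ (via the shift isomorphisms of Remark~\ref{re:ext-and-syzygy}), then uses Remark~\ref{re:ext1} to translate $a\cdot[0\to\syz M\to P\to M\to 0]=0$ into a factorization of the homothety $a\colon M\to M$ through the projective $P$, and simply applies $\Ext^{i}_{\Lambda}(-,N)$ to that factorization, so that multiplication by $a$ on $\Ext^{i}_{\Lambda}(M,N)$ factors through $\Ext^{i}_{\Lambda}(P,N)=0$. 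That version needs no appeal to Yoneda products nor to the identification of iterated connecting maps with multiplication by $\eta$, so it is more self-contained and essentially two lines; your formulation makes the ``universal class'' mechanism explicit, which is conceptually clean, at the cost of the bookkeeping you acknowledge. Your alternative induction is a genuinely different mechanism---it dimension-shifts in the second variable along an injective embedding of $N$ rather than in the first variable along projectives---and is legitimate here precisely because the statement is over all of $\Mod\Lambda$, where injectives are available.
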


\begin{proof}
Evidently, the ideal on the left is contained in the one on the right, so it suffices to verify the reverse containment.  Replacing $M$ by $\syz^{n-1}M$ we may assume that $n=1$. Let $0\to \syz M\to P\to M\to 0$ be the exact sequence of $\Lambda$-modules, with $P$ projective, defining $\syz M$, and let $a$ be an element in $\cent\Lambda$ that annihilates this, when viewed as a class in $\Ext^{1}_{\Lambda}(M,\syz M)$. From Remark~\ref{re:ext1} one then gets the commutative diagram on the left:
\[
\begin{gathered}
\xymatrixcolsep{1pc}
\xymatrixrowsep{1pc}
\xymatrix{
M \ar@{->}[rr]^{a} && M \\
& P \ar@{<-}[ul] \ar@{->}[ur]
}
\end{gathered}
\quad\quad
\begin{gathered}
\xymatrixcolsep{1pc}
\xymatrixrowsep{1pc}
\xymatrix{
\Ext^{i}_{\Lambda}(M,N) \ar@{<-}[rr]^{a} && \Ext^{i}_{\Lambda}(M,N) \\
& \Ext^{i}_{\Lambda}(P,N) \ar@{->}[ul] \ar@{<-}[ur]
}
\end{gathered}
\]
The one on the right is obtained from the one on the left by applying $\Ext^{i}(-,N)$, where $N$ is any $\Lambda$-module and $i$ any integer. It remains to note that $\Ext^{i}_{\Lambda}(P,N)=0$ when $i\ge 1$.
\end{proof}

\section{The noether different}
\label{se:the-noether-different}

In this section we explain how certain ideas introduced by Noether~\cite{Noether50}, and developed by Auslander and Goldman~\cite{AuslanderGoldman60}, and also Scheja and Storch~\cite{SchejaStorch73}, can be used to find cohomology annihilators, especially those that are non-zerodivisors. The results presented are inspired by, and extend to not necessarily commutative rings, those of Wang~\cites{Wang94,Wang99}; see also \cites{PopescuRoczen90,Yoshino90}. The novelty, if any, is that some arguments are simpler, and we contend more transparent; confer the proofs of Lemma~\ref{le:ndiff-annihilates} below with that of \cite{Wang94}*{Proposition 5.9}. 

Let $A$ be a commutative ring and $\Lambda$ an $A$-algebra. The opposite ring  of $\Lambda$, denoted $\Lambda^{\!\mathsf o}$, is also an $A$-algebra,  and so is its enveloping algebra $\env \Lambda := \opp \Lambda\otimes_{A} \Lambda$. Then $\Lambda$ is a right module over $\env\Lambda$, with action given by $\lambda\cdot (x\otimes y)=x\lambda y$. The map
\begin{equation}
\label{eq:mu}
\mu\colon \env \Lambda \lra \Lambda \quad \text{defined by $\mu(x\otimes y)=xy$}
\end{equation}
is a surjective homomorphism of right $\env \Lambda$-modules. Recall that $\Hom_{\env\Lambda}(\Lambda,\Lambda)$ is the center, $\cent\Lambda$, of $\Lambda$. The image of the induced map
\[
\Hom_{\env\Lambda}(\Lambda,\mu)\colon \Hom_{\env \Lambda}(\Lambda,\env\Lambda)\lra \Hom_{\env \Lambda}(\Lambda,\Lambda) = \cent\Lambda
\]
is the \textit{noether different} of $\Lambda$ over $A$; we denote it $\ndiff A\Lambda$. It is an ideal in $\cent\Lambda$, and can be identified with $\mu(\ann_{\env \Lambda}\Ker\mu)$. 

\begin{remark}
\label{rem:ndiff}
Fix an element $x\in \cent\Lambda$. The homothety map $\Lambda\xra{\ x \ }\Lambda$ is then $\env\Lambda$-linear and it is immediate from the definition that this map admits a $\env\Lambda$-linear factorization 
\[
\Lambda\xra{\ \eta_{x} \ } \env\Lambda\xra{\mu}\Lambda\,.
\]
if and only if $x$ is in $\ndiff A\Lambda$. In this case, for each $\Lambda$-module $M$ one gets, on applying $M\otimes_{\Lambda}-$, a factorization as $\Lambda$-modules:
\begin{equation}
\label{eq:ndiff-lifting}
\xymatrixcolsep{2.5pc}
\xymatrix{
M \ar@/^1.5pc/[rr]^{x} \ar@{->}[r]_-{M\otimes\eta_{x}} & M \otimes_{A} \Lambda \ar@{->}[r]_{M \otimes \mu} & M 
}
\end{equation}
\end{remark}

When $\Lambda$ is commutative, the following result is~\cite{Wang94}*{Proposition 5.9}; the proof we offer is also different from the one in \emph{op.~cit.}

\begin{lemma}
\label{le:ndiff-annihilates}
For any $\Lambda$-modules $M$ and $N$, there is an inclusion
\[
\ndiff A\Lambda \cdot \ann_{A}\Ext^{1}_{A}(M,N) \subseteq \ann_{\cent\Lambda} \Ext^{1}_{\Lambda}(M,N)\,.
\]
\end{lemma}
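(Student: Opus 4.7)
The plan is to fix arbitrary elements $x\in\ndiff A\Lambda$ and $a\in\ann_{A}\Ext^{1}_{A}(M,N)$, fix an arbitrary class $\xi\in\Ext^{1}_{\Lambda}(M,N)$ represented by an extension
\[
0\lra N\lra E\xra{\ \eps\ } M\lra 0
\]
of $\Lambda$-modules, and show that $xa$ annihilates $\xi$ by constructing a $\Lambda$-linear map $\psi\colon M\to E$ satisfying $\eps\circ\psi=xa\cdot\mathrm{id}_M$; the assertion then follows from Remark~\ref{re:ext1}. The whole game is to upgrade an $A$-linear witness to a $\Lambda$-linear one, and the noether different is exactly the tool that effects this upgrade.

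First I would restrict the given exact sequence along $A\to\cent\Lambda$. The hypothesis on $a$ implies (again via Remark~\ref{re:ext1}, applied over $A$) that there exists an $A$-linear map $\vf\colon M\to E$ with $\eps\circ\vf=a\cdot\mathrm{id}_{M}$. Separately, since $x\in\ndiff A\Lambda$, Remark~\ref{rem:ndiff} provides an $\env\Lambda$-linear map $\eta_{x}\colon\Lambda\to\env\Lambda$ with $\mu\circ\eta_{x}=x\cdot\mathrm{id}_{\Lambda}$; tensoring with any $\Lambda$-module yields the factorization~\eqref{eq:ndiff-lifting}, which is $\Lambda$-linear with the middle term $M\otimes_{A}\Lambda$ carrying its induced right $\Lambda$-module structure.

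Next I would define the candidate lift as the composite
\[
\psi\colon\ M\xra{\ M\otimes\eta_{x}\ }M\otimes_{A}\Lambda\xra{\vf\otimes_{A}\Lambda}E\otimes_{A}\Lambda\xra{\ E\otimes\mu\ }E.
\]
Each arrow is $\Lambda$-linear: the outer two by Remark~\ref{rem:ndiff}, and the middle one because extension of scalars turns any $A$-linear map into a $\Lambda$-linear map between induced modules. A one-line diagram chase, using that $\eps$ is $\Lambda$-linear (so $\eps\circ(E\otimes\mu)=(M\otimes\mu)\circ(\eps\otimes_{A}\Lambda)$) and that $\eps\circ\vf=a\cdot\mathrm{id}_{M}$, gives
\[
\eps\circ\psi=(M\otimes\mu)\circ((\eps\circ\vf)\otimes_{A}\Lambda)\circ(M\otimes\eta_{x})=a\cdot\bigl((M\otimes\mu)\circ(M\otimes\eta_{x})\bigr)=ax\cdot\mathrm{id}_{M},
\]
and $ax=xa$ since $a\in A$ acts centrally on $\Lambda$. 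By Remark~\ref{re:ext1} this means $xa\cdot\xi=0$, which is exactly what was wanted.

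The only step that requires real thought is verifying $\Lambda$-linearity of $\vf\otimes_{A}\Lambda$ with respect to the correct (right) $\Lambda$-action on the target factor; everything else is bookkeeping. The conceptual point the proof makes visible is that $\ndiff A\Lambda$ measures precisely the failure of $\Lambda$ to be $A$-projective as a bimodule, and multiplying by an element of this ideal is exactly what is needed to promote $A$-linear constructions involving $M$ and $N$ to $\Lambda$-linear ones.
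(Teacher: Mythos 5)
Your proof is correct and is essentially the paper's own argument: you extract an $A$-linear map $\vf$ with $\eps\vf=a$ from Remark~\ref{re:ext1}, and then compose $M\xra{M\otimes\eta_x}M\otimes_A\Lambda\xra{\vf\otimes\Lambda}E\otimes_A\Lambda\xra{E\otimes\mu}E$ exactly as in the paper's commutative diagram, with the same verification that the composite with $\eps$ is $xa\cdot\mathrm{id}_M$. Only your closing aside is slightly off (the noether different measures the failure of separability, i.e.\ of $\Lambda$ being projective over $\env\Lambda$, not of $\Lambda$ being $A$-projective as a bimodule), but this does not affect the argument.
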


\begin{proof}
Fix an extension $0\to N\to L\xra{\ g\ }M\to 0$ of $\Lambda$-modules and an element $a$ in $A$ that annihilates it, when viewed as an extension of $A$-modules. Thus, there exists an $A$-linear map $h\colon M\to L$ such that $gh = a$; see Remark~\ref{re:ext1}. The desired statement is that, given an element $x$ in $\ndiff A\Lambda$, there is a $\Lambda$-linear map $M\to L$ whose composition with $g$ is the map $M\xra{\ xa\ } M$. Such a map is furnished by composing maps along the unique path from $M$ to $L$ in the commutative diagram of $\Lambda$-modules:
\[
\xymatrixcolsep{2.5pc}
\xymatrix{
	&M \ar@{<-}[d]^{M\otimes \mu}\ar@{->}[drrr]^{a}& \\
M \ar@{->}[ur]^{x} \ar@{->}[r]_-{M\otimes \eta_{x}} & M\otimes_{A}\Lambda \ar@{->}[r]_{h\otimes \Lambda} 
	& L \otimes_{A} \Lambda \ar@{->}[r]_-{L \otimes \mu} & L \ar@{->}[r]_{g}& M 
}
\]
The triangle on the left is from \eqref{eq:ndiff-lifting}, and is commutative by construction; that of the one on the right is a direct verification.
\end{proof}

In order to proceed we need the following result that elaborates the connection between degree-shifting of Ext and syzygies; confer Remark~\ref{re:ext-and-syzygy}. Henceforth let $\Lambda$ be a noether $A$-algebra so that finitely generated $\Lambda$-modules are also finitely generated as $A$-modules.

\begin{lemma}
\label{le:ext-and-syzygy}
Let $\Lambda$ be a noether $A$-algebra and set $I = \ann_{A}\Ext^{1}_{A}(\Lambda,\syz_{A}\Lambda)$. For each integer $n\ge 1$, and for $0\le i \le n-1$, there is an inclusion
\[
I^{i} \ann_{A}\Ext^{n}_{A}(\mod \Lambda,\mod\Lambda)\subseteq \ann_{A}\Ext^{n-i}_{A}(\syz_{\Lambda}^{i}(\mod \Lambda),\mod\Lambda)\,.
\]
\end{lemma}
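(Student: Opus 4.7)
The plan is to induct on $i$, using the long exact sequence of $\Ext_{A}^{*}(-,N)$ attached to the standard $\Lambda$-syzygy short exact sequences and the fact that a $\Lambda$-projective module, while not necessarily $A$-projective, has higher $A$-Ext annihilated by $I$.

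The key preliminary observation is that $I$ controls all higher $A$-Ext out of $\Lambda$. Indeed, since $\Lambda$ is finitely generated as an $A$-module, Lemma~\ref{le:ca-module} applies with $A$ in place of $\Lambda$ and $\Lambda$ in place of $M$, yielding
\[
I = \ann_{A}\Ext^{1}_{A}(\Lambda,\syz_{A}\Lambda) = \ann_{A}\Ext^{\ges 1}_{A}(\Lambda,\Mod A)\,.
\]
Consequently, for any finitely generated projective $\Lambda$-module $P$ (which is a direct summand of some $\Lambda^{k}$) and any $A$-module $N$, the ideal $I$ annihilates $\Ext^{j}_{A}(P,N)$ for every $j\ge 1$.

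The base case $i=0$ is the tautology $\syz^{0}_{\Lambda}M = M$. For the inductive step, assume the inclusion for some $i$ with $0\le i\le n-2$ and fix $M,N\in\mod\Lambda$. Choose a finitely generated projective $\Lambda$-module $P$ fitting into
\[
0 \lra \syz_{\Lambda}^{i+1}M \lra P \lra \syz_{\Lambda}^{i}M \lra 0\,,
\]
and apply $\Ext^{*}_{A}(-,N)$ to obtain the exact sequence
\[
\Ext^{n-i-1}_{A}(P,N) \lra \Ext^{n-i-1}_{A}(\syz_{\Lambda}^{i+1}M,N) \xra{\ \delta\ } \Ext^{n-i}_{A}(\syz_{\Lambda}^{i}M,N)\,.
\]
Given $a\in \ann_{A}\Ext^{n}_{A}(\mod\Lambda,\mod\Lambda)$, $c\in I^{i}$, and $x\in \Ext^{n-i-1}_{A}(\syz_{\Lambda}^{i+1}M,N)$, the inductive hypothesis gives $\delta(acx)=ac\,\delta(x)=0$, so $acx$ lies in the image of $\Ext^{n-i-1}_{A}(P,N)$. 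Since $n-i-1\ges 1$, the preliminary observation shows that any $b\in I$ kills this image, so $(abc)x=0$. This establishes the inclusion for $i+1$.

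The only point that could cause any friction is verifying that the preliminary observation does apply whenever we invoke it during the induction: the exponent $n-i-1$ must be at least $1$, which holds precisely in the range $0\le i\le n-2$ covered by the inductive step; the remaining case (the bound $i=n-1$) is exactly the output of the step starting at $i=n-2$, so nothing further is needed. No other obstacle is expected.
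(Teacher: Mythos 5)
Your proof is correct and follows essentially the same route as the paper: induction on $i$ using the $\Lambda$-syzygy sequence $0\to\syz_{\Lambda}^{i+1}M\to P\to\syz_{\Lambda}^{i}M\to 0$, the induced $\Ext_A$-sequence, and the identification $I=\ann_{A}\Ext^{\ges 1}_{A}(\Lambda,\Mod A)$ from Lemma~\ref{le:ca-module} to kill the term coming from the projective $P$. Your element-level bookkeeping with $a$, $b$, $c$ is just a spelled-out version of the paper's statement that $I^{i+1}J$ annihilates the middle term.
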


\begin{proof}
The proof is an induction on $i$; the base case $i=0$ is a tautology. Assume that the desired inclusion holds for some integer $i$ with $0\le i< n-1$.  Fix $M$ and $N$ in $\mod\Lambda$. The exact sequence of $\Lambda$-modules 
\[
0 \to \syz_{\Lambda }^{i+1}M \to P \to \syz_{\Lambda }^{i}M \to 0
\]
with $P$ a finitely generated projective,  induces an exact sequence of $A$-modules
\[
\Ext_A^{n-i-1}(P,N)\to\Ext_A^{n-i-1}(\syz_{\Lambda}^{i+1}M,N)\to\Ext_A^{n-i}(\syz_{\Lambda}^{i}M,N)
\]
Write $J$ for the annihilator of $\Ext^{n}_{A}(\mod\Lambda,\mod\Lambda)$ as an $A$-module. In the sequence above, the module on the right is annihilated by $I^{i}J$ by the induction hypothesis, while the one on the left is annihilated by $I$; this follows from Lemma~\ref{le:ca-module}, as $n-i-1\ge1$ and $P$ is a direct summand of a free $\Lambda$-module. Thus, $I^{i+1}J$ annihilates the module in the middle. This completes the induction step.
\end{proof}

The result below is our main tool for finding cohomology annihilators. Its proof in fact shows that  $I^{d} \cdot \ndiff A{\Lambda}$ annihilates $\Ext_{\Lambda}^{n}(M,N)$ for $n > d$, where $d=\gldim A$, for \emph{all} $\Lambda$-modules $M$ and $N$, and not only for the finitely generated ones.

\begin{proposition}
\label{pr:noether-normalization}
Let $A$ be commutative noetherian ring, $\Lambda$ a noether $A$-algebra, and set $I=\ann_{A}\Ext_A^{1}(\Lambda,\syz_{A}\Lambda)$.
If $d:=\gldim A$ is finite, then 
\[
I^{d}\cdot \ndiff A{\Lambda}\subseteq \cande {n}{\Lambda} \quad\text{for $n\ge d+1$.}
\]
\end{proposition}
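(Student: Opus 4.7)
The plan is to chain together the three results immediately preceding the proposition. Since $\cande n \Lambda \subseteq \cande{n+1}\Lambda$, it suffices to prove the inclusion for the single value $n = d+1$. Thus the goal reduces to showing that $I^{d}\cdot\ndiff A\Lambda$ annihilates $\Ext^{d+1}_{\Lambda}(M,N)$ for every pair $M,N\in\mod\Lambda$.

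Next I would invoke Remark~\ref{re:ext-and-syzygy} (or equivalently Lemma~\ref{le:ca-module}) to rewrite this Ext as
\[
\Ext^{d+1}_{\Lambda}(M,N)\cong \Ext^{1}_{\Lambda}(\syz^{d}_{\Lambda}M,N),
\]
so what has to be verified is the inclusion
\[
I^{d}\cdot\ndiff A\Lambda\subseteq \ann_{\cent\Lambda}\Ext^{1}_{\Lambda}(\syz^{d}_{\Lambda}M,N).
\]
Lemma~\ref{le:ndiff-annihilates} supplies exactly the bridge needed: it gives
\[
\ndiff A\Lambda\cdot \ann_{A}\Ext^{1}_{A}(\syz^{d}_{\Lambda}M,N)\subseteq \ann_{\cent\Lambda}\Ext^{1}_{\Lambda}(\syz^{d}_{\Lambda}M,N),
\]
so matters come down to checking that $I^{d}$ annihilates $\Ext^{1}_{A}(\syz^{d}_{\Lambda}M,N)$.

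This last step is where the global dimension hypothesis enters, through Lemma~\ref{le:ext-and-syzygy} applied with $n=d+1$ and $i=d$: it yields
\[
I^{d}\cdot\ann_{A}\Ext^{d+1}_{A}(\mod\Lambda,\mod\Lambda)\subseteq \ann_{A}\Ext^{1}_{A}(\syz^{d}_{\Lambda}M,N).
\]
Because $\gldim A=d$, the module $\Ext^{d+1}_{A}(\mod\Lambda,\mod\Lambda)$ vanishes, so the annihilator on the left equals $A$, and the desired containment $I^{d}\subseteq \ann_{A}\Ext^{1}_{A}(\syz^{d}_{\Lambda}M,N)$ drops out. Combining the three displayed inclusions completes the proof. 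The only mild subtlety is the boundary case $d=0$, where one does not need Lemma~\ref{le:ext-and-syzygy} at all: the hypothesis $\gldim A=0$ forces $\Ext^{1}_{A}(M,N)=0$ for every pair of $\Lambda$-modules, and Lemma~\ref{le:ndiff-annihilates} directly gives $\ndiff A\Lambda\subseteq \cande{1}\Lambda$. No step presents a real obstacle; the work has already been done in the preceding lemmas, and the proof is essentially a bookkeeping exercise that identifies which lemma to invoke at which Ext degree.
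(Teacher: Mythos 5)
Your proposal is correct and follows exactly the paper's argument: reduce to $n=d+1$ via Remark~\ref{re:ext-and-syzygy}, apply Lemma~\ref{le:ext-and-syzygy} with $n=d+1$, $i=d$ together with the vanishing $\Ext^{d+1}_{A}(\mod A,\mod A)=0$ to get $I^{d}\subseteq\ann_{A}\Ext^{1}_{A}(\syz^{d}_{\Lambda}M,N)$, and then conclude with Lemma~\ref{le:ndiff-annihilates}. The remark about the case $d=0$ is harmless but unnecessary, since $i=0$ is permitted (and trivial) in Lemma~\ref{le:ext-and-syzygy}.
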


\begin{proof}
The hypothesis on $A$ is that $\Ext^{d+1}_{A}(\mod A,\mod A)=0$. Thus, for any $\Lambda$-modules $M$ and $N$, Lemma~\ref{le:ext-and-syzygy} (applied with $n=d+1$ and $i=d$) yields an inclusion
\[
I^{d}\subseteq \ann_{A} \Ext_{A}^{1}(\syz^{d}_{\Lambda}M,N)\,.
\]
Lemma~\ref{le:ndiff-annihilates} then implies  $ I^d\ndiff A{\Lambda}$ annihilates $\Ext_{\Lambda}^{1}(\syz^{d}_{\Lambda}M,N)\cong \Ext_{\Lambda}^{d+1}(M,N)$. This justifies the desired inclusion for $n=d+1$; it remains to recall Remark~\ref{re:ext-and-syzygy}.
\end{proof}

The preceding result is effective only when the noether different is non-zero, whence the import of the next one. An $A$-algebra $\Lambda$ is \emph{separable} is $\ndiff A{\Lambda}=\cent\Lambda$; see \cite{AuslanderGoldman60}.

\begin{lemma}
\label{le:ndiff-separable}
Let $A$ be a commutative ring and $Q$ its ring of fractions. If $\Lambda$ is a noether $A$-algebra such that the $Q$-algebra $Q\otimes_{A}\Lambda$ is separable, then $\ndiff A\Lambda$ contains a non-zerodivisor of $\cent\Lambda$.
\end{lemma}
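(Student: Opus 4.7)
The plan is to localize and exploit the separability hypothesis. I would first establish two preliminary facts: (i) formation of the noether different commutes with the flat base change $A\to Q$, giving $Q\otimes_{A}\ndiff A\Lambda = \ndiff Q{Q\otimes_{A}\Lambda}$; and (ii) formation of the center commutes with this localization, giving $Q\otimes_A\cent\Lambda = \cent{(Q\otimes_A\Lambda)}$. Both rest on the flat base change isomorphism
\[
Q\otimes_A\Hom_{\env\Lambda}(\Lambda, N)\;\cong\;\Hom_{\env{(Q\otimes_A\Lambda)}}(Q\otimes_A\Lambda,\, Q\otimes_A N)
\]
applied with $N=\env\Lambda$ and $N=\Lambda$; this is valid because $\Lambda$ is finitely presented over $\env\Lambda$ (it is cyclic via $\mu$, and $\Ker\mu$ is finitely generated since $\Lambda$ is finitely generated over $A$) and because $\env{(Q\otimes_A\Lambda)}\cong Q\otimes_A\env\Lambda$.

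Combining (i), (ii), and the separability hypothesis $\ndiff Q{Q\otimes_A\Lambda} = \cent{(Q\otimes_A\Lambda)}$ yields $Q\otimes_A (\cent\Lambda/\ndiff A\Lambda) = 0$. Since $\cent\Lambda$ is finitely generated over $A$, so is this quotient, and its vanishing after inverting the multiplicative set $S$ of non-zerodivisors of $A$ produces some $a\in S$ with $a\cdot\cent\Lambda\subseteq\ndiff A\Lambda$; in particular, the image of $a$ under $A\to\cent\Lambda$ lies in $\ndiff A\Lambda$.

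The main obstacle is bridging from ``non-zerodivisor of $A$'' to ``non-zerodivisor of $\cent\Lambda$'', since the structural map $A\to\cent\Lambda$ need not preserve non-zerodivisors. I would handle this via prime avoidance in $\cent\Lambda$: the ideal $\ndiff A\Lambda$ contains a non-zerodivisor of $\cent\Lambda$ if and only if it is not contained in any $\fq\in\ass_{\cent\Lambda}\cent\Lambda$. Setting $\fp=\fq\cap A\in\ass_A\cent\Lambda$, and noting $J\cdot\cent\Lambda\subseteq\ndiff A\Lambda$ for $J=\ann_A(\cent\Lambda/\ndiff A\Lambda)$, it suffices to show $J\not\subseteq\fp$. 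When $\fp\cap S=\emptyset$ this is immediate from $J\cap S\ne\emptyset$. When $\fp\cap S\ne\emptyset$, the generator $c$ of $\fq$ is forced to be $A$-torsion in $\cent\Lambda$, and one must argue directly that this is incompatible with $\ndiff A\Lambda\subseteq\fq$---likely by analyzing the $A$-torsion submodule of $\Lambda$ (which forms a two-sided ideal) and invoking separability. I expect this case analysis, and the control of $A$-torsion in $\cent\Lambda$ under the separability hypothesis, to be the technical heart of the argument.
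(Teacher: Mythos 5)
Up to its last step your proposal is the paper's own argument: the two preliminary facts you prove by hand are exactly what the paper imports from Auslander--Goldman (Proposition 1.1 for ``separable $\Rightarrow$ $\ndiff{Q}{(Q\otimes_A\Lambda)}=\cent{(Q\otimes_A\Lambda)}$'' and Proposition 4.4 for compatibility of the noether different with the base change $A\to Q$), and from the resulting equality $Q\otimes_A\ndiff{A}{\Lambda}=\cent{(Q\otimes_A\Lambda)}$ the paper simply says the desired result follows. Your deduction that $a\cdot\cent\Lambda\subseteq\ndiff{A}{\Lambda}$ for some non-zerodivisor $a$ of $A$ is a correct and slightly more explicit rendering of that reduction.

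The genuine gap is the bridge you flag yourself, and your plan for closing it does not work. In your second case ($\fp\cap S\ne\emptyset$) you hope that an associated prime of $\cent\Lambda$ containing $\ndiff{A}{\Lambda}$ and meeting the image of $S$ contradicts separability; it does not, because separability of $Q\otimes_A\Lambda$ says nothing about the $A$-torsion of $\Lambda$ -- that torsion is annihilated by $-\otimes_AQ$. Concretely, let $A=k[x]$ and $\Lambda=k[x]\times k[y]/(y^2)$ with structure map $f\mapsto (f,f(0))$. Then $Q\otimes_A\Lambda\cong Q=k(x)$ is separable over $Q$, while a direct computation gives $\ndiff{A}{\Lambda}=k[x]\times(2y)$, which is contained in $k[x]\times(y)=\ann_{\Lambda}\bigl((0,y)\bigr)$, an associated prime of $\cent\Lambda=\Lambda$; so every element of the different is a zerodivisor, even though the image of the non-zerodivisor $x$ of $A$ lies in it. So the torsion case is not a technicality to be argued away but an actual obstruction: what the reduction yields is that $\ndiff{A}{\Lambda}$ contains the image of a non-zerodivisor of $A$, and this is a non-zerodivisor of $\cent\Lambda$ precisely when non-zerodivisors of $A$ act injectively on $\cent\Lambda$ (e.g.\ when $\cent\Lambda$ is a torsion-free $A$-module). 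That additional hypothesis is available in the paper's application of the lemma in Theorem~\ref{th:separable-noether-normalization} (there $\Lambda$ is a domain, finite over a regular subring $A$ of $\cent\Lambda$), and the paper's one-line conclusion tacitly uses it; your first case already completes the proof in that setting, but the case analysis you propose cannot be finished in the stated generality.
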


\begin{proof}
The separability hypothesis and \cite{AuslanderGoldman60}*{Proposition 1.1} imply 
\[
\ndiff{Q}{(Q\otimes_{A}\Lambda)}=\cent {(Q\otimes_{A}\Lambda)}\,.
\]
As $\ndiff{Q}{(Q\otimes_{A}\Lambda)} = Q\otimes_{A}{\ndiff A{\Lambda}}$ by \cite{AuslanderGoldman60}*{Proposition 4.4}, the desired result follows.
\end{proof}

\subsection*{Separable noether normalization}
We say that a ring $\Lambda$ has a \emph{separable noether normalization} if there exists a subring $A$ of $\cent \Lambda$ such that the following conditions holds:
\begin{enumerate}[{\quad\rm(i)}]
\item $A$ is noetherian and of finite global dimension;
\item $\Lambda$ is finitely generated as an $A$-module, thus a noether $A$-algebra;
\item $Q\otimes_{A}\Lambda$ is separable over $Q$; here $Q$ is the ring of fractions of $A$. 
\end{enumerate}

The next result  plays a crucial role in Section~\ref{se:commutative} where it is used to prove that the module categories of certain rings have generators, in the sense explained in next section. That in turn allows one to identify the full cohomology annihilator ideal, at least up to radical.

\begin{theorem}
\label{th:separable-noether-normalization}
If $\Lambda$ admits a separable noether normalization, then for $d=\dim\cent\Lambda$ the ideal $\cande {d+1}\Lambda$ of $\cent\Lambda$ is non-zero.
\end{theorem}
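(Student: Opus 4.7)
The plan is to combine Proposition~\ref{pr:noether-normalization} with Lemma~\ref{le:ndiff-separable}, using the regularity of $A$ to show that the relevant ideals contain non-zerodivisors. Fix a separable noether normalization $A \subseteq \cent\Lambda$, set $I = \ann_A \Ext^1_A(\Lambda,\syz_A\Lambda)$ and $e = \gldim A$. Proposition~\ref{pr:noether-normalization} gives
\[
I^{e}\cdot \ndiff A{\Lambda}\subseteq \cande {e+1}{\Lambda}.
\]
I would first identify $e$ with $d$. Since $\Lambda$ is module-finite over $A$, the subring $\cent\Lambda$ is also module-finite, hence integral, over $A$, so $\dim\cent\Lambda = \dim A$; condition (i) forces $A$ to be regular (by Serre), and for a regular noetherian ring $\gldim A = \dim A$. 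Hence $e = d$ and $\cande{e+1}{\Lambda} = \cande{d+1}{\Lambda}$.

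It then suffices to exhibit a non-zero element in $I^{e}\cdot \ndiff A{\Lambda}$. For $\ndiff A{\Lambda}$, Lemma~\ref{le:ndiff-separable} combined with condition (iii) supplies a non-zerodivisor $y$ of $\cent\Lambda$ lying in $\ndiff A{\Lambda}$. For $I$, I would argue that it must contain a non-zerodivisor of $A$ as follows: let $Q$ denote the total ring of fractions of $A$. Since $A$ is regular noetherian, $Q$ is a finite product of the residue fields at the minimal primes of $A$, and in particular $\gldim Q = 0$. Because $\Lambda$ is finitely presented over the noetherian ring $A$, localization commutes with Ext, yielding
\[
Q\otimes_A \Ext^1_A(\Lambda,\syz_A\Lambda)\cong \Ext^1_Q(Q\otimes_A\Lambda,\, Q\otimes_A\syz_A\Lambda) = 0.
\]
Since $\Ext^1_A(\Lambda,\syz_A\Lambda)$ is finitely generated over $A$, its annihilator $I$ must meet the multiplicative set of non-zerodivisors; pick such an $x \in I$.

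To conclude, observe that $x^{e}\cdot y \in I^{e}\cdot \ndiff A{\Lambda}\subseteq \cande{d+1}{\Lambda}$. Since $x$ is a non-zerodivisor of $A$, the power $x^{e}$ is non-zero in $A$, hence non-zero in the overring $\cent\Lambda$; since $y$ is a non-zerodivisor of $\cent\Lambda$, the product $x^{e}y$ is non-zero in $\cent\Lambda$. Thus $\cande{d+1}{\Lambda}\ne 0$. The only delicate step is producing the non-zerodivisor in $I$ via the semisimplicity of $Q$; the remainder is bookkeeping on top of the ingredients already assembled in this section.
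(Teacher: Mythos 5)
Your proof is correct and follows essentially the same route as the paper: it combines Proposition~\ref{pr:noether-normalization} with Lemma~\ref{le:ndiff-separable}, and uses the regularity of $A$ to see that $\Ext^{1}_{A}(\Lambda,\syz_{A}\Lambda)$ vanishes after inverting non-zerodivisors, so that $I$ contains a non-zerodivisor (the paper localizes at the associated primes of $A$ rather than at the whole total ring of fractions, which is the same argument). Your explicit verification that $\gldim A=\dim A=\dim\cent\Lambda$ just spells out a step the paper leaves implicit.
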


\begin{proof}
Let $A$ be a separable noether normalization of $\Lambda$ and set $I=\ann_{A}\Ext_A^{1}(\Lambda,\syz_{A}\Lambda)$.  Since $A$ is regular, for any associated prime ideal $\fp$ of $A$ the ring $A_{\fp}$ is a field, so that 
\[
\Ext_{A}^{1}(\Lambda,\syz_{A}\Lambda)_{\fp}\cong \Ext_{A_{\fp}}^{1}(\Lambda_{\fp},(\syz_{A}\Lambda)_{\fp}) = 0\,.
\]
Thus $I$ contains a non-zero element, even a non-zerodivisor, of $A$; this element will also be non-zero in $\cent\Lambda$, as $A$ is its subring. Since $\ndiff A{\Lambda}$ contains a non-zerodivisor of $\cent\Lambda$, by Lemma~\ref{le:ndiff-separable}, the desired result follows from Proposition~\ref{pr:noether-normalization}.
\end{proof}

As noted before, the results in this section extend work in \cites{PopescuRoczen90, Wang94, Wang99, Yoshino87}; please see \cite{IyengarTakahashi15}  for details, and further developments in this direction.

\section{Generators for $\mod\Lambda$}
\label{se:generators}

In the preceding section the noether different was used as a tool to find cohomology annihilators. In this section, we take a different tack; one that is inspired by Lemma~\ref{le:ca-module} that gives a method for finding annihilators of cohomology with respect to a single module. The idea then is to find a generator, say $G$, in a sense made precise below, for $\mod\Lambda$ and use the annihilator of $\Ext^{1}_{\Lambda}(G,\syz G)$ to find cohomology annihilators for all of $\mod\Lambda$.

The arguments involve a construction of an ascending chain of subcategories built out of a single module, introduced by Dao and Takahashi~\cite{DaoTakahashi11}. It is an analogue of a construction from Bondal and Van den Bergh~\cite{BondalVandenBergh03} for triangulated categories.

Throughout $\Lambda$ will be a noetherian ring.

\subsection*{Generation in $\mod \Lambda$}
Let $\mcX$ be a subcategory of $\mod\Lambda$. As usual, $\add \mcX$ will denote the subcategory of $\mod \Lambda$ consisting of direct summands of finite direct sums of copies of the modules in $\mcX$.

\begin{definition}
\label{de:building}
We consider an ascending chain of subcategories of $\mod\Lambda$ built out of $\mcX$ as follows: Set $|\mcX|_0:=\{0\}$ and $|\mcX|_{1}=\add \mcX$. For $n\ge2$, let $|\mcX|_n$ be the subcategory of $\mod \Lambda$ consisting of modules $M$ that fit into an exact sequence
\begin{equation}
\label{eq:building}
0 \lra Y \lra M\oplus W \lra X \lra 0
\end{equation}
with $Y$ in $|\mcX|_{n-1}$ and $X$ in $\add\mcX$; said otherwise, $M$ is a direct summand of a $\Lambda$-module $Z$ that admits a finite filtration $\{0\}=Z_{0}\subseteq Z_{1}\subseteq \cdots \subseteq Z_{n}=Z$ with sub-quotients $Z_{i+1}/Z_{i}$ in $\add\mcX$ for each $0\leq i<n$. Clearly, one gets a tower of subcategories  of $\mod \Lambda$:
\[
\{0 \}=|\mcX|_{0} \subseteq  \cdots \subseteq |\mcX|_{n} \subseteq  |\mcX|_{n+1} \subseteq \cdots  \,.
\]
\end{definition}

In what follows, the focus is building modules out of syzygies of a given module. With that in mind, given a subcategory $\mcX\subseteq \mod\Lambda$ and integer $s\ge 0$, we set
\[
\syz^{s}_{\Lambda}\mcX:=\{\syz^{s}_{\Lambda}M\mid M\in \mcX\} \quad\text{and}\quad 
 	\syz^{*}_{\Lambda}\mcX:=\bigcup_{s\ges 0}\syz^{s}_{\Lambda}\mcX\,,
\]
viewed as subcategories of $\mod\Lambda$. Since syzygies are only well-defined up to projective summands, it will be tacitly assumed that $\syz^{*}\mcX$ contains all finitely generated projective $\Lambda$-modules. When $\mcX$ consists of a single module, say $G$, we write $\syz^{s}_{\Lambda}G$ and $\syz^{*}_{\Lambda}G$.

\begin{lemma}
\label{le:mod-ca}
Let $G$ be a $\Lambda$-module and set $I=\ann_{\cent\Lambda}\Ext^{1}_{\Lambda}(G,\syz G)$. For each integer $n\ge 0$ and $\Lambda$-module $M$ in ${\sad G}_{n}$, one then has
\[
I^{n}\cdot \Ext^{\ges 1}_{\Lambda}(M,\Mod \Lambda) = 0\,.
\]
In particular, if $\syz^{s}_{\Lambda}(\mod \Lambda)\subseteq {\sad G}_{n}$ for some integer $s$, then $\cande {s+1}{\Lambda} \supseteq  I^{n}$.
\end{lemma}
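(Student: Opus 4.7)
The plan is to prove the first assertion by induction on $n$, and then derive the ``in particular'' statement as a one-line consequence by shifting the degree via syzygies.

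For the base case $n=0$, $|\syz^{*}G|_{0}=\{0\}$ and there is nothing to prove. For $n=1$ the key input is Lemma~\ref{le:ca-module} applied to $G$, which tells us that $I = \ann_{\cent\Lambda}\Ext^{1}_{\Lambda}(G,\syz G)$ already equals $\ann_{\cent\Lambda}\Ext^{\ges 1}_{\Lambda}(G,\Mod\Lambda)$. Combining this with the standard isomorphism $\Ext^{i}_{\Lambda}(\syz^{k} G,N)\cong \Ext^{i+k}_{\Lambda}(G,N)$ for $i\ge 1$ (Remark~\ref{re:ext-and-syzygy}), one sees that $I$ annihilates $\Ext^{\ges 1}_{\Lambda}(M',N)$ for every $M'\in\syz^{*}G$ and every $N\in\Mod\Lambda$. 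The class $|\syz^{*}G|_{1}=\add(\syz^{*}G)$ consists of summands of finite direct sums of such modules, and annihilation of Ext passes through direct sums and summands, so the base case holds (noting that projective summands, which are automatically in $\syz^{*}G$ by convention, contribute zero).

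For the inductive step, let $M\in |\syz^{*}G|_{n}$ with $n\ge 2$. By definition there is a short exact sequence
\[
0\lra Y\lra M\oplus W\lra X\lra 0
\]
with $Y\in |\syz^{*}G|_{n-1}$ and $X\in \add(\syz^{*}G)=|\syz^{*}G|_{1}$. Fix $N\in\Mod\Lambda$ and $i\ge 1$, and apply $\Hom_{\Lambda}(-,N)$ to obtain the long exact sequence of $\cent\Lambda$-modules
\[
\Ext^{i}_{\Lambda}(X,N)\lra \Ext^{i}_{\Lambda}(M\oplus W,N)\lra \Ext^{i}_{\Lambda}(Y,N).
\]
The base case gives $I\cdot \Ext^{i}_{\Lambda}(X,N)=0$, and the inductive hypothesis gives $I^{n-1}\cdot \Ext^{i}_{\Lambda}(Y,N)=0$. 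Therefore $I^{n}$ annihilates the middle term, and in particular its direct summand $\Ext^{i}_{\Lambda}(M,N)$, completing the induction.

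For the final assertion, suppose $\syz^{s}_{\Lambda}(\mod\Lambda)\subseteq |\syz^{*}G|_{n}$. Given any $M'\in \mod\Lambda$, the module $\syz^{s}_{\Lambda}M'$ lies in $|\syz^{*}G|_{n}$, so the first part of the lemma yields $I^{n}\cdot \Ext^{\ges 1}_{\Lambda}(\syz^{s}_{\Lambda}M',N)=0$ for every $N$. Using the isomorphism $\Ext^{i}_{\Lambda}(\syz^{s}_{\Lambda}M',N)\cong \Ext^{i+s}_{\Lambda}(M',N)$ for $i\ge 1$ (again Remark~\ref{re:ext-and-syzygy}), this says $I^{n}\cdot \Ext^{\ges s+1}_{\Lambda}(M',N)=0$. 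Taking the intersection over all $M',N\in\mod\Lambda$ yields $I^{n}\subseteq \cande{s+1}{\Lambda}$. I do not anticipate any real obstacle; the only delicate point is keeping track of the fact that the base case already exploits Lemma~\ref{le:ca-module} to upgrade annihilation of a single $\Ext^{1}$ to annihilation of all $\Ext^{\ges 1}$, so that each step of the induction consumes only one factor of $I$.
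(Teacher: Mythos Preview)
Your proof is correct and follows essentially the same approach as the paper: induction on $n$, with the base case resting on Lemma~\ref{le:ca-module} and Remark~\ref{re:ext-and-syzygy}, and the inductive step obtained by applying $\Ext^{i}_{\Lambda}(-,N)$ to the defining extension in \eqref{eq:building}. The final statement is deduced in both cases by the standard degree shift $\Ext^{i}_{\Lambda}(\syz^{s}M,N)\cong\Ext^{s+i}_{\Lambda}(M,N)$.
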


\begin{proof}
We induce on $n$. The basis step $n=1$ follows from Remark~\ref{re:ext-and-syzygy} and Lemma~\ref{le:ca-module}, for they yield that $I$ annihilates $\Ext^{\ges 1}_{\Lambda}(\syz^{i}_{\Lambda}G,\Mod\Lambda)$ for any $i$, and so also  $\Ext^{\ges 1}_{\Lambda}(M,\Mod\Lambda)$, for any $M$ in ${\sad G}_{1}$.

Assume $n\ge 2$ and that the desired conclusion holds for all integers less than $n$. Then, for any $i\ge 1$, applying $\Ext^{i}_{\Lambda}(-,N)$ to the sequence \eqref{eq:building} defining $M$ yields an exact sequence 
\[
\Ext^{i}_{\Lambda}(X,N) \lra \Ext^{i}_{\Lambda}(M,N) \oplus \Ext^{i}_{\Lambda}(W,N) \lra \Ext^{i}_{\Lambda}(Y,N)
\]
of $\cent\Lambda$-modules. By the induction hypotheses, $I^{n-1}$ annihilates the module on the right, while $I$ annihilates the one on the left. The exactness of the sequence above implies $I^{n}$ annihilates $\Ext^{i}_{\Lambda}(M,N)$ as desired.

Suppose that $\syz^{s}_{\Lambda}(\mod \Lambda)\subseteq {\sad G}_{n}$ and fix $M$ and $N$ in $\mod \Lambda$. Then $\syz^{s}M$ is in ${\sad G}_{n}$,  so it follows from the already established part of the result that, for each integer $i\ge 1$, one has the second equality below
\[
I^{n}\Ext^{s+i}(M,N) = I^{n}\Ext^{i}(\syz^{s}M,N) =0\,.
\]
The first one is standard; see Remark~\ref{re:ext-and-syzygy}.
\end{proof}

Concerning the hypothesis in the preceding result, we note that there is an equality ${\sad\mcX}_{n}=[\mcX]_n$, where the subcategory $[\mcX]_n$ has been introduced in \cite{DaoTakahashi11}.

\subsection*{Finitistic global dimension}
We introduce the \emph{finitistic global dimension} of a noether algebra $\Lambda$ as the number
\[
\sup\{\gldim\Lambda_{\fp}\mid \fp\in\Reg\Lambda\}\,.
\]
This can be infinite, as is the case for any (commutative noetherian) regular ring of infinite Krull dimension; Nagata~\cite{Nagata62}*{Appendix, Example 1} has constructed such examples.

The result below sums up the discussion in this section up to this point.

\begin{theorem}
\label{th:generator-sing}
Let $\Lambda$ be a noether algebra whose finitistic global dimension is at most $d$. If there exists a $G$ in $\mod \Lambda$ and a non-negative integer $s$ such that $\syz^{s}_{\Lambda}(\mod \Lambda)\subseteq {\sad G}_{n}$ for some $n\ge 0$, then  for $I=\ann_{\cent\Lambda}\Ext^{d+1}_{\Lambda}(G,\syz^{d+1}_{\Lambda} G)$, there are equalities
\[
\Sing\Lambda = \mcV(\can{\Lambda}) = \mcV(\cande {s+d+1}{\Lambda})  = \mcV(I)\,,
\]
In particular, $\Sing\Lambda$ is a closed subset of $\Spec\cent\Lambda$.
\end{theorem}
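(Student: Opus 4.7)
The plan is to establish the cyclic chain of inclusions
\[
\Sing\Lambda \ \subseteq\ \mcV(\can\Lambda)\ \subseteq\ \mcV(\cande{s+d+1}{\Lambda})\ \subseteq\ \mcV(I)\ \subseteq\ \Sing\Lambda,
\]
after which all four sets must coincide. The first inclusion is exactly Lemma~\ref{le:ca-localize}(2), and the second is immediate from $\cande{s+d+1}{\Lambda} \subseteq \can\Lambda$ together with the fact that $\mcV$ is inclusion-reversing.

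For the third inclusion I would show the stronger statement $I^{n} \subseteq \cande{s+d+1}{\Lambda}$. First, Lemma~\ref{le:ca-module} identifies $I$ with $\ann_{\cent\Lambda}\Ext^{\ges d+1}_\Lambda(G,\Mod\Lambda)$, so $I$ annihilates $\Ext^{d+1}_\Lambda(X,N)$ for every $X \in \add G$ and every $N \in \Mod\Lambda$. Given $M \in \mod\Lambda$, the hypothesis $\syz^{s}_\Lambda M \in |G|_{n}$ delivers a length-$n$ filtration $0 = Z_0 \subseteq \cdots \subseteq Z_n = Z$ with $Z_{i+1}/Z_{i} \in \add G$ and $\syz^{s}_\Lambda M$ a direct summand of $Z$. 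Inducting up this filtration via the long exact sequence of $\Ext^{d+1}_\Lambda(-,N)$ attached to $0 \to Z_i \to Z_{i+1} \to Z_{i+1}/Z_i \to 0$ picks up one factor of $I$ at each step (the kernel and cokernel at stage $i+1$ sit between two $\Ext^{d+1}$ and $\Ext^{d+2}$ modules over $\add G$, both killed by $I$), so $I^n$ kills $\Ext^{d+1}_\Lambda(\syz^{s}_\Lambda M,N) \cong \Ext^{s+d+1}_\Lambda(M,N)$ by the degree-shift isomorphism of Remark~\ref{re:ext-and-syzygy}. In effect this is the argument of Lemma~\ref{le:mod-ca}, transposed upward by $d$ degrees.

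The final inclusion $\mcV(I) \subseteq \Sing\Lambda$ is where the finitistic global dimension hypothesis enters. For $\fp \in \Reg\Lambda$, the bound $\gldim\Lambda_\fp \le d$ forces the localized Ext module $\Ext^{d+1}_\Lambda(G,\syz^{d+1}_\Lambda G)_\fp$ to vanish, using that localization is flat and hence compatible with forming syzygies up to projective summands. Since this Ext module is finitely generated over the noetherian ring $\cent\Lambda$, some $u \in \cent\Lambda \setminus \fp$ must annihilate it, placing $u$ in $I \setminus \fp$, so $\fp \notin \mcV(I)$. The ``in particular'' assertion is then automatic, since $\mcV(I)$ is Zariski closed by definition. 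No step looks seriously hard; the main bookkeeping is to track the degree shift carefully so that the building filtration of length $n$ and the input from $\gldim \le d$ combine to hit exactly the exponent $s+d+1$ claimed in the conclusion.
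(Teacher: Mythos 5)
Your argument is essentially the paper's: the same cycle of four inclusions, the same appeal to Lemma~\ref{le:ca-localize}(2) for the first one, and the same localization argument (finite generation of the Ext module over $\cent\Lambda$ plus $\gldim\Lambda_{\fp}\le d$ on $\Reg\Lambda$) for $\mcV(I)\subseteq\Sing\Lambda$. The only difference in the middle step is a reshuffling: the paper shifts the generation hypothesis by $d$ syzygies via Remark~\ref{re:syzygies} and then applies Lemma~\ref{le:mod-ca} to $\syz^{d}_{\Lambda}G$ in cohomological degree $1$, whereas you run the same filtration induction directly in degree $d+1$ on $\syz^{s}_{\Lambda}M$; the computation is identical and both give $I^{n}\subseteq\cande{s+d+1}{\Lambda}$. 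One point to correct: the hypothesis of the theorem is $\syz^{s}_{\Lambda}(\mod\Lambda)\subseteq{\sad G}_{n}=|\syz^{*}_{\Lambda}G|_{n}$, not $|G|_{n}$, so the subquotients $Z_{i+1}/Z_{i}$ of your filtration lie in $\add(\syz^{*}_{\Lambda}G)$ (with projectives tacitly included), not merely in $\add G$. Your own identification $I=\ann_{\cent\Lambda}\Ext^{\ges d+1}_{\Lambda}(G,\Mod\Lambda)$ closes this gap in one line: $\Ext^{d+1}_{\Lambda}(\syz^{i}_{\Lambda}G,N)\cong\Ext^{d+1+i}_{\Lambda}(G,N)$ is annihilated by $I$ for every $i\ge 0$, and $\Ext^{d+1}_{\Lambda}(P,N)=0$ for $P$ projective, so the induction up the filtration works with the correct class of subquotients and the rest of your argument stands unchanged.
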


\begin{proof} Using Remark \ref{re:syzygies}, it is easy to verify that the hypothesis on $G$ yields
\[
\syz^{s+l}(\mod \Lambda)\subseteq |\syz^{*}(\syz^{l}G)|_{n}\quad\text{for any $l\ge 0$.}
\]
Noting that $\Ext^{1}_{\Lambda}(\syz^{d}G,\syz(\syz^{d}G))\cong \Ext^{d+1}_{\Lambda}(G,\syz^{d+1}G)$ as $\cent\Lambda$-modules,  Lemma~\ref{le:mod-ca}, applied to $\syz^{d}G$, thus yields the last inclusion below
\[
\Sing \Lambda \subseteq \mcV(\can{\Lambda})\subseteq \mcV(\cande{s+d+1}{\Lambda})\subseteq  \mcV(I)\,.
\]
The first one is from Lemma~\ref{le:ca-localize}(2) while the second one is by definition of the ideals in question. It thus remains to verify that $\mcV(I)\subseteq \Sing \Lambda$, that is to say that $I\not\subseteq \fp$ for any $\fp$ in $\Reg\Lambda$. Recall that for any finitely generated module $E$ over a commutative ring $R$ and $\fp$ in $\Spec R$, one has $E_{\fp}=0$ if and only if $\ann_{R}E \not\subseteq\fp$. Thus, the desired conclusion follows because for any $\fp$ in $\Reg\Lambda$ there are isomorphisms
\[
\Ext^{d+1}_{\Lambda}(G,\syz^{d+1} G)_{\fp}  \cong \Ext^{d+1}_{\Lambda_{\fp}}(G_{\fp},(\syz^{d+1} G)_{\fp}) \cong 0\,,
\]
where the last one holds because $\gldim\Lambda_{\fp}\le d$, by hypotheses.
\end{proof}

\begin{remark}
The proof of Theorem~\ref{th:generator-sing} gives a more precise result: If $G$ can be chosen to be an $i$th syzygy module, for some $i\ge 0$, then $\mcV(\can{\Lambda}) = \mcV(\cande {s+d-i+1}{\Lambda})$.
\end{remark}

\subsection*{Strong generators for $\mod\Lambda$}
Let $\Lambda$ be a noetherian ring. We say that a finitely generated $\Lambda$-module $G$ is a \emph{strong generator} for $\mod\Lambda$ if the following conditions holds:
\begin{enumerate}
\item $\Lambda$ is a direct summand of $G$, and
\item there exist non-negative integers $s$ and $n$ such that $\syz^{s}_{\Lambda}(\mod \Lambda)\subseteq |G|_{n}$.
\end{enumerate}
The first condition says that $G$ is a generator for $\mod\Lambda$ in the usual sense of the word. This definition is motivated by later considerations, especially the results on Section~\ref{se:commutative}. Observe that the criterion for $G$ to be a strong generator is stronger than the conclusion of Theorem~\ref{th:generator-sing}, for it does not allow for the syzygies of $G$.

We now reconcile this notion with one based on thick subcategories of $\mod\Lambda$.

Let $\mcX$ be a subcategory of $\mod\Lambda$. Set $\thick^{0}(\mcX):=\{0\}$ and $\thick^1(\mcX):=\add\mcX$.  For $n\ge2$ let $\thick^n(\mcX)$ be the subcategory of $\mod \Lambda$ consisting of direct summands of any module that appears in an exact sequence
\[
0 \to X \to Y \to Z \to 0
\]
such that, among the other two modules, one is in $\thick_{\Lambda}^{n-1}(\mcX)$ and the other is in $\add\mcX$.

\begin{proposition}
\label{pr:strong-generation}
Let $\mcX$ and $\mcY$ be subcategories of $\mod\Lambda$. The following statements hold for each integer $n\ge1 $.
\begin{enumerate}[{\quad\rm(1)}]
\item
$|\mcX|_{n}\subseteq \thick^{n}(\mcX)$.
\item
$\syz^{n-1}_{\Lambda}(\thick^{n}(\mcX))\subseteq |\bigcup_{i=0}^{2(n-1)}\syz^{i}_{\Lambda}\mcX|_{n}$.
\item
If $\syz^{s}_{\Lambda}(\mcY)\subseteq \thick^{n}(\mcX)$ for some integer $s\ge 1$, then $\mcY\subseteq \thick^{n+s}(\mcX\cup\{\Lambda\})$.
\end{enumerate}
\end{proposition}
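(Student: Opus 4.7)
I would handle the three parts in turn, inducting on $n$ for the first two.

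Part (1) is a direct induction: the base cases $n=0,1$ are trivial, and the defining sequence $0\to Y\to M\oplus W\to X\to 0$ for $M\in |\mcX|_n$ with $Y\in |\mcX|_{n-1}$ and $X\in \add\mcX$ already has exactly the form used to build $\thick^n(\mcX)$, because $Y$ lies in $\thick^{n-1}(\mcX)$ by induction and taking summands is allowed on both sides.

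For Part (2) I would first record two preliminary facts. \emph{(a)} For every subcategory $\mcC\subseteq\mod\Lambda$ one has $\syz|\mcC|_m\subseteq|\syz\mcC|_m$, proved by induction on $m$ using Remark~\ref{re:syzygies}. \emph{(b)} A ``flip'' lemma: if $0\to X\to Y\to Z\to 0$ with $X\in\add\mcC$ and $Z\in|\mcC|_{n-1}$, then $Y\in|\mcC|_n$; to see this, pick $Z''$ so that $Z\oplus Z''$ admits a length-$(n-1)$ filtration with subquotients in $\add\mcC$, form $0\to X\to Y\oplus Z''\to Z\oplus Z''\to 0$, and pull the filtration back along the surjection to a length-$n$ filtration of $Y\oplus Z''$ by prepending $X$. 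Now for the induction step, let $M\in\thick^n(\mcX)$, a summand of $Y$ in $0\to A\to Y\to B\to 0$ with one of $A,B$ in $\thick^{n-1}(\mcX)$ and the other in $\add\mcX$. Applying $\syz^{n-1}$ via Remark~\ref{re:syzygies}, and invoking the inductive hypothesis together with (a), the $(n-1)$st syzygy of the $\thick^{n-1}$-term lies in $|\bigcup_{i=1}^{2n-3}\syz^i\mcX|_{n-1}$, while the $(n-1)$st syzygy of the $\add\mcX$-term lies in $\add\syz^{n-1}\mcX$. Setting $\mcC=\bigcup_{i=0}^{2(n-1)}\syz^i\mcX$, these memberships combine—via the definition of $|\cdot|_n$ when the $|\mcC|_{n-1}$ factor sits on the left, and via (b) when it sits on the right—to place $\syz^{n-1}Y\in |\mcC|_n$. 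Finally $\syz^{n-1}M$ is a summand of $\syz^{n-1}Y$ up to projectives, which are absorbed by the standing convention $\proj\Lambda\subseteq\syz^*\mcX$.

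Part (3) iterates the short exact sequences $0\to\syz^{k+1}M\to P_k\to\syz^kM\to 0$, in which $P_k$ is finitely generated projective and hence in $\add\Lambda\subseteq\thick^1(\mcX\cup\{\Lambda\})$. Starting from the hypothesis $\syz^sM\in\thick^n(\mcX)\subseteq\thick^n(\mcX\cup\{\Lambda\})$ for $M\in\mcY$, each sequence promotes membership from $\thick^{n+s-k-1}(\mcX\cup\{\Lambda\})$ at level $k+1$ to $\thick^{n+s-k}(\mcX\cup\{\Lambda\})$ at level $k$ by the defining recursion of $\thick^{m}$; after $s$ applications one obtains $M\in\thick^{n+s}(\mcX\cup\{\Lambda\})$.

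The main obstacle is the asymmetry in (2): $|\mcX|_n$ insists on $\add\mcX$ on the right and $|\cdot|_{n-1}$ on the left, whereas $\thick^n(\mcX)$ is symmetric in the two ends of the defining short exact sequence. The flip lemma (b) is what reconciles both configurations without forcing us to consume an extra syzygy, and the width $2(n-1)$ in the target is exactly what the bookkeeping yields when one advances the inductive hypothesis by a single syzygy before combining it with the $\add\syz^{n-1}\mcX$ contribution.
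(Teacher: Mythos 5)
Parts (1) and (3) of your proposal are correct and essentially the paper's own arguments, and your preliminary facts (a) and (b) for part (2) are sound: (b) is immediate from the filtration description of $|\mcX|_{n}$ given in Definition~\ref{de:building}, which the paper also uses tacitly. The genuine gap is in your induction step for (2): you only treat modules of $\thick^{n}(\mcX)$ that arise as direct summands of the \emph{middle} term of an exact sequence $0\to A\to Y\to B\to 0$ whose outer terms are the controlled ones. But $\thick^{n}(\mcX)$ consists of summands of \emph{any} module appearing in such a sequence, so the new module may just as well be (a summand of) the kernel or the cokernel, with the other two terms being the one in $\thick^{n-1}(\mcX)$ and the one in $\add\mcX$. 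Your flip lemma does not cover these two configurations, and they do not reduce to the extension case for free: one must rotate the sequence through a projective presentation --- from $0\to W\to A\to B\to 0$ pass to $0\to \syz_{\Lambda}B\to W\oplus P\to A\to 0$, and from $0\to A\to B\to W\to 0$ to $0\to \syz_{\Lambda}B\to \syz_{\Lambda}W\oplus P\to A\oplus Q\to 0$ with $P,Q$ projective --- and this costs an extra syzygy of the $\thick^{n-1}$-term: after applying $\syz^{n-1}$ one needs its $n$th syzygy, two shifts beyond the inductive hypothesis, which is exactly where the full range of indices $0\le i\le 2(n-1)$ in the target is used. Your middle-term bookkeeping only ever needs indices up to $2n-3$, a sign that part of the definition is not being exercised; as written, the inclusion in (2) is not established for all of $\thick^{n}(\mcX)$. (Note, ironically, that your own proof of (3) uses precisely this generality: there the module being promoted is the \emph{cokernel} of the sequence $0\to\syz^{k+1}M\to P_{k}\to\syz^{k}M\to 0$.)

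This is exactly how the paper argues: its proof of (2) runs a case analysis over the three positions of the new module $W$ (kernel, middle term, cokernel), for both distributions of the $\thick$- and $\add\mcX$-terms, using the rotated sequences above together with Remark~\ref{re:syzygies}. If you add those cases, your facts (a) and (b) are the right tools to finish, and the remainder of your argument stands.
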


\begin{proof}
The inclusion in (1) is immediate from definitions, as is (3) for when $\syz^{s}(M)$ is in $\thick^{n}(\mcX)$, it follows from the exact sequence 
defining the syzygy module:
\[
0\lra \syz^{s}M\lra P_{s-1}\lra \cdots \lra P_{0}\lra M\lra 0\,,
\]
with each $P_{i}$ a finitely generated projective, that $M$ is in $\thick^{n+s}(\mcX\cup\{\Lambda\})$.

(2) For each  $n\ge 0$, set $\mcC_{n}=\bigcup_{i=0}^{n}\syz^{i}\mcX$. The desired statement is that  
\[
\syz^{n-1}(\thick^{n}(\mcX)) \subseteq |\mcC_{2(n-1)}|_{n} \quad\text{for each $n\ge 1$.}
\]
We verify this by an induction on $n$.  The base case $n=1$  is clear, for both $\thick^{1}(\mcX)$ and $|\mcC_{0}|_{1}$ are $\add(\mcX)$. Assume that the  inclusion holds for some $n\ge 1$, and for every subcategory $\mcX$ of $\mod \Lambda$.

Since both $\thick^{n+1}(\mcX)$ and $\mcC_{n+1}$ are closed under direct summands, it suffices to verify that given $X$ and $Y$ in $\mod\Lambda$ with one in $\thick^{1}(\mcX)$ and the other in $\thick^{n}(\mcX)$, and an exact sequence of $\Lambda$-modules of one of the following types:
\begin{gather*}
0\lra W\lra X \lra Y\lra 0  \tag{i} \\
0\lra X\lra W \lra Y\lra 0  \tag{ii} \\
0\lra X \lra Y \lra W\lra 0  \tag{iii}
\end{gather*}
the $\Lambda$-module $\syz^{n}W$ is in $|\mcC_{2n}|_{n+1}$. This can be verified by a direct case-by-case analysis; there are six cases to consider, depending on whether $X$ is in $\thick^{1}(\mcX)$ or in $\thick^{n}(\mcX)$. 

We do this when $X$ is in $\thick^{1}(\mcX)$ and $Y$ is in $\thick^{n}(\mcX)$; the argument in the other case is  analogous. By the induction hypothesis $X$ is in $|\mcC_{0}|_{1}$ and $\syz^{n-1}Y$ is in $|\mcC_{2(n-1)}|_{n}$, and then it is easy to verify that
\[
\syz^{n-1}X, \syz^{n}X\in |\mcC_{2n}|_{1} \quad\text{and}\quad
\syz^{n}Y, \syz^{n+1}Y\in |\mcC_{2n}|_{n}\,.
\]
These remarks  will be used without further comments in what follows.

\medskip

Case (i): The exact sequence (i) gives rise to an exact sequence
\[
0\lra \syz Y\lra W\oplus P \lra X\lra 0
\]
for some finitely generative projective module $P$. Since $\syz^{n}X$ is in $|\mcC_{2n}|_{1}$ and $\syz^{n+1} Y$ is in $ |\mcC_{2n}|_{n}$ the exact sequence above yields that $\syz^{n}W$ is in $|\mcC_{2n}|_{n+1}$, as desired.

\medskip

Case (ii): Since $\syz^{n}X$ is in  $|\mcC_{2n}|_{1}$ and $\syz^{n}Y$ is in  $|\mcC_{2n}|_{n}$, it follows from the exact sequence  (ii) that $\syz^{n}W$ is in $|\mcC_{2n}|_{n+1}$.

\medskip

Case (iii): The exact sequence (iii) gives rise to an exact sequence
\[
0\lra \syz Y \lra \syz W  \lra X \oplus P \lra 0
\]
where $P$ is a finitely generated projective module. Since $\syz^{n-1}X$ is in  $|\mcC_{2n}|_{1}$ and $\syz^{n}Y$ is in  $|\mcC_{2n}|_{n}$, the desired result holds.
\end{proof}

The following result is an immediate consequence of Proposition~\ref{pr:strong-generation}.

\begin{corollary}
\label{co:strong-generation}
Let $G$ be a finitely generated $\Lambda$-module.
\begin{enumerate}[{\quad\rm(1)}]
\item 
If  $\syz^{s}(\mod\Lambda)\subseteq |G|_{n}$ for some positive integers $s,n$, then $\mod\Lambda = \thick^{s+n}(G\oplus\Lambda)$.
\item
If  $\mod\Lambda=\thick^{n}(G)$ for an integer $n\ge1$, then $\syz^{n-1}(\mod\Lambda)\subseteq |\bigcup_{i=0}^{2(n-1)}\syz^{i}G|$. \qed
\end{enumerate}
\end{corollary}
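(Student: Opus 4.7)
The plan is that both statements are direct combinations of the three assertions in Proposition~\ref{pr:strong-generation} applied to the singleton $\mcX=\{G\}$, so the work is really just chaining the inclusions and identifying $|\{G\}|_{n}$ with $|G|_{n}$ and $\thick^{n}(\{G\})$ with $\thick^{n}(G)$.

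For part (1), I would start from the hypothesis $\syz^{s}(\mod\Lambda)\subseteq |G|_{n}$ and use Proposition~\ref{pr:strong-generation}(1) with $\mcX=\{G\}$ to pass to the inclusion $\syz^{s}(\mod\Lambda)\subseteq\thick^{n}(G)$. Then Proposition~\ref{pr:strong-generation}(3), applied with $\mcY=\mod\Lambda$, $\mcX=\{G\}$ and the same $s$, yields $\mod\Lambda\subseteq\thick^{n+s}(\{G\}\cup\{\Lambda\})=\thick^{n+s}(G\oplus\Lambda)$, which is the desired equality (the reverse inclusion being automatic). A small verification point is that $\thick^{n+s}(\{G\}\cup\{\Lambda\})$ and $\thick^{n+s}(G\oplus\Lambda)$ coincide, but this is immediate from the fact that each $\thick^{k}$ is closed under $\add$ of its inputs.

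For part (2), I would simply specialize Proposition~\ref{pr:strong-generation}(2) to $\mcX=\{G\}$: the hypothesis $\mod\Lambda=\thick^{n}(G)$ gives
\[
\syz^{n-1}(\mod\Lambda)=\syz^{n-1}(\thick^{n}(\{G\}))\subseteq\Bigl|\bigcup_{i=0}^{2(n-1)}\syz^{i}G\Bigr|_{n}\,,
\]
which is the stated containment.

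Since these two parts are literal specializations (combined, in (1), with the elementary step of replacing the set $\{G,\Lambda\}$ by the single module $G\oplus\Lambda$), I do not anticipate any genuine obstacle; the only thing to keep track of is that syzygies are only defined up to projective summands, which is why the summand $\Lambda$ must be absorbed in part (1) but causes no harm because projectives are already in every $\thick^{k}$ once $\Lambda$ is among the generators.
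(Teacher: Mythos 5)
Your proposal is correct and is exactly the paper's argument: the corollary is stated there as an immediate consequence of Proposition~\ref{pr:strong-generation}, obtained by specializing its three parts to $\mcX=\{G\}$ (and absorbing $\{G,\Lambda\}$ into $G\oplus\Lambda$ via $\add$), just as you do. No gaps to report.
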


\begin{remark}
\label{re:generation}
The import of the preceding result is that $\mod\Lambda$ has a strong generator if and only if there exists a finitely generated module such that the thick subcategory it generates is all of $\mod\Lambda$. The latter condition is akin to the one for a strong generator of the  bounded derived category, $\dbcat(\mod \Lambda)$, as a triangulated category. However, for our applications the notion of a strong generator adopted here is the better one, for it distinguishes between a module and its syzygy. This also suggest that for applications to module theory it would be useful to investigate the set of pairs $(s,n)$ of integers for which there exists a $G$ in $\mod\Lambda$ with $\syz^{s}(\mod\Lambda)\subseteq |G|_{n}$. This is a two parameter version of the Orlov spectrum of a triangulated category; see \cite{Orlov09}*{Definition~3}.
\end{remark}

\subsection*{A compactness argument}
Let $\mcX$ be a subcategory of $\mod\Lambda$. Let $\Add{\mcX}$ denote the subcategory of $\Mod \Lambda$ consisting of direct summands of arbitrary direct sums of copies of the modules in $\mcX$. Following the construction in Definition~\ref{de:building}, one gets a tower 
\[
\{0\} = |\Add{\mcX}|_{0} \subseteq\cdots  \subseteq |\Add{\mcX}|_{n} \subseteq |\Add{\mcX}|_{n+1} \subseteq \cdots
\]
of subcategories of $\Mod \Lambda$.  The result below is  a module theoretic version of~\cite{BondalVandenBergh03}*{Proposition 2.2.4}. It will be used in the sequel to prove that rings essentially of finite type over a field have strong generators.

\begin{lemma}
\label{le:module-building}
Let $\Lambda$ be a noetherian ring and $\mcX\subseteq\mod \Lambda$ a subcategory. For each integer $n\ge 1$ there is an equality
$|\Add{\mcX}|_n\,\cap\,\mod \Lambda=|\mcX|_n$.
\end{lemma}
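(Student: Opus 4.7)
Both inclusions are proved by induction on $n$. The inclusion $|\mcX|_n\subseteq |\Add{\mcX}|_n\cap \mod\Lambda$ is immediate: $\add\mcX\subseteq\Add{\mcX}$, so any exact sequence witnessing membership in $|\mcX|_n$ also witnesses membership in $|\Add{\mcX}|_n$, and $|\mcX|_n\subseteq\mod\Lambda$ by construction.

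The base case $n=1$ of the reverse inclusion is the classical compactness argument: if $M\in\mod\Lambda$ is a summand of $N\in\Add{\mcX}$, then $M$ is a summand of some $\bigoplus_{i\in I}X_i$ with $X_i\in\mcX$; since $M$ is finitely generated, the inclusion $M\hookrightarrow \bigoplus_iX_i$ factors through a finite subsum $\bigoplus_{i\in J}X_i\in\add\mcX$, and restricting the retraction to this subsum places $M$ in $\add\mcX=|\mcX|_1$.

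For the step from $n$ to $n+1$, let $M\in\mod\Lambda\cap |\Add{\mcX}|_{n+1}$ and choose a short exact sequence $0\to Y\to M\oplus W\to X\to 0$ with $Y\in |\Add{\mcX}|_n$, $X\in\Add{\mcX}$, $W\in\Mod\Lambda$. The plan is to transform this into a sequence with $X'\in\add\mcX$, $W'\in\mod\Lambda$, and $Y'\in |\Add{\mcX}|_n\cap\mod\Lambda$; the inductive hypothesis then provides $Y'\in |\mcX|_n$, and the new sequence places $M$ in $|\mcX|_{n+1}$. To replace $X$ by $X'$, write $X$ as a summand of $\bigoplus_i X_i$ with $X_i\in\mcX$: the image of $M$ is finitely generated and hence contained in a finite subsum $X'\in \add\mcX$, and pulling the sequence back first along $\bigoplus_i X_i\twoheadrightarrow X$ and then along $X'\hookrightarrow \bigoplus_i X_i$ delivers a sequence $0\to Y\to M\oplus\widetilde W\to X'\to 0$ with $M$ still a summand of the middle term. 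Next, since $X'$ is finitely generated, a suitable finitely generated $W'\subseteq \widetilde W$ keeps $M\oplus W'\twoheadrightarrow X'$ surjective, and the resulting kernel $Y'$ is finitely generated by noetherianity of $\Lambda$.

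The principal obstacle is ensuring $Y'\in |\Add{\mcX}|_n$; this fails for an arbitrary finitely generated $W'$ since $|\Add{\mcX}|_n$ is not closed under finitely generated submodules in general (for instance $\mathbb{Z}/2\hookrightarrow\mathbb{Z}/4\in\Add\{\mathbb{Z}/4\}$ although $\mathbb{Z}/2\notin\Add\{\mathbb{Z}/4\}$). I would handle this by fixing a filtration $0=Y_0\subseteq Y_1\subseteq\cdots\subseteq Y_n=Y$ realizing $Y\in|\Add{\mcX}|_n$ (after absorbing, if necessary, the complement of $Y$ in its filtered enlargement into $\widetilde W$), and then choosing $W'$ in $n$ successive stages, one per filtration level: at stage $i$, having fixed the previous enlargements, I would add to $W'$ enough elements obtained from lifts of generators of a suitable finite sub-direct-sum of the presentation of $Y_{i+1}/Y_i\in\Add{\mcX}$ so that the induced subquotient $(Y'\cap Y_{i+1})/(Y'\cap Y_i)$ itself lies in $\Add{\mcX}$, an application of the base-case compactness at each level. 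After all $n$ stages, the filtration $(Y'\cap Y_i)_{i}$ of $Y'$ witnesses $Y'\in |\Add{\mcX}|_n$, and the induction closes.
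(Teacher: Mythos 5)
The first half of your induction step is fine: the easy inclusion, the case $n=1$, and the replacement of $X\in\Add{\mcX}$ by a finite subsum $X'\in\add\mcX$ with $M$ still a direct summand of the middle term all work. The genuine gap is exactly at the point you flag and then try to dispose of by the stage-wise enlargement: you must produce a finitely generated $W'\subseteq\widetilde W$ with $M\oplus W'\to X'$ surjective whose kernel $Y'=Y\cap(M\oplus W')$ lies in $|\Add{\mcX}|_n$, and the mechanism you describe does not achieve this. First, $(Y'\cap Y_{i+1})/(Y'\cap Y_i)$ is merely a finitely generated \emph{submodule} of $Y_{i+1}/Y_i\in\Add{\mcX}$; ``base-case compactness'' applies to finitely generated direct summands, not to submodules --- precisely the $\mathbb{Z}/2\subset\mathbb{Z}/4$ phenomenon you yourself mention. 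Moreover, when $Y_{i+1}/Y_i$ is a proper summand of $\bigoplus_j X_j$, a finite subsum is not a submodule of $Y_{i+1}/Y_i$, so its generators admit no lifts to $Y$; what you can lift is only its image under the splitting idempotent, which is a quotient of a module in $\add\mcX$ and need not lie in $\Add{\mcX}$ at all. Second, the stages are not independent: enlarging $W'$ changes $Y'=Y\cap(M\oplus W')$ globally, hence changes $Y'\cap Y_j$ at \emph{every} level $j$, so levels ``fixed'' at earlier stages can be destroyed later; and even within one level, the enlarged intersection is an intersection you do not control, so its image may stick out of the finite subsum you targeted, with no argument that the process terminates. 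The central claim of your construction is therefore unsubstantiated, and I do not see how to run the induction in this form.

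The paper takes a route that never requires a kernel to stay in $|\Add{\mcX}|_n$, and that is the fix I would suggest. Working with the filtration-plus-summand characterization of $|\Add{\mcX}|_n$ rather than the two-term exact sequence, it proves (following Rouquier) a factorization claim: any homomorphism $\vf\colon M\to Z$ from a finitely generated module to a module $Z$ with an $n$-step filtration with subquotients in $\Add{\mcX}$ factors as $M\to W\to Z$, where $W$ is finitely generated and carries a compatible $n$-step filtration with subquotients in $\add\mcX$. This is built by descending induction on the filtration of $Z$: the composite $M\to Z\to Z_n/Z_{n-1}$ factors through a finitely generated module $W^n$ in $\add\mcX$ (compactness applied to a \emph{map} into a module of $\Add{\mcX}$, which is harmless), one chooses a finite free $F^n$ mapping onto $W^n$ with a lift $F^n\to Z_n$, replaces $M$ by the kernel of $M\oplus F^n\to W^n$ (finitely generated since $\Lambda$ is noetherian), which maps to $Z_{n-1}$, and iterates; the filtration steps of $W$ are $M_i\oplus F^i\oplus\cdots\oplus F^1$. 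Applying this to the split monomorphism $M\to Z$ witnessing $M\in|\Add{\mcX}|_n$ makes $M$ a direct summand of $W$, in fact of $W/W_0$, whose filtration has all $n$ subquotients in $\add\mcX$, whence $M\in|\mcX|_n$. If you wish to keep your induction on $n$, you will find you need essentially this factorization statement anyway, so it is simpler to prove it directly.
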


\begin{proof}
First we verify the following claim; confer \cite{Rouquier08a}*{Proposition~3.13}

\begin{claim}
Let $M$ be a finitely generated $\Lambda$-module and $\vf\colon M\to Z$ a homomorphism in $\Mod\Lambda$, where $Z$ admits a filtration $\{0\}=Z_{0}\subseteq Z_{1}\subseteq\cdots \subseteq Z_{n}=Z$ with sub-quotients in $\Add\mcX$. Then $\vf$ factors as $M\to W\to Z$ where $W$ is a finitely generated $\Lambda$-module with a filtration
\[
\xymatrixcolsep{.75pc}
\xymatrixrowsep{1.5pc}
\xymatrix{ 
Z_{0} \ar@{^{(}->}[rr] && Z_{1} \ar@{^{(}->}[rr] &&\cdots \cdots  \ar@{^{(}->}[rr]&& Z_{n}\ar@{=}[r] & Z \\
W_{0} \ar@{->}[u] \ar@{^{(}->}[rr] && W_{1} \ar@{->}[u] \ar@{^{(}->}[rr] &&\cdots \cdots  \ar@{^{(}->}[rr] && W_{n} \ar@{->}[u] \ar@{=}[r] & W 
}
\]
where $W_{i+1}/W_{i}$ is a direct summand of $Z_{i}/Z_{i-1}$ and in $\add\mcX$, for each $0\leq i < n$.
\end{claim}

Indeed, set $Z^{i}=Z_{i}/Z_{i-1}$ for each $i\ge 1$; this is in $\Add\mcX$, by hypotheses. Set $M_{n}=M$ and $\vf_{n}=\vf$.
We construct, for $1\le i\le n$, commutative diagrams with exact rows
\[
\xymatrixcolsep{2.5pc}
\xymatrix{
0 \ar@{->}[r] & Z_{i-1} \ar@{<-}[d]_{\vf_{i-1}} \ar@{->}[r]
                    &Z_i \ar@{<-}[d]_{(\vf_{i},\kappa_{i})}\ar@{->}[r] &Z^{i}\ar@{<-}[d] \ar@{->}[r] & 0 \\
0 \ar@{->}[r] &M_{i-1} \ar@{->}[r]  & M_{i} \oplus F^{i} \ar@{->}[r]^-{(\psi_{i},\eps_{i})} &W^{i} \ar@{->}[r] & 0
}
\]
where $W^{i}$ is a direct summand of $Z^{i}$ and in $\add\mcX$, and $F^{i}$ is a free $\Lambda$-module of finite rank. These are obtained as follows: Since $M_{n}$ is finitely generated, the composed map $M_{n}\to Z_{n}\to Z^{n}$ factors as $M_{n}\xra{\psi_{n}} W^{n}\to Z^{n}$, where $W^{n}$ is a direct summand of $Z^{n}$ and in $\add\mcX$. Choose a surjective map $\eps_{n} \colon F^{n}\to W^{n}$, with $F^{n}$ a finitely generated free $\Lambda$-module, and a lifting $\kappa_{n}\colon F^{n}\to Z_{n}$ of the composition $F^{n}\to W^{n}\to Z^{n}$, through the surjection $Z_{n}\to Z^{n}$. Setting $M_{n-1}$ to be the kernel of $(\psi_{n},\eps_{n})$ and $\vf_{n-1}$ the induced map gives the data required to construct the diagram above for $i=n$; it is readily seen to be commutative. Observe that $M_{n-1}$ is finitely generated; now repeat the construction above for $\vf_{n-1}$.

Finally, set $W_{i} := M_{i}\oplus F^{i}\oplus F^{i-1}\oplus \cdots \oplus F^{1}$ for each $1\leq i\leq n$ and $W_{0}:=M_{0}$. There is then a canonical inclusion $W_{i-1}\subseteq W_{i}$, with quotient $W^{i}$, and the desired commutative diagram is as follows:
\[
\xymatrix{
Z_{0} \ar@{<-}[d]_{\vf_{0}}\ar@{^{(}->}[r] &  Z_{1} \ar@{<-}[d]_{(\vf_{1},\kappa_{1})}\ar@{^{(}->}[r] 
	&\cdots\cdots \ar@{^{(}->}[r] & Z_{n} \ar@{<-}[d]_{(\vf_n,\kappa_n,\dots,\kappa_{1})} \\
W_{0} \ar@{^{(}->}[r] & W_{1}  \ar@{^{(}->}[r] & \cdots \cdots \ar@{^{(}->}[r] & W_{n}
}
\]
It remains to note that the composition $M_{n}\to W_{n}\to Z_{n}$ is precisely $\vf$. 

This justifies the claim.  Now suppose that $M$ is a finitely generated module in $|\Add\mcX|_{n}$, so that there exists a split monomorphism $\vf\colon M\to Z$, where $Z$ has a $n$-step filtration with sub-quotients in $\Add(\mcX)$. Thus the claim applies, and we set $C$ to be the cokernel of  the inclusion $W_0\subseteq W$; it is in $|\mcX|_n$, by construction. As $Z_0=\{0\}$, the map $W\to Z$ factors through $C$. Since $M$ is a direct summand of $W$, it is also a direct summand of $C$. Therefore $M$ is in $|\mcX|_{n}$, as desired. 
\end{proof}

\section{Commutative rings}
\label{se:commutative}
The focus of this section is on converses to Theorem~\ref{th:generator-sing} that justify the claim made in the introduction; namely,  the existence of cohomology annihilators is intertwined with the existence of strong generators for module categories. Though the fundamental result in this section, Theorem~\ref{th:ca-generator}, can be formulated for noether algebras, we have chosen to present it for commutative rings, for the statement appears most natural in that context. The proof is an adaptation of \cite{DaoTakahashi11}*{Theorem~5.7}.

\begin{theorem}
\label{th:ca-generator}
Let $R$ be a commutative noetherian ring of Krull dimension $d$. If there exists a positive integer $s$ such that $\cande s{R/\fp}\ne 0$ for each $\fp \in \Spec R$, then there is a finitely generated $R$-module $G$ and an integer $n$ such that $\syz^{s+d-1}_{R}(\mod R)\subseteq |G|_{n}$. In particular, $\mod R$ has a strong generator.
\end{theorem}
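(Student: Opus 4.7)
The plan is noetherian descending induction on ideals $I\subseteq R$, adapting the strategy of \cite{DaoTakahashi11}*{Theorem~5.7}. For each ideal $I$ I will establish the statement $P(I)$: there exist $G_I\in\mod R$ and a positive integer $n_I$ such that $\syz^{s+\dim(R/I)-1}_R M\in |G_I|_{n_I}$ for every $M\in\mod R$ with $IM=0$; then $P(0)$ is the desired theorem. The base case $I=\fm$ (maximal) is immediate, since every $M$ with $\fm M=0$ lies in $\add(R/\fm)$, and $G_\fm:=\syz^{s-1}_R(R/\fm)\oplus R$ with $n_\fm=1$ works.

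For the inductive step, assume $P(J)$ for all $J\supsetneq I$. If $I$ is not prime, pick $a,b\in R\setminus I$ with $ab\in I$; then for any $M\in\mod R$ with $IM=0$, the exact sequence $0\to aM\to M\to M/aM\to 0$ has $aM$ annihilated by $(I{:}a)\supsetneq I$ and $M/aM$ by $I+(a)\supsetneq I$, so applying $\syz^{s+\dim(R/I)-1}_R$ and invoking the induction hypothesis at $(I{:}a)$ and $I+(a)$ yields $P(I)$, after absorbing the relevant modules into a single $G_I$.

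The essential case is $I=\fp$ prime. The hypothesis supplies $a\in R\setminus\fp$ with $\bar a:=a+\fp$ annihilating $\Ext^{\ges s}_{R/\fp}(\mod R/\fp,\mod R/\fp)$; set $J:=\fp+(a)\supsetneq\fp$, so $\dim R/J\le \dim R/\fp-1$ since $R/\fp$ is a domain. Remark~\ref{re:ext2} applied over $R/\fp$ to $N:=\syz^{s-1}_{R/\fp}M$ produces an exact sequence
\[
0\to (0:_N \bar a)\to \syz^{s-1}_{R/\fp}M\oplus \syz^s_{R/\fp}M\to \syz_{R/\fp}(N/\bar aN)\to 0,
\]
whose outer terms both lie in $\mod R/J$. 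Iterating $\syz_{R/\fp}$ via Remark~\ref{re:syzygies} shows that $\syz^{s+\dim R/\fp-1}_{R/\fp}M$ is a direct summand, up to projectives, of a module in $|\mcX|_2$, where $\mcX$ consists of $\syz^{\dim R/\fp-1}_{R/\fp}X_0$ and $\syz^{\dim R/\fp}_{R/\fp}X_1$ for $X_0,X_1\in\mod R/J$, together with $R/\fp$.

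To convert these $R/\fp$-syzygy bounds into $R$-syzygy bounds, I exploit two snake-lemma sequences: $0\to \fp R^n\to \syz^1_R Y\to \syz^1_{R/\fp}Y\to 0$ for $Y\in\mod R/\fp$, and $0\to \bar a(R/\fp)^m\to \syz^1_{R/\fp}X\to \syz^1_{R/J}X\to 0$ for $X\in\mod R/J$; the left term of the second is a free $R/\fp$-module since $\bar a$ is a nonzerodivisor in the domain $R/\fp$. Iterating these comparisons expresses $\syz^{s+\dim R/\fp-1}_R M$ in a bounded number of extensions from a fixed finite collection of modules---iterated $R$-syzygies of $\fp$ and of $R/\fp$---together with $\syz^{s+\dim R/\fp-1}_R X$ for $X\in\mod R/J$, which lies in $|G_J|_{n_J}$ by $P(J)$ after taking enough further syzygies of $G_J$ to match the exponent. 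The main technical hurdle is precisely this exponent matching: $P(J)$ directly bounds at level $s+\dim R/J-1\le s+\dim R/\fp-2$, one short of the target, and this shortfall together with the snake-lemma iterations must be absorbed by enlarging $G_\fp$; noetherianity of $R$ ensures that the descending induction terminates after finitely many steps.
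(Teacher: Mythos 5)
Your d\'evissage over ideals (the non-prime step) and your conversion of $R$-syzygies of an $R/\fp$-module into iterated extensions of $R/\fp$-syzygies by copies of the fixed modules $\syz^{j}_{R}\fp$ are sound: in those steps the module to be controlled is a summand of the \emph{middle} of the extensions, which is what the $|\cdot|_{n}$-calculus permits. The genuine gap is in the prime case, and it is directional. Because $\bar a$ annihilates Ext only over $R/\fp$, Remark~\ref{re:ext2} controls $R/\fp$-syzygies of $M$ in terms of $R/\fp$-syzygies of $X_{0}=(0:_{N}\bar a)$ and $X_{1}=N/\bar aN$ in $\mod R/J$ (incidentally, the right-hand term $\syz_{R/\fp}(N/\bar aN)$ is \emph{not} killed by $J$, only $X_{1}$ is). But your inductive hypothesis $P(J)$ controls $R$-syzygies of $R/J$-modules, and neither comparison sequence converts that into control of $\syz^{e-1}_{R/\fp}X_{0}$ or $\syz^{e}_{R/J}X_{1}$: in $0\to\fp^{\oplus n}\to\syz_{R}X\to\syz_{R/\fp}X\to 0$ and its $R/J$-analogue the module you need is the \emph{quotient}, and membership in $|G|_{m}$ passes to summands of middle terms, never to quotients. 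The alternative routing---apply only $R$-syzygies to the sequence from Remark~\ref{re:ext2}---fails on exponents: forming $N=\syz^{s-1}_{R/\fp}M$ is forced (that is where the annihilation is available), so after converting the target to $\syz^{e}_{R}N$ with $e=\dim R/\fp$, the $R/J$-modules produced receive at most $e$ further $R$-syzygies, whereas $P(J)$ applies only from level $s+\dim R/J-1$, which can equal $s+e-2$; for $s\ge 3$ no amount of further syzygies of $G_{J}$ helps, since syzygying $G_{J}$ raises, never lowers, that threshold. In particular your closing claim, that the iteration outputs $\syz^{s+\dim R/\fp-1}_{R}X$ with $X\in\mod R/J$, is not what the stated comparisons produce.

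The paper avoids both problems by inducting on Krull dimension with a statement intrinsic to each ring, so the annihilator always lives over the ambient ring. In the domain case $a\in\cande sR$ annihilates $\Ext^{1}_{R}(N,\syz_{R}N)$ for the full target $N=\syz^{s+d-1}_{R}M$; since $N$ is torsion-free over the domain, $(0:_{N}a)=0$ and Remark~\ref{re:ext2} splits $N$ off $\syz_{R}(N/aN)$; and the isomorphism $N/aN\cong\syz^{s+d-2}_{R/aR}(\syz_{R}M/a\syz_{R}M)$ of \cite{DaoTakahashi11}*{Lemma~5.6} (reduction modulo a nonzerodivisor commutes with syzygies) passes to the quotient ring in the good direction and at exactly the right level, so the induction hypothesis for $R/aR$ applies; the general case then follows from a prime filtration together with \cite{DaoTakahashi11}*{Corollary~5.5}, which is the correct form of your assembly step. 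To repair your argument you would need to replace $P(\fp)$ by the intrinsic statement $\syz^{s+\dim R/\fp-1}_{R/\fp}(\mod R/\fp)\subseteq|G_{\fp}|_{n_{\fp}}$ and descend to the ring $(R/\fp)/(\bar a)$ rather than to the ideal $J$ of $R$---which essentially reproduces the paper's induction on dimension.
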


\begin{proof}
We induce on $\dim R$, the base case $\dim R=0$ being clear for then $\mod R = |R/J(R)|_{l}$ where $J$ is the Jacobson radical of $R$ and $l$ is its Loewy length. Assume $\dim R\ge 1$. 

Consider first the case when $R$ is a domain. Then, by hypothesis, $\cande sR$ contains a non-zero element $a$; we can assume that it  is not invertible, for $\dim R\ge 1$. By the induction hypothesis, there exists an $R/aR$-module $G$ such that 
\begin{equation}
\label{eq:reduction}
\syz^{s+d-2}_{R/aR}(\mod R/aR)\subseteq |G|_{n}\quad\text{for some $n\in\bbZ$.}
\end{equation}
Fix an $R$-module $M$ and set $N=\syz^{s+d-1}_{R}M$. Since $R$ is a domain, there is an isomorphism
\[
N/aN \cong \syz^{s+d-2}_{R/aR}(\syz_{R}M/a\syz_{R}M)\,;
\]
see, for example, \cite{DaoTakahashi11}*{Lemma~5.6}. Viewing $G$ as an $R$-module, it follows from \eqref{eq:reduction} that $N/aN$ is in $|G|_{n}$, and hence that $\syz_{R}(N/aN)$ is in $|\syz_{R}G|_{n}$.  Since $\Ext^{1}_{R}(N,-)$ is isomorphic to $\Ext^{s}(\syz^d_{R}M,-)$, the element
$a$ annihilates it. Since $N$ is at least a first syzygy, because $s+d-1\ge 1$, and $R$ is a domain, $a$  is a non-zerodivisor on $N$. Therefore Remark~\ref{re:ext2} yields that $N$ is a direct summand of $\syz_{R}(N/aN)$. In conclusion $N$, that is to say, $\syz^{s+d-1}_{R}(M)$ is in $|\syz_{R}(G)|_{n}$. Since $M$ was arbitrary, this gives the desired result for $R$.

This completes the proof when $R$ is a domain. When it is not, one can choose ideals $(0)=I_{0}\subset I_{1}\subset\cdots \subset I_{m}=R$ such that $I_{i+1}/I_{i}\cong R/\fp_{i}$ for some $\fp_{i}$ in $\Spec R$, for each $i$.  By the already established part of the result, there exists an integer $n$ and $R$-modules $G_{i}$ such that $\syz_{R/\fp_{i}}^{s+d-1}(\mod R/\fp_{i})\subseteq |G_{i}|_{n}$. For any $M$ in $\mod R$, there are exact sequences
\[
0\lra I_{i}M \lra I_{i+1}M \lra I_{i+1}M/I_{i}M \lra 0 
\]
for $0\le i\le m-1$. It then follows from \cite{DaoTakahashi11}*{Corollary~5.5}  that there exists a $G$ in $\mod R$ such that $\syz^{s+d-1}_{R}M$ is in $|G|_{m(s+d)(n+1)}$ for each $M\in\mod R$. 
\end{proof}

Under stronger hypotheses, the argument in the proof of the theorem above gives the following, more precise, statement.

\begin{theorem}
\label{th:ca-generator2}
Let $R$ be a commutative noetherian ring of Krull dimension $d$. If for each prime ideal $\fp$ in $R$, there exists an integer $s\le \dim R/\fp+1$ such that $\cande s{R/\fp}\ne 0$ , then there exists a $G$ in $\mod R$ and an integer $n$ such that $\syz^{d}_{R}(\mod R)\subseteq |G|_{n}$.\qed
\end{theorem}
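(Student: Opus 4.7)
The plan is to follow the inductive strategy used for Theorem~\ref{th:ca-generator}, while exploiting the sharper bound $s(\fp)\le \dim R/\fp+1$ to save the additive $s-1$ in the syzygy exponent. The base case $d=0$ is unchanged: $R$ is artinian of some Loewy length $l$, so $\mod R=|R/J(R)|_{l}$ and the conclusion $\syz^{0}_R(\mod R)\subseteq |G|_{l}$ holds with $G=R/J(R)$.

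For the inductive step, first handle the case when $R$ is a domain of dimension $d\ge 1$. Applied to $\fp=(0)$, the hypothesis supplies $s\le d+1$ with $\cande{s}{R}\ne 0$; since $\cande{s}{R}\subseteq \cande{d+1}{R}$, one may choose a non-zero non-invertible $a\in \cande{d+1}{R}$. Then $R/aR$ has dimension at most $d-1$ and inherits the hypothesis, because primes of $R/aR$ correspond bijectively to primes $\fp\supseteq aR$ of $R$ with $(R/aR)/(\fp/aR)=R/\fp$ and $\dim(R/aR)/(\fp/aR)=\dim R/\fp$. By induction there are $G'\in\mod(R/aR)$ and an integer $n'$ with $\syz^{d-1}_{R/aR}(\mod R/aR)\subseteq |G'|_{n'}$; when $\dim R/aR<d-1$, first get the bound at syzygy level $\dim R/aR$ and then promote it by replacing $G'$ with $\syz^{d-1-\dim R/aR}_{R/aR}G'$, using that $\syz_{R/aR}$ takes $|-|_{n'}$ into $|\syz_{R/aR}(-)|_{n'}$ (a direct consequence of Remark~\ref{re:syzygies}).

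Now fix $M\in \mod R$ and set $N=\syz^{d}_{R}M$. Because $a\in\cande{d+1}{R}$ and $\Ext^{1}_{R}(N,\syz_R N)\cong \Ext^{d+1}_R(M,\syz_R N)$, the element $a$ annihilates $\Ext^{1}_{R}(N,\syz_R N)$, and because $R$ is a domain of dimension $\ge 1$ and $N$ is a first syzygy, $a$ acts as a non-zerodivisor on $N$. Remark~\ref{re:ext2} then forces $N$ to be a direct summand of $\syz_R(N/aN)$. Using the isomorphism $N/aN\cong \syz^{d-1}_{R/aR}(\syz_R M/a\syz_R M)$ from \cite{DaoTakahashi11}*{Lemma~5.6}, the inductive bound yields $N/aN\in |G'|_{n'}$, hence $\syz_R(N/aN)\in |\syz_R G'|_{n'}$, and therefore $N\in |\syz_R G'|_{n'}$, settling the domain case with exponent $d$ in place of $s+d-1$. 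The reduction from arbitrary $R$ to the domain case proceeds exactly as in the proof of Theorem~\ref{th:ca-generator}: pick a chain $0=I_0\subset \cdots\subset I_m=R$ with $I_{i+1}/I_i\cong R/\fp_i$, apply the domain case to each $R/\fp_i$ (all of dimension at most $d$), absorb the extra syzygy shifts needed to lift every $\syz^{\dim R/\fp_i}_{R/\fp_i}$-conclusion to $\syz^{d}_R$-level into the $G_i$, and aggregate using \cite{DaoTakahashi11}*{Corollary~5.5} applied to the exact sequences $0\to I_iM\to I_{i+1}M\to I_{i+1}M/I_iM\to 0$, noting that $\fp_i$ annihilates $I_{i+1}M/I_iM$ so that it is a finitely generated $R/\fp_i$-module.

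The main obstacle will be the bookkeeping around syzygies, in particular confirming that the inductive bound on $R/aR$ can be lifted to syzygy level $d-1$ even when $\dim R/aR$ is strictly smaller than $d-1$, and that the $|-|_n$ operation is stable under taking syzygies. The arithmetic heart of the refinement is the simple observation that the assumption $s(\fp)\le \dim R/\fp+1$ is precisely tight enough to place a non-zero cohomology annihilator in $\cande{d+1}{R}$ for a $d$-dimensional domain $R$, which is exactly the degree required for Remark~\ref{re:ext2} to apply to $N=\syz^{d}_R M$ directly.
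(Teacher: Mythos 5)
Your proposal is correct and follows essentially the same route the paper intends: Theorem~\ref{th:ca-generator2} is obtained by rerunning the induction of Theorem~\ref{th:ca-generator} with the sharper hypothesis, which is exactly what you do by placing a non-zero non-invertible $a$ in $\cande{d+1}{R}$ in the domain case, applying Remark~\ref{re:ext2} to $N=\syz^{d}_{R}M$ together with \cite{DaoTakahashi11}*{Lemma~5.6}, and handling the general case via a prime filtration and \cite{DaoTakahashi11}*{Corollary~5.5}. Your bookkeeping (promoting the inductive bound to syzygy level $d-1$ over $R/aR$ and absorbing projective summands into the generator) matches the level of detail of the paper's own argument.
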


\subsection*{Excellent local rings}
For complete local rings, the part of the result below concerning the existence of the generator $G$ was proved in \cite{DaoTakahashi11}*{Theorem 5.7} under the additional hypothesis that the residue field of $R$ is perfect. The description of the singular locus should be compared with \cite{Wang94}*{Corollary 5.15}. The latter result implies that when $R$ is an equidimensional complete local ring containing a perfect field, 
one can replace $2d+1$ by $d+1$ in the statement below. We refer the reader to \cite{Matsumura80}*{(34.A)} for the notion of excellence.
 
\begin{theorem}
\label{th:complete-equicharacteristic}
Let $R$ be an equicharacteristic excellent local ring of Krull dimension $d$. There are  equalities
\[
\mcV(\can R)=\mcV(\cande {2d+1}R) = \Sing R\,.
\]
Furthermore, there exists a finitely generated $R$-module $G$ and an integer $n$ such that $\syz^s(\mod R)\subseteq |G|_n$ for $s=3d$; if $R$ is complete there is a $G$ for which $s=d$ suffices.  In particular, $\mod R$ has a strong generator.
\end{theorem}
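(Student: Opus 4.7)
The plan is to reduce everything to Theorems~\ref{th:ca-generator2} and~\ref{th:generator-sing}, applied either directly to $R$ (when $R$ is complete) or to $\hat R$ followed by a descent step (in the general case). Throughout, the finitistic global dimension of $R$ is at most $d$: for any $\fp \in \Reg R$ one has $\gldim R_\fp = \height\fp \le d$, and this is what makes the exponent $2d+1 = s + d + 1$ with $s = d$ appear in Theorem~\ref{th:generator-sing}.

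Assume first that $R$ is complete. By the Cohen structure theorem, each quotient $A := R/\fp$ is a complete equicharacteristic local domain of dimension $m \le d$ that is module-finite over a formal power series subring $A_0 \cong k\llbracket y_1,\dots,y_m\rrbracket$. The technical heart of this step is to arrange that the finite extension of fraction fields $Q(A)/Q(A_0)$ is separable. This is automatic in characteristic zero; in positive characteristic it demands a careful choice of coefficient field (containing a suitable $p$-basis) and system of parameters, a classical refinement of Cohen's theorem. With separability secured, Theorem~\ref{th:separable-noether-normalization} gives $\cande{m+1}A \ne 0$, and since $m + 1 \le \dim A + 1$, Theorem~\ref{th:ca-generator2} produces $G \in \mod R$ and $n \in \bbZ$ with $\syz^d_R(\mod R) \subseteq |G|_n$; that is, the desired strong generator with $s = d$. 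Applying Theorem~\ref{th:generator-sing} to $G$ yields $\mcV(\can R) = \mcV(\cande{2d+1}R) = \Sing R$.

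Now suppose $R$ is merely equicharacteristic excellent. Fix $\fp$ and set $A = R/\fp$; excellence forces $A$ to be Nagata, hence its completion $\hat A$ is reduced. If $\fq_1,\dots,\fq_k$ are the minimal primes of $\hat A$, each $\hat A/\fq_j$ is a complete equicharacteristic local domain of dimension at most $m := \dim A$, so the complete case supplies, through Theorem~\ref{th:separable-noether-normalization}, a non-zerodivisor $a_j \in \cande{m+1}{\hat A/\fq_j}$. Lifting the $a_j$ to $\hat A$ and combining them along the filtration $0 = \fq_1\cap\cdots\cap\fq_k \subseteq \fq_2\cap\cdots\cap\fq_k \subseteq \cdots \subseteq \hat A$, using the standard fact (from the long exact sequence of Ext) that cohomology annihilators compose across short exact sequences, yields $\cande s{\hat A} \ne 0$ for some $s$ bounded in terms of $d$. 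The descent from $\hat A$ to $A$ then exploits the faithful flatness of $A \to \hat A$ together with the identification $\Ext^n_{\hat A}(\hat M,\hat N) \cong \Ext^n_A(M,N)\otimes_A\hat A$ for $M,N \in \mod A$, to secure $\cande s A \ne 0$. Feeding this into Theorem~\ref{th:ca-generator} produces a strong generator for $\mod R$ with $s = 3d$. The ideal equality $\mcV(\can R) = \mcV(\cande{2d+1}R) = \Sing R$ is obtained from the corresponding equality for $\hat R$, already established in the complete case, by using that excellence makes $\Reg R$ the preimage of $\Reg \hat R$ under $\Spec \hat R \thra \Spec R$.

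The two points I expect to be genuinely delicate are (i) the separable noether normalization of complete equicharacteristic local domains in positive characteristic, and (ii) the descent from non-vanishing of $\cande s{\hat A}$ to that of $\cande s A$, since the formation of $\cande s{-}$ does not a priori commute with completion. Both reflect the flexibility alluded to in the introduction and are to be handled with the ascent-descent techniques developed in Section~\ref{se:ascent-descent}.
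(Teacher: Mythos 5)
Your complete case is exactly the paper's argument: Cohen/Gabber separable noether normalization for each $R/\fp$, then Theorem~\ref{th:separable-noether-normalization}, Theorem~\ref{th:ca-generator2} (giving $s=d$) and Theorem~\ref{th:generator-sing} (giving $2d+1=s+d+1$). One caveat there: the positive characteristic step is not a ``classical refinement'' of Cohen's theorem obtainable by choosing a coefficient field containing a $p$-basis; it is a theorem of Gabber, which is precisely what the paper cites. Since what you need is exactly the hypothesis of Theorem~\ref{th:separable-noether-normalization}, this is a citation issue rather than a gap.

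The general excellent case, however, has two genuine problems. First, the minimal-prime filtration step is wrong as stated: the elements $a_j\in\cande{m+1}{\widehat{A}/\fq_j}$ annihilate $\Ext$ computed over the rings $\widehat{A}/\fq_j$, and long exact sequences over $\widehat{A}$ cannot convert this into annihilation of $\Ext_{\widehat{A}}$ of $\widehat{A}$-modules; ``composing annihilators across short exact sequences'' works for filtrations of modules over a fixed ring, not across a change of rings (this is exactly why Theorem~\ref{th:ca-generator} is formulated at the level of generators, pushing the prime filtration through syzygies, rather than asserting non-vanishing of $\can{}$ for non-domains). The conclusion you want is nonetheless true and cheap: apply your already-established complete case to the reduced complete local ring $\widehat{A}$ itself, whose singular locus is a proper closed subset, to get $\cande{2m+1}{\widehat{A}}\ne 0$. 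Second, and more seriously, the descent from $\cande{s}{\widehat{A}}\ne 0$ to $\cande{s}{A}\ne 0$ (and likewise the equality $\mcV(\cande{2d+1}R)=\Sing R$) is never actually proved: faithful flatness and the isomorphism $\Ext^s_{\widehat{A}}(\widehat{M},\widehat{N})\cong\Ext^s_A(M,N)\otimes_A\widehat{A}$ only let you cancel $\widehat{A}$ when the annihilating element already lies in $A$, and a nonzero element of $\cande{s}{\widehat{A}}$ need not dominate any nonzero element of $A$; moreover the ascent--descent results of Section~\ref{se:ascent-descent}, which you invoke for this point, concern module-finite algebras and do not apply to $A\to\widehat{A}$. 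The argument the paper uses, and which you need, is: excellence makes $\Sing A$ closed, say $\mcV(I)$ with $I\subseteq A$; regularity of the completion fibers gives $\mcV(I\widehat{A})={}$ preimage of $\Sing A$ ${}=\Sing\widehat{A}=\mcV(\cande{2m+1}{\widehat{A}})$, hence $I^{n}\widehat{A}\subseteq\cande{2m+1}{\widehat{A}}$ for some $n$; since $I^{n}$ lives in $A$, faithful flatness now descends this to $I^{n}\subseteq\cande{2m+1}{A}$, which is nonzero. Without this step your route delivers neither the non-vanishing of $\cande{2d+1}{R/\fp}$ nor the stated exponent: Theorem~\ref{th:generator-sing} applied to your $\syz^{3d}$-generator would only give $\mcV(\cande{4d+1}R)=\Sing R$, not $2d+1$.
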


\begin{proof}
First we verify the result when $R$ is complete. Fix a prime ideal $\fp$ in $R$. The ring $R/\fp$ is then a complete equicharacteristic local domain, and hence has a separable noether normalization: When the residue field of the ring is perfect (for example, when its characteristic is $0$), this is due to Cohen~\cite{Cohen46}*{Theorem~16}); the general positive characteristic case is a result of Gabber's; see~\cite{Gabber13}*{IV, Th\'eor\`eme 2.1.1}. Thus Theorem~\ref{th:separable-noether-normalization} applies and yields that $\cande {\dim R/\fp+1}{R/\fp}$ is non-zero. Since $\fp$ was arbitrary, Theorem~\ref{th:ca-generator2} guarantees the existence of a module $G$ with stated property. Given this Theorem~\ref{th:generator-sing} justifies the equalities concerning the singular locus of $R$.

Next we verify this equality for a general excellent local ring. Given Lemma~\ref{le:ca-localize}(2), the moot point is that $\mcV(\cande{2d+1}R)\subseteq \Sing R$ holds. As $R$ is excellent, $\Sing R$ is a closed subset of $\Spec R$, by definition; see \cite{Matsumura80}*{Definition (34.A) and (32.B)}. Let $I$ be an ideal in $R$ with $\mcV(I)=\Sing R$. Let $\vf\colon R\to \widehat R$ denote completion with respect to the maximal ideal of $R$, and ${}^{\sfa}\vf\colon \Spec{\widehat R}\to \Spec R$ the induced map. There are then equalities
\[
\mcV(I\widehat R) = {}^{\sfa}\vf^{-1}(\mcV(I))=\Sing\widehat R =\mcV(\cande{2d+1}{\widehat R})
\]
where the first one is standard, the second one holds because the fibers of $\vf$ are regular, by \cite{Matsumura80}*{Theorem 79}, and the last is from the already established part of the result, applied to the complete local ring $\widehat R$. It follows that, for some non-negative integer $n$, there is an inclusion $I^{n}\widehat R\subseteq \cande{2d+1}{\widehat R}$.  Fix finitely generated $R$-modules $M,N$. Since $\widehat R$ is flat as an $R$-module, the natural map 
\[
\Ext^{2d+1}_{R}(M,N)\otimes_{R}\widehat R\to \Ext^{2d+1}_{\widehat R}(M\otimes_{R}\widehat R,N\otimes_{R}\widehat R)
\]
is an isomorphism,  so we deduce that $I^{n}$ annihilates the module on the left, and hence also $\Ext^{2d+1}_{R}(M,N)$, because $\widehat R$ is also faithful as an $R$-module. In summary, $I^{n}\subseteq \cande{2d+1}R$. This gives the desired inclusion.

It remains to justify the existence of a $G$ with the stated properties. As in the first part of the proof, given Theorems~\ref{th:ca-generator}  it suffices to note that for $\fp$ in $\Spec R$, the ideal $\cande{2d +1}{R/\fp}$ is non-zero: we already know that the closed subset of $\Spec (R/\fp)$ that it defines  coincides with $\Sing(R/\fp)$, and that is a proper closed subset because $R/\fp$ is a domain.
\end{proof}

\subsection*{Rings essentially of finite type over fields}
Compare the next result with Theorem~\ref{th:complete-equicharacteristic}. When $k$ is perfect and $R$ is itself a finitely generated $k$-algebra and a domain, the equalities below can be improved:  $2d+1$ can be replaced by $d+1$; this is by \cite{Wang99}*{Theorem~3.7}.

\begin{theorem}
\label{th:eft}
Let $k$ be a field and $R$ a localization of a finitely generated $k$-algebra of Krull dimension $d$.  There are equalities
\[
\mcV(\can R)=\mcV(\cande {2d+1}R) = \Sing R\,.
\]
Furthermore, there is a finitely generated $R$-module $G$ such that $\syz^d(\mod R)\subseteq |G|_n$ for some integer $n$. In particular, $\mod R$ has a strong generator.
\end{theorem}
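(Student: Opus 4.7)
The plan is to mirror the argument for Theorem~\ref{th:complete-equicharacteristic}, with the role of completions and Cohen-Gabber replaced by a direct application of Noether normalization in the essentially-of-finite-type setting. More precisely, I aim to establish that for every $\fp \in \Spec R$ the cohomology annihilator $\cande{\dim R/\fp+1}{R/\fp}$ is non-zero. Granted this, Theorem~\ref{th:ca-generator2} produces a finitely generated $R$-module $G$ and an integer $n$ with $\syz^{d}_{R}(\mod R) \subseteq |G|_{n}$, where $d = \dim R$. Since $\gldim R_{\fp} \les \dim R_{\fp} \les d$ at each regular prime $\fp$, the finitistic global dimension of $R$ is at most $d$, so applying Theorem~\ref{th:generator-sing} to $G$ with parameter $s = d$ yields $\mcV(\can R) = \mcV(\cande{2d+1}R) = \Sing R$.

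To verify the non-vanishing of $\cande{\dim R/\fp+1}{R/\fp}$, I would construct a separable noether normalization of the domain $T := R/\fp$ and then invoke Theorem~\ref{th:separable-noether-normalization}. Writing $R = U^{-1}B$ with $B$ a finitely generated $k$-algebra and $U \subseteq B$ multiplicatively closed, one has $T = U^{-1}(B/\fq)$ with $\fq = \fp \cap B$, so $B/\fq$ is a finitely generated $k$-algebra domain. By Noether normalization there are $x_{1}, \ldots, x_{m} \in B/\fq$ algebraically independent over $k$ such that $B/\fq$ is a finite $k[\mathbf{x}]$-module; localizing $k[\mathbf{x}]$ at the set of elements that become units in $T$ then produces a regular subring $A \subseteq T$ of finite global dimension over which $T$ is module-finite. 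Since $\dim A = \dim T$, Theorem~\ref{th:separable-noether-normalization} delivers the desired non-vanishing provided the finite field extension $Q(T)/Q(A)$ is separable, where $Q(-)$ denotes the field of fractions.

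The principal obstacle lies precisely in arranging this separability when $k$ is not perfect. If $k$ is perfect, then every finitely generated extension field of $k$ is separably generated, so the $x_{i}$ can be chosen to form a separating transcendence basis, making $Q(T)/k(\mathbf{x})$ and hence $Q(T)/Q(A)$ separable. For general $k$, one must either invoke an essentially-of-finite-type analogue of Gabber's theorem (cf.\ \cite{Gabber13}), or modify the Noether normalization by a careful change of generators, so as to secure separability. Once a separable noether normalization of each $R/\fp$ is in hand, the three conclusions of the theorem follow from the chain of implications outlined in the first paragraph.
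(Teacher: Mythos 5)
Your argument is complete only when $k$ is perfect; for that case it coincides with the paper's. For general $k$ there is a genuine gap exactly where you flag "the principal obstacle": neither of your two suggested remedies works. There is no essentially-of-finite-type analogue of Gabber's theorem to invoke --- the Cohen--Gabber results are about complete local rings, and that is precisely why the paper can use them in Theorem~\ref{th:complete-equicharacteristic} but not here. Nor can one always "modify the Noether normalization by a careful change of generators": when $k$ is imperfect the fraction field of $R/\fp$ need not be separably generated over $k$, so no choice of a polynomial subring $k[\mathbf{x}]\subseteq R/\fp$ will make the generic fibre separable, and a separable noether normalization in the sense of Section~\ref{se:the-noether-different} may simply be unavailable. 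The paper's proof for arbitrary $k$ takes a different route entirely: base change to an algebraic closure $K$, apply the perfect-field case to $R\otimes_kK$ to get $\syz^d(\mod(R\otimes_kK))\subseteq|C|_n$, descend $C$ to a module $G$ over $R\otimes_kl$ for a finite extension $l/k$ (so $G$ is finitely generated over $R$), and then deduce $\syz^d_R(\mod R)\subseteq|G|_n$ from the facts that $\syz_R^dM$ is a direct summand of $\syz^d_{R\otimes_kK}(M\otimes_kK)$, that $G\otimes_lK$ lies in $\Add\{G\}$ over $R$, and the compactness statement $|\Add\{G\}|_n\cap\mod R=|G|_n$ of Lemma~\ref{le:module-building}. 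Without some such descent argument (this is the Keller--Van den Bergh mechanism the paper adapts), your proposal does not establish the theorem beyond perfect $k$.

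A secondary, fixable, flaw: your construction of the normalization of $T=U^{-1}(B/\fq)$ by localizing $k[\mathbf{x}]$ at the elements that become units in $T$ does not in general produce a ring over which $T$ is module-finite --- localizing a finite extension at a multiplicative set of the top ring can destroy finiteness (already for a smooth affine curve finite over $k[x]$, localizing at one of two primes over $(x)$ gives a ring not finite over $k[x]_{(x)}$). The clean fix is the paper's opening reduction: since every finitely generated $R$-module is a localization of a finitely generated $B$-module and localization is exact, it suffices to produce $G$ for the finitely generated algebra $B$ and then localize the containment $\syz^d(\mod B)\subseteq|G|_n$; alternatively one can transport nonvanishing of $\cande{s}{B/\fq}$ to $\cande{s}{R/\fp}$ via Lemma~\ref{le:ca-localize}(1), taking care that the bound $s\le\dim R/\fp+1$ required by Theorem~\ref{th:ca-generator2} is still met.
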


\begin{proof}
It suffices to prove that a $G$ as desired exists, for then one can invoke Theorem~\ref{th:generator-sing} to  justify the stated equalities.
Suppose $R=U^{-1}A$ for some finitely generated $k$-algebra $A$ of Krull dimension $d$, and multiplicatively closed subset $U$ of $A$.  
Since every finitely generated $R$-module is a localization of a finitely generated $A$-module, and localization preserves exact sequences, it suffices to prove the result for $A$. Thus,  we may assume $R$ is itself a finitely generated $k$-algebra, of dimension $d$.

First, we consider the case where $k$ is perfect. The argument that such a $G$ exists is then the same as for Theorem~\ref{th:complete-equicharacteristic}: For each prime ideal $\fp$ in $R$, the ring $R/\fp$ is also a finitely generated $k$-algebra, and hence has a separable noether normalization (see, for example,~\cite{Nagata62}*{Theorem 39.11}) so Theorem~\ref{th:separable-noether-normalization} yields that $\cande {\dim R/\fp+1}{R/\fp}$ is non-zero. The desired result then follows from Theorem~\ref{th:ca-generator2}.

Next we tackle the case of an arbitrary field $k$ by adapting an argument from \cite{KellerVandenBergh08}*{Proposition 5.1.2}.
Let $K$ be an algebraic closure of $k$. Then $R\otimes_kK$ is a finitely generated $K$-algebra of dimension $d$. Since $K$ is perfect, the already establish part of the statement yields a finitely generated $R\otimes_kK$-module $C$ and an integer $n>0$ such that 
\[
\syz^d(\mod (R\otimes_kK))\subseteq|C|_n\,.
\]
Since $C$ is finitely generated, there exists a finite field extension $l$ of $k$ and a finitely generated $(R\otimes_kl)$-module $G$ such that $C\cong G\otimes_lK$. Note that $R\otimes_{k}l$, hence also $G$, is finitely generated as an $R$-module. We claim that, viewing $G$ as an $R$-module, there is an inclusion
\[
\syz^d(\mod R)\subseteq|G|_n\,.
\]
Indeed, let $M$ be a finitely generated $R$-module. Then the  $(R\otimes_{k}K)$-module $M\otimes_{k}K$ is finitely generated and hence $\syz_{R\otimes_kK}^d(M\otimes_kK)$ belongs to $|G\otimes_lK|_n$. Since the $R$-module $G\otimes_lK$ is a (possibly infinite) direct sum of copies of $G$,  there is an inclusion 
\[
|G\otimes_lK|_n\subseteq |\Add\{G\}|_n \quad\text{in $\Mod R$}.
\]
The $R$-module $\syz_R^dM$ is a direct summand of $(\syz_R^dM)\otimes_kK\cong \syz_{R\otimes_kK}^d(M\otimes_kK)$, so it follows that $\syz_R^dM$ is in $|\Add\{G\}|_n\cap\mod R$. It remains to apply Lemma~\ref{le:module-building}, recalling that $G$ is finitely generated over $R$ as well. This completes the proof of the theorem.
\end{proof}

The rings in Example~\ref{ex:non-open-loci} and the results in this section suggest the following.

\begin{question}
\label{qu:excellent}
Does $\mod R$ have a strong generator when $R$ is an excellent ring?
\end{question}

Any counter-example must have Krull dimension at least two; this follows from Corollary~\ref{co:nagata1} below, which is deduced from the next statement. In it, the Jacobson radical and the nil radical of a ring $R$ are denoted $\rad R$ and $\nil R$, respectively. See \cite{Matsumura80}*{\S31} for the definition of a Nagata ring.

\begin{proposition}
\label{pr:nagata1}
Let $R$ be a Nagata ring of Krull dimension one.
\begin{enumerate}[\quad\rm(1)]
\item
Assume $R$ is reduced, let $S$ be the integral closure of $R$, and $T$ the quotient of $R$ by its conductor ideal.  With $\ell$ the Loewy length of $T$ and $n= 2 + 2\ell$, there is an inclusion
\[
\syz_{R}(\mod R)\subseteq |S \oplus \syz_R(S) \oplus (T/\rad T) \oplus \syz_R(T/\rad T)|_{n}\,.
\]
\item
Let $\overline R$ denote the ring $R/\nil R$ and $G$ a finitely generated $\overline R$-module with $\syz_{\overline R}(\mod \overline R)$ contained in $|G|_n$ for some integer $n$. For $j= \min\{ i \mid (\nil R)^i=0 \}$  one has
\[
\syz_{R}(\mod R)\subseteq |G \oplus \nil(R)|_{2nj}\,.
\]
\end{enumerate}
\end{proposition}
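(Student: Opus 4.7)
The plan is to establish both parts by explicit filtration arguments that exploit the one-dimensionality of $R$: part (1) uses the normalization $S$, which is finite over $R$ by the Nagata hypothesis, while part (2) reduces the general case to the reduced quotient $\overline R$ via the $\nil R$-adic filtration of an arbitrary module.

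For part (1), fix $M\in\mod R$ and set $N=\syz_R M$, which is torsion-free as $R$ is reduced. Then $N$ embeds in $R^m$, hence in $S^m$ for some $m$; let $SN$ denote the $S$-submodule of $S^m$ generated by $N$. Three structural observations drive the proof. First, $S$ is a regular ring of Krull dimension one (a finite product of Dedekind domains), so $SN$ is finitely generated and torsion-free and hence projective over $S$, making it a direct summand of some $S^k$ as an $S$-module, and therefore as an $R$-module; thus $SN\in\add_R S$. Second, the conductor ideal $\mathfrak{c}$ annihilates $S/R$, so $\mathfrak{c}\cdot SN\subseteq N$, and $SN/N$ is a finitely generated $T$-module admitting a Loewy filtration of length at most $\ell$ with subquotients in $\add(T/\rad T)$. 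Third, pulling back the sequence $0\to N\to SN\to SN/N\to 0$ along a free cover $R^a\twoheadrightarrow SN/N$ and splitting off the projective yields
\[
0\to \syz_R(SN/N)\to N\oplus R^a\to SN\to 0.
\]
Iterating the horseshoe lemma (Remark~\ref{re:syzygies}) through the Loewy filtration of $SN/N$ shows that $\syz_R(SN/N)$, up to projective summands, admits a filtration whose subquotients lie in $\add \syz_R(T/\rad T)$. Concatenating this with the extra step $SN\in\add S$ at the top produces a filtration of $N\oplus R^a$ with all subquotients in $\add\bigl(S\cup\syz_R S\cup (T/\rad T)\cup\syz_R(T/\rad T)\bigr)$, placing $N$ as a direct summand of the middle module in the asserted category $|\,\cdot\,|_{2+2\ell}$.

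For part (2), use the filtration $M\supseteq (\nil R)M\supseteq (\nil R)^2 M\supseteq\cdots\supseteq (\nil R)^j M=0$, whose successive subquotients are $\overline R$-modules. The elementary starting point is that for any $\overline R$-module $P$, comparing the natural surjections $R^a\twoheadrightarrow \overline R^a\twoheadrightarrow P$ gives an exact sequence
\[
0\to (\nil R)^a\to \syz_R P\to \syz_{\overline R} P\to 0,
\]
and combined with the hypothesis $\syz_{\overline R} P\in |G|_n$ this yields $\syz_R P\in |G\oplus \nil R|_{n+1}$. Applying the horseshoe lemma iteratively through the $\nil R$-adic filtration then writes $\syz_R M$, up to a projective summand, as a $j$-step iterated extension of the modules $\syz_R((\nil R)^i M/(\nil R)^{i+1} M)$, each of which lies in $|G\oplus \nil R|_{n+1}$; refining and concatenating the resulting filtrations gives $\syz_R M\in |G\oplus \nil R|_{j(n+1)}\subseteq |G\oplus \nil R|_{2nj}$.

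The main obstacle throughout is the bookkeeping of filtration lengths and the absorption of the projective correction terms that appear in every horseshoe step: since $\syz$ is only defined up to projective summands, the ``syzygies'' produced in the iteration differ from honest choices by free summands that must be tracked. Fortunately $|\,\cdot\,|_n$ is closed under direct summands, so these corrections can be absorbed by enlarging the intermediate modules and then extracting the desired summand at the end; this is also what accounts for the factor-of-two slack in the stated bound $2+2\ell$ for part (1).
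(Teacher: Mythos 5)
Your part (2) is essentially the paper's own argument: the $\nil R$-adic filtration of $M$, the comparison sequence $0\to(\nil R)^a\to\syz_R P\to\syz_{\overline R}P\to 0$ for an $\overline R$-module $P$, and horseshoe-plus-concatenation counting giving $j(n+1)\le 2nj$; the paper merely compresses the bookkeeping by citing \cite{DaoTakahashi11}. Part (1), however, is a genuinely different route. The paper tensors $M$ with the conductor sequence $0\to C\to R\to T\to 0$, uses that $M\otimes_R C$ is a module over $S$ (which has global dimension one) to get $\syz_R(M\otimes_R C)\in|S\oplus\syz_R S|_2$, and assembles $\syz_R M$ from this together with the $T$-modules $\Tor_1^R(M,T)$ and $M\otimes_R T$, each lying in $|T/\rad T|_\ell$. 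You instead work directly with $N=\syz_R M\subseteq R^m\subseteq S^m$, pass to its $S$-hull $SN$, which is a finitely generated torsion-free module over the hereditary ring $S$ and hence lies in $\add_R S$, and use $C\cdot SN\subseteq N$ to see that $SN/N$ is a $T$-module of Loewy length at most $\ell$; one pullback/rotation then realizes $N\oplus R^a$ as an extension of $SN$ by $\syz_R(SN/N)$. This is arguably more economical: it avoids Tor, never uses $\syz_R S$, applies verbatim to every choice of first syzygy, and in fact lands inside the stated bound with room to spare; what the paper's route buys is that it manipulates only functorial constructions on $M$ itself, the pattern reused elsewhere in Section~\ref{se:commutative}.

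One caveat on your handling of projective corrections: summand-closedness of $|\mcX|_n$ does not by itself let you delete a free summand sitting inside a filtration subquotient unless $R\in\add\mcX$, so ``extract the desired summand at the end'' is too quick at that spot. The extra $R^a$ in the middle of your main sequence is genuinely harmless, since the defining sequences allow a summand $W$ in the middle term; the issue is only that a syzygy of a module in $\add(T/\rad T)$ is in general merely a summand of $(\syz_R(T/\rad T))^{k}\oplus(\mathrm{free})$. This is repairable within your own count: for a summand $V$ of $(T/\rad T)^{k}$, the syzygy computed from the composite cover $F\to(T/\rad T)^{k}\to V$ sits in an exact sequence with sub $(\syz_R(T/\rad T))^{k}$ and quotient the complementary summand, hence lies in $|T/\rad T\oplus\syz_R(T/\rad T)|_2$; running the horseshoe with these covers puts $\syz_R(SN/N)$ in $|\mcX|_{2\ell}$ and therefore $N$ in $|\mcX|_{2\ell+1}\subseteq|\mcX|_{2+2\ell}$, which also explains why both $T/\rad T$ and its syzygy appear in the generator. (The paper's proof is loose at exactly the same point, for instance when $\syz_R Q$ is treated as lying in $\add(\syz_R S)$.)
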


\begin{proof}
For (1), let $C$ be the conductor of $R$. Since $R$ is a reduced Nagata ring, $S$ is a finitely generated $R$-module, and hence $C$ contains a non-zerodivisor of $R$. Any finitely generated $S$-module is also finitely generated as an $R$-module, and that the ring $T$ is artinian; see, for example. These remarks will be used without comment.

Consider the exact sequence of $R$-modules
\[
0 \lra C \lra R \lra T \lra 0\,.
\]
Fix an $R$-module $M$. Applying $M\otimes_{R}-$ to this exact sequence yields exact sequences
\begin{equation}
\label{eq:nagata1}
\begin{gathered}
0\lra N \lra M \lra M\otimes_{R} T \lra 0\,, \\
0\lra \Tor^{R}_{1}(M,T)\lra M\otimes_{R} C \lra N  \lra 0\,.
\end{gathered}
\end{equation}
Note that $C$ is also an ideal of $S$, so $M\otimes_{R}C$ acquires a structure of an $S$-module.  Since $S$ has global dimension one, one can construct an exact sequence  $0\to P\to Q\to M\otimes_{R}C\to 0$ of $S$-modules with $P$ and $Q$ finitely generated projective $S$-modules. This exact sequence yields an exact sequence of the form
\[
0\lra \syz_{R}(Q) \lra \syz_{R}(M\otimes_{R}C) \lra P\lra 0\,.
\]
It follows that $\syz_{R}(M\otimes_{R}C)$ is contained in the subcategory $|S \oplus \Omega_R(S) |_2$ of $\mod R$. On the other hand, $M\otimes_{R}T$ and $\Tor^{R}_{1}(M,T)$ are in $|T/\rad T|_{\ell }$, because the ring $T$ is artinian.  Given these, the desired result follows from the exact sequences in \eqref{eq:nagata1}.

(2)  For any $R$-module $M$ has a filtration $\{0\}\subseteq I^{j}M \subseteq ...\subseteq IM \subseteq M$ that induces, up to projective summands, a filtration
\[
\{0\}\subseteq \syz_{R}(I^{j}M) \subseteq ...\subseteq \syz_{R}(IM) \subseteq \syz_{R}(M)\,.
\]
The desired assertion follows from \cite{DaoTakahashi11}*{Proposition 5.3(2)}.
\end{proof}

The next result is a direct consequence of the Proposition~\ref{pr:nagata1}.  Since excellent rings have the Nagata property, see  \cite{Matsumura80}*{(34.A)}, it settles, in the affirmative, Question~\ref{qu:excellent} for rings of Krull dimension one.

\begin{corollary}
\label{co:nagata1}
Let R be a Nagata ring of Krull dimension one. Then there exist an $R$-module $G$ and an integer $n$ such that $\syz_{R}(\mod R)\subseteq |G|_n$.\qed
\end{corollary}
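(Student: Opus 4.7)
The corollary follows by chaining the two parts of Proposition~\ref{pr:nagata1}: first one reduces to the reduced case modulo the nilradical, then applies the decomposition via the conductor and integral closure.

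My plan is as follows. Set $\overline R := R/\nil R$. Since $R$ is a Nagata ring, it is in particular noetherian, so $\nil R$ is nilpotent and $(\nil R)^{j}=0$ for some integer $j\ges 1$. The quotient $\overline R$ is reduced, of Krull dimension at most one, and inherits the Nagata property from $R$ (stability of the Nagata condition under quotients is standard, cf.\ \cite{Matsumura80}*{\S31}). So the hypotheses of Proposition~\ref{pr:nagata1} apply to $\overline R$ as well.

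Next, I would produce a strong generator for $\syz_{\overline R}(\mod \overline R)$. If $\dim\overline R=0$, then $\overline R$ is a reduced artinian ring, hence a finite product of fields, so every finitely generated $\overline R$-module is projective and one may take $G_{0}=\overline R$ with $n_{0}=1$. If $\dim\overline R=1$, then Proposition~\ref{pr:nagata1}(1), applied with $S$ the integral closure of $\overline R$ and $T$ the quotient of $\overline R$ by its conductor, furnishes a finitely generated $\overline R$-module
\[
G_{0} := S \oplus \syz_{\overline R}(S) \oplus (T/\rad T) \oplus \syz_{\overline R}(T/\rad T)
\]
and an integer $n_{0}=2+2\ell$ (where $\ell$ is the Loewy length of $T$) such that $\syz_{\overline R}(\mod\overline R)\subseteq |G_{0}|_{n_{0}}$.

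Finally, plugging this $G_{0}$ and $n_{0}$ into Proposition~\ref{pr:nagata1}(2), one obtains the desired containment
\[
\syz_{R}(\mod R)\subseteq |G_{0}\oplus \nil R|_{2n_{0}j}\,,
\]
so the corollary holds with $G:=G_{0}\oplus\nil R$ and $n:=2n_{0}j$. There is no real obstacle here: all the work is done in the proposition, and the only thing to verify independently is that $\overline R$ remains Nagata of dimension at most one, which is immediate.
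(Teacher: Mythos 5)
Your argument is correct and is exactly the deduction the paper intends: the corollary is stated there as a direct consequence of Proposition~\ref{pr:nagata1}, obtained precisely as you do by applying part~(1) to $\overline R=R/\nil R$ (reduced, Nagata, dimension one) and then part~(2) to lift the containment back to $R$. (The case $\dim\overline R=0$ that you treat separately is vacuous, since passing to $R/\nil R$ does not change the Krull dimension, but including it does no harm.)
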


\section{Ascent and descent}
\label{se:ascent-descent}

The crucial input in Theorems~\ref{th:ca-generator} and \ref{th:ca-generator2} is the existence of non-zero cohomology annihilators. Motivated by this, in this section we track the ascent and descent of this property between a commutative noetherian ring $A$ and a noether $A$-algebra $\Lambda$, which need not be commutative. The central result is Corollary~\ref{co:ascent-descent-nzd} that is deduced from the more technical, but also more precise, Theorems~\ref{th:descent} and \ref{th:ascent}.

\begin{theorem}
\label{th:descent}
Let $\Lambda$ be a noether $A$-algebra and set $I:=\ann_{A}\Ext^{1}_{A}(\Lambda,\syz_{A}\Lambda)$. For each integer $n\ge 1$ and element $a\in\cande n\Lambda$, one has
\begin{equation}
\label{eq:descent1}
a^{2}I^{n}\, \Ext_{A}^{\ges n}(\mod\Lambda,\mod A)=0\,.
\end{equation}
When the $A$-module $\Lambda$ has positive rank, there exists a non-zerodivisor $b\in A$ such that
\begin{equation}
\label{eq:descent2}
b\,\ann_{A} \Ext_{A}^{\ges n}(\mod\Lambda,\mod A)\subseteq \cande nA\,.
\end{equation}
\end{theorem}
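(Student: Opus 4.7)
The plan is to prove \eqref{eq:descent1} by combining two dimension-shifting arguments---one over $A$ exploiting $I$, and one over $\Lambda$ exploiting $a\in\cande n\Lambda$. First, Lemma~\ref{le:ca-module} applied to the $A$-module $\Lambda$ implies that $I$ annihilates $\Ext_{A}^{\ges 1}(\Lambda,N)$ for every $N\in\Mod A$, and hence annihilates $\Ext_{A}^{\ges 1}(P,N)$ for every finitely generated projective $\Lambda$-module $P$. Fix $M\in\mod\Lambda$, $N\in\mod A$, and a partial $\Lambda$-free resolution $P_{\bullet}\to M$. Applying $\Ext^{*}_{A}(-,N)$ to the sequences $0\to\syz^{j+1}_{\Lambda}M\to P_{j}\to\syz^{j}_{\Lambda}M\to 0$ for $j=0,\dots,n-2$ and chaining the resulting long exact sequences (as in Lemma~\ref{le:ext-and-syzygy}) yields
\[
I^{n-1}\Ext_{A}^{i}(M,N)\subseteq\operatorname{image}\bigl(\Ext^{i-n+1}_{A}(\syz^{n-1}_{\Lambda}M,N)\to\Ext^{i}_{A}(M,N)\bigr)\quad\text{for $i\ge n$.}
\]

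Next, by Lemma~\ref{le:ca-module} the element $a$ kills $\Ext^{n}_{\Lambda}(M,\syz^{n}_{\Lambda}M)\cong\Ext^{1}_{\Lambda}(\syz^{n-1}_{\Lambda}M,\syz^{n}_{\Lambda}M)$, so by Remark~\ref{re:ext1} the $\Lambda$-linear homothety by $a$ on $\syz^{n-1}_{\Lambda}M$ factors as $\syz^{n-1}_{\Lambda}M\xra{\psi}P_{n-1}\xra{\pi}\syz^{n-1}_{\Lambda}M$. Applying $\Ext^{j}_{A}(-,N)$ with $j\ge 1$, multiplication by $a$ on $\Ext^{j}_{A}(\syz^{n-1}_{\Lambda}M,N)$ factors through $\Ext^{j}_{A}(P_{n-1},N)$, which is annihilated by $I$. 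Thus $aI$ annihilates $\Ext^{\ges 1}_{A}(\syz^{n-1}_{\Lambda}M,N)$, and combining with the first display gives $aI^{n}\Ext^{\ges n}_{A}(M,N)=0$, which is at least as strong as the claimed \eqref{eq:descent1}.

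For \eqref{eq:descent2}, I would first produce a non-zerodivisor $b\in A$ and an $A$-linear map $g\colon\Lambda\to A$ with $g(1)=b$. Tensoring the evaluation map $\Hom_{A}(\Lambda,A)\to A$, $g\mapsto g(1)$, with the total quotient ring $Q$ of $A$ yields $\Hom_{Q}(Q\otimes_{A}\Lambda,Q)\to Q$; since $Q\otimes_{A}\Lambda$ is $Q$-free of positive rank, its fiber over any maximal ideal $\mathfrak{M}\subseteq Q$ is a nonzero algebra, so $1\notin\mathfrak{M}(Q\otimes_{A}\Lambda)$. Hence the image of evaluation at $1$ is not contained in any maximal ideal of $Q$ and thus equals $Q$; clearing denominators produces the desired $g$ and $b$. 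Given such $g$, for any $M'\in\mod A$ the composition $M'\to M'\otimes_{A}\Lambda\xra{M'\otimes g}M'$ is multiplication by $b$, so applying $\Ext^{i}_{A}(-,N)$ shows $b\cdot\Ext^{i}_{A}(M',N)$ lies in the image of $\Ext^{i}_{A}(M'\otimes_{A}\Lambda,N)$. Any $c\in\ann_{A}\Ext^{\ges n}_{A}(\mod\Lambda,\mod A)$ annihilates the latter for $i\ge n$ since $M'\otimes_{A}\Lambda\in\mod\Lambda$, so $bc\cdot\Ext^{i}_{A}(M',N)=0$, giving $bc\in\cande nA$.

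The main obstacle is producing the non-zerodivisor $b$ and the map $g$ in \eqref{eq:descent2}; once this ``splitting up to $b$'' of the unit map $A\to\Lambda$ is in hand, the descent follows formally from the factoring principle of Remark~\ref{re:ext1} combined with the change-of-rings dimension shift.
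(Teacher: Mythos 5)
Your argument is correct, and it differs from the paper's proof in both halves, most substantially in the second. For \eqref{eq:descent1} the paper also dimension-shifts over $A$ along a $\Lambda$-projective resolution, picking up a factor of $I$ at each step, but at the top it invokes Remark~\ref{re:ext2}: from the sequence $0\to(0\colon_{Y}a)\to Y\oplus\syz_{\Lambda}Y\to\syz_{\Lambda}(Y/aY)\to0$ with $Y=\syz_{\Lambda}^{n-1}X$, together with a projective presentation of $Y/aY$, it deduces that $a^{2}I$ kills $\Ext_{A}^{\ges 1}(Y,\mod A)$, which is where the exponent $2$ on $a$ comes from. Your use of Remark~\ref{re:ext1} instead---factoring the homothety $a$ on $\syz_{\Lambda}^{n-1}M$ through the projective $P_{n-1}$ and applying $\Ext_{A}^{j}(-,N)$---is simpler and yields $aI$ at that stage, hence the marginally sharper conclusion $aI^{n}\,\Ext_{A}^{\ges n}(\mod\Lambda,\mod A)=0$; the only point you leave implicit is that the connecting maps in the long exact sequences are linear for the first-variable action of $\cent\Lambda$ as well as for $A$, so that the action of $a$ and of $I$ can be pushed across them, but this is standard naturality. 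For \eqref{eq:descent2} the paper goes the other way: positive rank gives $0\to A^{r}\to\Lambda\to T\to0$ with $bT=0$ for a non-zerodivisor $b$, and tensoring with $M$ and applying Remark~\ref{re:4terms} with $J=(b)$ gives $b^{2}\ann_{A}\Ext_{A}^{\ges n}(\mod\Lambda,\mod A)\subseteq\cande nA$. You instead split the unit map up to a non-zerodivisor: your construction of $g\colon\Lambda\to A$ with $g(1)=b$ is sound, since $Q\otimes_{A}\Lambda$ is $Q$-free of rank $r\ge 1$, so $1$ lies outside $\mathfrak{M}(Q\otimes_{A}\Lambda)$ for every maximal ideal $\mathfrak{M}$ of $Q$, the image of evaluation at $1$ is therefore all of $Q$, and $\Hom_{A}(\Lambda,A)$ localizes correctly because $\Lambda$ is a finitely presented $A$-module; then multiplication by $b$ on any $M'\in\mod A$ factors through the finitely generated $\Lambda$-module $M'\otimes_{A}\Lambda$ and \eqref{eq:descent2} follows directly, with $b$ itself rather than $b^{2}$ and without Remark~\ref{re:4terms}. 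Both routes use the same hypotheses; yours requires a little more setup to produce $g$ but gives cleaner constants and a more transparent mechanism for the descent.
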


\begin{proof}
We repeatedly use the fact that $I$ annihilates $\Ext^{\ges 1}_{A}(\Lambda, \mod A)$; see Lemma~\ref{le:ca-module}.

Fix an $X\in\mod\Lambda$ and for $Y:=\syz_\Lambda^{n-1}X$ consider an exact sequence of $\Lambda$-modules
\[
0\to \syz_\Lambda(Y/aY) \to P \to Y/aY \to 0
\]
with $P$ projective. For each $M\in\mod A$ and integer $i\ge 1$, it induces an exact sequence
\[
\Ext_A^{i}(P,M) \to \Ext_A^{i}(\syz_\Lambda(Y/aY),M) \to \Ext_A^{i+1}(Y/aY,M)
\]
of $A$-modules, so $a I$ annihilates the module in the middle. Since $a\cdot \Ext^{i}_{\Lambda}(Y,\mod\Lambda)=0$, the exact sequence in Remark~\ref{re:ext2} yields an exact sequence
\[
\Ext_A^{i}(\syz_\Lambda(Y/aY),M) \to \Ext_A^{i}(Y,M)\oplus\Ext_A^{i}(\syz_\Lambda Y,M)\to\Ext_A^{i}((0:_Ya),M)
\]
Therefore, $a^{2}I$ annihilates $\Ext_{A}^{i}(Y,M)$, for each $i\ge 1$. Considering the exact sequence 
\[
0 \to Y \to P_{n-2} \to \cdots \to P_1 \to P_0 \to X \to 0
\]
of $\Lambda$-modules with each $P_i$ projective, a standard iteration yields \eqref{eq:descent1}.

Assume now that $\Lambda$ has positive rank, say equal to $r$, as an $A$-module. Thus, there is an exact sequence of $A$-modules
\[
0 \to A^{r} \to \Lambda \to T \to 0
\]
such that $bT=0$ for some non-zerodivisor $b\in A$. Let $M$ be a finitely generated $A$-modules. The exact sequence above yields an exact sequence of $A$-modules
$$
\Tor_1^A(M,T) \to M^{r} \to M\otimes_A\Lambda \to M\otimes_AT \to 0.
$$
Since $b$ annihilates $\Tor_1^A(M,T)$ and $M\otimes_AT$,  Remark~\ref{re:4terms} below, applied with $N=M\otimes_{A}\Lambda$ and $J=(b)$, gives \eqref{eq:descent2}. This is where the hypothesis that $r\ge 1$ is used.
\end{proof}

\begin{remark}
\label{re:4terms}
Let $K \to M \to N \to C$ be an exact sequence of $\Lambda$-modules and $J$ an ideal in $\cent\Lambda$ with $J(K\oplus C)=0$. For any $\Lambda$-module $X$ and integer $n$ there is an inclusion
\[
J^{2}\cdot \ann_{\cent\Lambda}\Ext^{n}_{\Lambda}(N,X)\subseteq \ann_{\cent\Lambda}\Ext^{n}_{\Lambda}(M,X)\,.
\]
Indeed, replacing $K$ and $C$ by the appropriate quotient module and submodule, we may assume that there are exact sequences $0 \to K \to M \to Z \to 0$ and $0\to Z\to N\to C\to 0$. These induce exact sequences
\begin{gather*}
 \Ext_R^n(N,X) \to \Ext_R^n(Z,X) \to \Ext_R^{n+1}(C,X),\\
 \Ext_R^n(Z,X) \to \Ext_R^n(M,X) \to \Ext_R^n(K,X).
\end{gather*}
Since $J$ annihilates $K$ and $C$, it also annihilates $\Ext_R^{n+1}(C,X)$ and $\Ext_R^n(K,X)$. The desired inclusion follows.
\end{remark}

\begin{lemma}
\label{le:2ndsyzygy}
Let $M$ be a finitely generated $A$-module and set $\Lambda=\End_A(M)$.  Up to projective $\Lambda$-summands, any second syzygy of a finitely generated $\Lambda$-module is isomorphic to a $\Lambda$-module of the form $\Hom_{A}(M,N)$, for some finitely generated $A$-module $N$.
\end{lemma}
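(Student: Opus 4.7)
The plan is to build a second syzygy directly from a free presentation of the given $\Lambda$-module, and then translate the construction through the functor $\Hom_A(M,-)$, which identifies finitely generated free $\Lambda$-modules with modules of the form $\Hom_A(M,M^n)$.

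To begin, fix a finitely generated $\Lambda$-module $X$. Since $\Lambda$ is noetherian, there is a free presentation
\[
\Lambda^a \xrightarrow{\ \varphi\ } \Lambda^b \lra X \lra 0
\]
with $a,b$ finite. By the standard Schanuel-type observation exploited already in Remark~\ref{re:ext-and-syzygy}, $\Ker\varphi$ differs from any chosen second syzygy of $X$ only by finitely generated projective $\Lambda$-summands, so it suffices to exhibit $\Ker\varphi$ in the desired form $\Hom_A(M,N)$.

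The key step is the canonical identification $\Lambda^n \cong \Hom_A(M,M^n)$ of $\Lambda$-modules, together with the Yoneda-type identity
\[
\Hom_\Lambda\bigl(\Hom_A(M,M^a),\, \Hom_A(M,M^b)\bigr) \;=\; \Hom_A(M^a,M^b),
\]
realized concretely by applying $\Hom_A(M,-)$. Indeed a $\Lambda$-linear map between finitely generated free $\Lambda$-modules is given by a matrix with entries in $\Lambda = \End_A(M)$, which is exactly the data of an $A$-linear map $M^a\to M^b$, and one checks that this correspondence coincides with the action of the functor $\Hom_A(M,-)$. Consequently $\varphi = \Hom_A(M,f)$ for some $A$-linear map $f\colon M^a\to M^b$.

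Finally, left exactness of $\Hom_A(M,-)$ gives
\[
\Ker\varphi \;=\; \Ker\Hom_A(M,f) \;\cong\; \Hom_A(M,\Ker f).
\]
Setting $N := \Ker f$ yields a finitely generated $A$-module, since $A$ is noetherian and $M^a$ is finitely generated over $A$. The essential content, and the only step requiring care, is the functorial identification of $\Lambda$-linear maps between finitely generated free $\Lambda$-modules with $A$-linear maps between the corresponding direct sums of $M$; once that is in hand, left exactness of $\Hom_A(M,-)$ closes the argument at once.
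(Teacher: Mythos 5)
Your proposal is correct and follows essentially the same route as the paper: both arguments use the fact that $\Hom_A(M,-)$ identifies (free or projective) $\Lambda$-modules with modules $\Hom_A(M,M')$ for $M'$ a direct sum of copies of $M$ (resp.\ in $\add_A M$), write the presentation map as $\Hom_A(M,f)$, and conclude by left exactness that the kernel is $\Hom_A(M,\Ker f)$. Your restriction to free $\Lambda$-modules and the explicit Schanuel step are only cosmetic differences from the paper's use of $\add_A M$ and projectives.
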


\begin{proof}
Consider the functor $\Hom_{A}(M,-)\colon \mod A\to \mod \Lambda$. It restricts to an equivalence $\add_RM\xra{\ \sim\ } \proj\Lambda$, so that each finitely generated projective $\Lambda$-module is isomorphic to $\Hom_R(M,M')$ for some $M'\in\add_RM$. Moreover, for any  finitely generated $\Lambda$-module $X$, one can construct an exact sequence  of $\Lambda$-modules
\[
\Hom_A(M,M_1) \xra{\ \Hom_A(M,f)\ } \Hom_A(M,M_0) \lra X \lra 0
\]
where $f\colon M_{1}\to M_{0}$ is an $A$-linear map in  $\add_AM$. Observe that the inclusion $\Ker(f) \subseteq M_1$ induces  an exact sequence of $\Lambda$-modules
\[
0 \lra \Hom_A(M,\Ker(f)) \lra \Hom_A(M,M_1) \xra{\ \Hom_{A}(M,f)\ } \Hom_{A}(M,M_0) \lra X \lra 0\,.
\]
This shows that $\Hom_A(M,\Ker(f))$ is a second syzygy of $X$ as a $\Lambda$-module.
\end{proof}

\begin{theorem}
\label{th:ascent}
Let $M$ be a finitely generated $A$-module and set $I:=\ann_{A}\Ext^{1}_{A}(M,\syz_{A}M)$. When $M$ has positive rank,
there exists a non-zerodivisor $b\in A$ such that for each integer $n\ge 1$ and element $a\in\cande nA$, one has
\[
ba^{3}I^{n-1}\subseteq \cande {n+2}{\End_{A}(M)}\,.
\]
\end{theorem}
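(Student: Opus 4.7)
The plan is to combine Lemma~\ref{le:2ndsyzygy}, which identifies second $\Lambda$-syzygies with modules of the form $\Hom_{A}(M,N)$, with the rank hypothesis (which forces the ``$b$-homothety'' on $M$ to factor through a free $A$-module) and with Lemma~\ref{le:ext-and-syzygy} applied to the ideal $I$ (which shifts annihilator information from degree $n$ down to degree $1$ at the cost of $I^{n-1}$). The extra exponent $a^{3}$ compared to the $a^{2}$ appearing in Theorem~\ref{th:descent} reflects the fact that we are working in the harder direction, ascending from $A$ to $\End_{A}(M)$.

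First, I would reduce the problem as follows. Fix $X,Y\in\mod\Lambda$. Remark~\ref{re:ext-and-syzygy} gives $\Ext^{n+2}_{\Lambda}(X,Y)\cong\Ext^{n}_{\Lambda}(\syz^{2}_{\Lambda}X,Y)$, and by Lemma~\ref{le:2ndsyzygy} we may replace $\syz^{2}_{\Lambda}X$ by $\Hom_{A}(M,N)$ for some $N\in\mod A$ (up to a projective $\Lambda$-summand, whose higher Ext vanishes). Hence it suffices to show that $ba^{3}I^{n-1}$ annihilates $\Ext^{n}_{\Lambda}(W,Y)$ for $W:=\Hom_{A}(M,N)$ and every $Y\in\mod\Lambda$, and by Lemma~\ref{le:ca-module} combined with Remark~\ref{re:ext-and-syzygy} this in turn amounts to annihilating $\Ext^{1}_{\Lambda}(\syz^{n-1}_{\Lambda}W,\syz^{n}_{\Lambda}W)$.

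Next, the positive rank of $M$ yields an exact sequence $0\to A^{r}\xra{\iota}M\to T\to 0$ of $A$-modules with $T$ annihilated by a non-zerodivisor $b\in A$. Consequently $bM\subseteq\iota(A^{r})$, so there exists an $A$-linear $\tilde\iota\colon M\to A^{r}$ with $\iota\tilde\iota=b\cdot\mathrm{id}_{M}$; dualizing, multiplication by $b$ on $W=\Hom_{A}(M,N)$ factors through $\Hom_{A}(A^{r},N)\cong N^{r}$. At the level of $A$-Ext, Lemma~\ref{le:ext-and-syzygy} applied with the shift $i=n-1$, combined with $a\in\cande nA$ annihilating $\Ext^{\ges n}_{A}(W,\mod A)$, gives that $aI^{n-1}$ annihilates $\Ext^{1}_{A}(\syz^{n-1}_{\Lambda}W,\syz^{n}_{\Lambda}W)\cong\Ext^{n}_{A}(W,\syz^{n}_{\Lambda}W)$; by Remark~\ref{re:ext1}, every $\Lambda$-extension of $\syz^{n-1}_{\Lambda}W$ by $\syz^{n}_{\Lambda}W$ becomes $A$-split after scaling by $aI^{n-1}$.

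Finally, I would promote this $A$-splitting to a $\Lambda$-splitting at the cost of an additional factor $ba^{2}$. The factor $b$ uses the rank factorization $\iota\tilde\iota=b\cdot\mathrm{id}_{M}$, which plays the role that the noether different plays in Lemma~\ref{le:ndiff-annihilates}; the $a^{2}$ is produced by iterated applications of Remark~\ref{re:ext2} together with Remark~\ref{re:4terms}, exactly as in the proof of Theorem~\ref{th:descent}. The conclusion is then $ba^{3}I^{n-1}\cdot\Ext^{1}_{\Lambda}(\syz^{n-1}_{\Lambda}W,\syz^{n}_{\Lambda}W)=0$, giving $ba^{3}I^{n-1}\subseteq\cande{n+2}\Lambda$. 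The hard part will be making the final upgrade precise: producing a $\Lambda$-linear splitting from an $A$-linear one using \emph{only} the rank factorization, without direct recourse to the noether different. This is where the factor $b$ enters and where the bulk of the technical effort will lie.
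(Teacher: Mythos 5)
Your reduction to annihilating $\Ext^{1}_{\Lambda}(\syz^{n-1}_{\Lambda}W,\syz^{n}_{\Lambda}W)$ for $W=\Hom_{A}(M,N)$ is fine, but the two steps that carry the actual content have genuine gaps. First, the passage from degree $n$ to degree $1$ at the level of $A$-Ext is not available in the form you use it: the isomorphism $\Ext^{1}_{A}(\syz^{n-1}_{\Lambda}W,\syz^{n}_{\Lambda}W)\cong\Ext^{n}_{A}(W,\syz^{n}_{\Lambda}W)$ fails because $\Lambda$-projectives need not be $A$-projective, and Lemma~\ref{le:ext-and-syzygy}, which is designed to repair exactly this, has its correction ideal equal to $\ann_{A}\Ext^{1}_{A}(\Lambda,\syz_{A}\Lambda)$ for the algebra $\Lambda=\End_{A}(M)$, \emph{not} the ideal $I=\ann_{A}\Ext^{1}_{A}(M,\syz_{A}M)$ of the theorem. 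These ideals differ; the best cheap comparison (if $x\in I$ then multiplication by $x$ on $M$ factors through a projective $P$, hence $x^{2}$ on $\End_{A}(M)$ factors through $\Hom_{A}(P,P)$) costs a square, so even after patching you would land at roughly $I^{2(n-1)}$, not the stated $I^{n-1}$. Second, and more seriously, the ``promotion'' of an $A$-splitting to a $\Lambda$-splitting at cost $ba^{2}$ is asserted but has no mechanism behind it. The rank factorization $\iota\tilde\iota=b\cdot\mathrm{id}_{M}$ is a statement about the single $\Lambda$-module $M$; it provides no way to $\Lambda$-linearize an $A$-linear splitting of an extension of arbitrary $\Lambda$-modules such as $\syz^{n-1}_{\Lambda}W$. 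That conversion is precisely what the noether different accomplishes in Lemma~\ref{le:ndiff-annihilates}, and Theorem~\ref{th:descent}, which you cite as the template for producing $a^{2}$, goes in the opposite (easy) direction, where restriction of scalars from $\Lambda$ to $A$ is exact and no such conversion is ever needed. So the step you yourself flag as ``where the bulk of the technical effort will lie'' is not a technicality; it is the missing idea.

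For comparison, the paper's proof never takes $\Lambda$-syzygies of $W$ and never promotes splittings. It applies $\Hom_{A}(M,-)$ to the sequence $0\to A^{r}\to M\to T\to 0$ to see that $\Hom_{A}(M,A)^{r}$ is a $\Lambda$-syzygy of a $b$-torsion module, whence $b\,\Ext^{\ges 1}_{\Lambda}(\Hom_{A}(M,A),\mod\Lambda)=0$; this is how $b$ enters (not as a surrogate for the noether different). It then applies $\Hom_{A}(M,-)$ to the four-term sequence of Remark~\ref{re:ext2} for $L=\syz^{n-1}_{A}N$ (using $a\in\cande nA$) and to an $A$-free resolution of $N$, so that all maps are $\Lambda$-linear by construction; the error terms are killed by $a$ or by $I$ (the ideal for $M$, which is the correct one since only $A$-syzygies of $N$ occur), and a descending induction with Remark~\ref{re:4terms} yields that $a^{3}b^{n}I^{n-1}$ annihilates $\Ext^{\ges n}_{\Lambda}(\Hom_{A}(M,N),\mod\Lambda)$. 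Only at the end does Lemma~\ref{le:2ndsyzygy} convert this into membership in $\cande{n+2}{\Lambda}$. If you want to salvage your outline, you should either adopt this functorial strategy or genuinely bring in the noether different of $\End_{A}(M)$ over $A$ (which is nonzero here since $Q\otimes_{A}\Lambda$ is a matrix algebra), but the rank factorization alone will not do the job.
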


\begin{proof}
Set $\Lambda:=\End_{A}(M)$ and let $r$ be the rank of the $A$-module $M$. There is then an exact sequence of finitely generated $A$-modules
\[
0 \lra A^{r} \lra M \lra T \lra 0
\]
and a non-zerodivisor $b$ in $A$ with $b T=0$. Applying $\Hom_A(M,-)$ to it yields a $\Lambda$-module $D$ with $bD=0$ and $\Hom_A(M,A)^{r}=\syz_\Lambda D$. Since $r\ge 1$, it follows that
\begin{equation}
\label{eq:ascent}
b\Ext_\Lambda^{\ges 1}(\Hom_A(M,A),\mod\Lambda)=0\,.
\end{equation}
Fix an $N\in\mod A$ and set $L=\syz^{n-1}_{A}N$. For any $a\in\cande nA$, from Remark~\ref{re:ext2} one gets an exact sequence of $A$-modules.
\[
0 \lra (0\colon_L\,a) \lra L\oplus\syz_{A}L \lra A^{s} \lra L/aL \lra 0
\]
Applying $\Hom_A(M,-)$ then induces exact sequences of $\Lambda$-modules
\begin{gather*}
0 \lra \Hom_A(M,\syz_{A}(L/aL)) \lra \Hom_A(M,A)^{s} \lra E_3 \lra 0\,,\\
E_1 \lra \Hom_A(M,L)\oplus\Hom_A(M,\syz_{A} L) \lra \Hom_A(M,\syz_{A}(L/aL)) \lra E_2\,,
\end{gather*}
where $a$ annihilates the $E_i$. It follows from the first exact sequence and \eqref{eq:ascent} that 
\[
ab\Ext_\Lambda^{\ges 1}(\Hom_A(M,\syz_{A}(L/aL)),\mod\Lambda)=0\,.
\]
Then the second exact sequence and Remark~\ref{re:4terms} yield
\[
a^3b\Ext_\Lambda^{\ges 1}(\Hom_A(M,L),\mod\Lambda)=0\,.
\]
For each $j\ge0$ there is an exact sequence $0 \to \syz^{j+1}_{A}N \to A^{t_j} \to \syz^j_{A}N \to 0$ of $A$-modules, which induces an exact sequence of $\Lambda$-modules
\[
0 \lra \Hom_A(M,\syz^{j+1}_{A}N) \lra \Hom_A(M,A)^{t_j} \lra \Hom_A(M,\syz^j_{A}N) \lra \Ext_A^{1}(M,\syz^{j+1}_{A}N)\,.
\]
Using these exact sequences, \eqref{eq:ascent} and Remark~\ref{re:4terms} a descending induction on $j$ yields
\[
(a^3b)(bI)^{n-j-1}\Ext_\Lambda^{\ges(n-j)}(\Hom_A(M,\syz^j_{A}N),\mod\Lambda)=0\quad\text{for $0\le j\le n-1$.}
\]
Therefore $a^{3}b^{n}I^{n-1}$ annihilates $\Ext_\Lambda^{\ges n}(\Hom_A(M,N),\mod\Lambda)$, and so, by Lemma~\ref{le:2ndsyzygy}, also $\Ext_\Lambda^{\ges (n+2)}(\mod\Lambda,\mod\Lambda)$. This completes the proof of the theorem.
\end{proof}

\subsection*{Non-zerodivisors}
For applications, the ``useful'' cohomology annihilators are ones that are also non-zerodivisors. The development below is driven by this consideration.

\begin{lemma}
\label{le:rank0}
Let $A$ be a commutative noetherian ring. If $M$ is a finitely generated $A$-module with rank, then $\ann_{A}\Ext^{1}_{A}(M,\syz_{A}M)$ contains a non-zerodivisor.
\end{lemma}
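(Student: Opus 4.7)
The plan is to exploit the fact that ``has rank'' forces $M$ to be free after localizing at each associated prime of $A$, which kills the relevant $\Ext^1$ at those primes; prime avoidance then produces the non-zerodivisor.

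More precisely, I would first recall that $M$ having rank means $M_{\fp}$ is a free $A_{\fp}$-module (of the common rank) for every $\fp\in\ass A$. Then, starting from a surjection $F\thra M$ with $F$ a finitely generated free $A$-module defining $\syz_{A}M$, localization at such a $\fp$ splits the sequence $0\to (\syz_{A}M)_{\fp}\to F_{\fp}\to M_{\fp}\to 0$, so $(\syz_{A}M)_{\fp}$ is free over $A_{\fp}$ as well. In particular, $\Ext^{1}_{A_{\fp}}(M_{\fp},(\syz_{A}M)_{\fp})=0$, and since $\Ext$ commutes with localization for finitely generated modules over the noetherian ring $A$, we get
\[
\Ext^{1}_{A}(M,\syz_{A}M)_{\fp}\cong \Ext^{1}_{A_{\fp}}(M_{\fp},(\syz_{A}M)_{\fp})=0
\quad\text{for every $\fp\in\ass A$.}
\]

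Next, set $I=\ann_{A}\Ext^{1}_{A}(M,\syz_{A}M)$. Since $\Ext^{1}_{A}(M,\syz_{A}M)$ is a finitely generated $A$-module, the vanishing $\Ext^{1}_{A}(M,\syz_{A}M)_{\fp}=0$ is equivalent to $I\not\subseteq \fp$. Thus $I$ is not contained in any associated prime of $A$. As $A$ is noetherian, $\ass A$ is a finite set, so prime avoidance yields an element $b\in I$ with $b\notin \bigcup_{\fp\in\ass A}\fp$. Recalling that the set of zerodivisors on $A$ equals $\bigcup_{\fp\in\ass A}\fp$, this $b$ is the desired non-zerodivisor in $I$.

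There is no real obstacle here; the only point to be a bit careful about is justifying the splitting that shows $(\syz_{A}M)_{\fp}$ is free, and making sure the ``has rank'' hypothesis is used exactly in the form ``$M_{\fp}$ is free for each $\fp\in\ass A$.'' Everything else is standard localization and prime avoidance.
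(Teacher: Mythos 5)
Your argument is correct and is essentially the paper's own proof: localize at the associated primes, where the rank hypothesis makes $M_{\fp}$ free, conclude $\Ext^{1}_{A}(M,\syz_{A}M)_{\fp}=0$ there, and then use prime avoidance together with the identification of the zerodivisors with $\bigcup_{\fp\in\ass A}\fp$. The only difference is cosmetic: your step showing $(\syz_{A}M)_{\fp}$ is free is not needed, since $\Ext^{1}_{A_{\fp}}(M_{\fp},-)=0$ already follows from $M_{\fp}$ being free, regardless of the second argument.
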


\begin{proof}
For any associated prime $\fp\in\Spec A$, the $A_{\fp}$-module $M_{\fp}$ is free, so 
\[
\Ext^{1}_{A}(M,\syz_{A}M)_{\fp}\cong \Ext^{1}_{A_{\fp}}(M_{\fp},(\syz_{A}M)_{\fp})=0
\]
This means that the annihilator ideal in question is not contained in such $\fp$, as desired.
\end{proof}

\begin{lemma}
\label{le:rank}
Let $A$ be a commutative noetherian ring and $\Lambda$ be noether $A$-algebra of positive rank. The following statements hold.
\begin{enumerate}[\rm(1)]
\item
The natural map $A\to\Lambda$ is injective.
\item
If $A$ is reduced and an ideal $J\subseteq \cent \Lambda$ contains a non-zerodivisor, then so does  $J\cap A\subseteq A$.
\item
If $\can\Lambda$ contains a non-zerodivisor, then $A$ is reduced.
\end{enumerate}
\end{lemma}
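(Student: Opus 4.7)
The strategy is to prove each part in turn, reducing (3) to a simultaneous version of the argument in (2).

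For (1), I would argue that positive rank supplies an $A$-module embedding $A^r\hookrightarrow \Lambda$ with $r\ge 1$. If $a\in A$ maps to zero in $\Lambda$, then $a\cdot 1_\Lambda=0$, and centrality of $a$ forces $a\cdot\Lambda=0$, so $a$ kills the faithful submodule $A^r$ and hence vanishes in $A$.

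For (2), my plan is to fix a non-zerodivisor $x\in J\subseteq\cent\Lambda$ and show $J\cap A$ is not contained in any minimal prime $\fp$ of $A$. Since $A$ is reduced and Noetherian, the zerodivisors of $A$ form the union of its finitely many minimal primes, so prime avoidance would then produce a non-zerodivisor of $A$ inside $J\cap A$. To verify $J\cap A\not\subseteq\fp$, I would localize at $\fp$: reducedness makes $A_\fp$ a field, so the module-finite algebra $\cent\Lambda_\fp$ is a finite-dimensional $A_\fp$-vector space, in particular Artinian. A direct lifting (if $(x/1)(y/s)=0$ then $t\cdot xy=0$ in $\cent\Lambda$ for some $t\notin\fp$, so non-zerodivision of $x$ forces $ty=0$ and hence $y/s=0$) shows $x/1$ remains a non-zerodivisor of $\cent\Lambda_\fp$, hence a unit. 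Contracting the equality $J\cdot\cent\Lambda_\fp=\cent\Lambda_\fp$ to $A_\fp$ then yields $(J\cap A)_\fp=A_\fp$, as desired.

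For (3), the idea is to carry out the localization in (2) simultaneously at all associated primes of $A$ by passing to the total ring of fractions $Q:=A_S$, where $S$ is the set of non-zerodivisors of $A$. Since $A$ is Noetherian with $\dim Q=0$, the ring $Q$ is Artinian; consequently $\cent\Lambda_S$ is module-finite over $Q$ and itself Artinian. The same lifting argument shows that the image of the hypothetical non-zerodivisor $x\in\can\Lambda$ is a unit of $\cent\Lambda_S$, and by Lemma~\ref{le:ca-localize} this unit lies in $\can{\Lambda_S}$, forcing $\can{\Lambda_S}=\cent\Lambda_S$; equivalently, $\Lambda_S$ has finite global dimension. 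The positive-rank assumption yields $\Lambda_S\cong Q^r$ as a free $Q$-module of positive rank, so Lemma~\ref{le:gldim-descent} gives $\gldim Q\le\gldim\Lambda_S<\infty$. A commutative Artinian ring of finite global dimension is a finite product of fields (each local factor being a regular Artinian local ring, hence a field); as the local factors of $Q$ correspond bijectively to the primes in $\ass(A)$, each $A_\fp$ for $\fp\in\ass(A)$ is a field. This delivers both Serre's conditions $(R_0)$ (reducedness at minimal primes) and $(S_1)$ (no embedded primes, since such a $\fp$ would have $\dim A_\fp\ge 1$), whence $A$ is reduced.

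The main conceptual hurdle is in (3): from the single hypothesis that $\can\Lambda$ contains a non-zerodivisor of $\cent\Lambda$, one must control both the generic and the embedded associated primes of $A$ at once. The key insight is to localize not at a single prime but at all of $S$ simultaneously, turning the problem into one about a commutative Artinian ring of finite global dimension; Serre's criterion then finishes the argument.
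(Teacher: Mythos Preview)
Your proofs of (1) and (2) are correct. For (2) you take a different route than the paper: you localize at each minimal prime of $A$ and use that a non-zerodivisor in an Artinian ring is a unit, whereas the paper argues by lying-over for the module-finite extension $A/(J\cap A)\hookrightarrow \cent\Lambda/J$ to produce a minimal prime of $\cent\Lambda$ containing $J$. Both arguments are short and essentially equivalent in strength.

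There is, however, a genuine gap in your proof of (3). You assert that the total ring of fractions $Q=S^{-1}A$ has $\dim Q=0$, hence is Artinian. This is not automatic: the primes of $Q$ are exactly those primes of $A$ contained in some associated prime, so $\dim Q=0$ holds precisely when $A$ has no embedded associated primes, i.e.\ when $A$ satisfies Serre's condition $(S_1)$. But $(S_1)$ is half of what you are trying to prove. Without it, $\cent\Lambda_S$ need not be Artinian, so your step ``non-zerodivisor $\Rightarrow$ unit'' fails, and the rest of the argument (finite global dimension of $\Lambda_S$, hence of $Q$, hence $Q$ a product of fields) does not get off the ground. Concretely, for $A=k[x,y]_{(x,y)}/(x^2,xy)$ every non-unit is a zerodivisor, so $Q=A$ is one-dimensional.

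The paper sidesteps this by localizing at a \emph{single} associated prime $\fp$ (equivalently, any $\fp$ with $\depth A_\fp=0$) rather than at all of $S$ at once. At such a $\fp$ the module $\Lambda_\fp$ is free over $A_\fp$ by the rank hypothesis, but neither $A_\fp$ nor $\cent\Lambda_\fp$ is assumed Artinian. Instead of turning the non-zerodivisor $x$ into a unit, the paper computes $\Ext^n_{\Lambda_\fp}(k\otimes_A\Lambda_\fp,\,k\otimes_A\Lambda_\fp)\cong(\Lambda_\fp/\fm\Lambda_\fp)^t$ with $t=\dim_k\Ext^n_{A_\fp}(k,k)$; if $t\ge 1$ then $x\in\fm\Lambda_\fp$, and multiplying by a nonzero socle element of $A_\fp$ contradicts $x$ being a non-zerodivisor. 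Thus $\Ext^n_{A_\fp}(k,k)=0$, so $A_\fp$ is regular of depth zero, hence a field. This simultaneously gives $(R_0)$ and rules out embedded primes. Your localization-at-$S$ strategy can be repaired by carrying out this socle argument at each associated prime separately.
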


\begin{proof}

(1) Since $\Lambda$ has rank as an $A$-module, there is an injective homomorphism $A^{r}\to\Lambda$ of $A$-modules,  for $r:=\rank_A\Lambda$.  If $a$ is in the kernel of the ring homomorphism $A\to\Lambda$, then $a\Lambda=0$, and hence $aA^r=0$. Since $r>0$, it follows that $a=0$.

(2) Set $R:=\cent\Lambda$. Suppose that the ideal $I:=J\cap A$ of $A$ only contains zerodivisors. It is then contained in an associated prime $\fp$ of $A$. The extension $A/I\hookrightarrow R/J$ is module-finite, so there exists a prime ideal $\fq$ of $R$ containing $J$ such that $\fq\cap A=\fp$. Since $A$ is reduced, $\fp$ is a minimal prime so it follows that $\fq$ is a minimal prime of $R$; see \cite{Matsumura80}*{Theorem 5 ii)}. Hence $\fq$ consists of zerodivisors of $R$, contradicting the fact that $J$ contains a non-zerodivisor.

(3) It suffices to prove that for any $\fp\in\Spec A$, if $\depth A_{\fp}$ is zero, then $A_{\fp}$ is a field. Since ${\can \Lambda}_{\fp}$ is contained in $\can{\Lambda_{\fp}}$, by Lemma~\ref{le:ca-localize}(1), we may replace $A$ and $\Lambda$ by their localizations at $\fp$, and assume that $A$ is a local ring, say with maximal ideal $\fm$, with $\depth A=0$, and verify that  $A$ is a field. Now $\Lambda$
is a finitely generated free $A$-module.

By assumption, there exist a non-zerodivisor $x\in \cent\Lambda$ and an integer $n\ge 1$ such that $x\Ext_{\Lambda}^{n}(\mod\Lambda,\mod\Lambda)=0$. Since $A$ is a local ring of depth $0$, it suffices to verify that $A$ is regular, and this follows from the

\begin{claim}
$\Ext^{n}_{A}(k,k)=0$ for $k=A/\fm$, the residue field of $A$.
\end{claim}

Set $t:=\rank_k\Ext_{A}^n(k,k)$ and assume $t\ge 1$. As $\Lambda$ is a free $A$-module one has
\[
\Ext_{\Lambda}^n(k\otimes_A\Lambda,k\otimes_A\Lambda)\cong\Ext_A^n(k,k)\otimes_A\Lambda\cong(k\otimes_A\Lambda)^{t}\cong(\Lambda/\fm\Lambda)^{t}
\]
as $\cent\Lambda$-modules. Thus $x(\Lambda/\fm\Lambda)^{t}=0$, that is to say, $x$ belongs to $\fm\Lambda$. Since $A$ has depth zero, it possesses a nonzero socle element, say $a$. Hence $ax\in a(\fm\Lambda)=(a\fm)\Lambda=0$. This contradicts the fact that $x$ is a non-zerodivisor of $\cent\Lambda$ (recall from (1) that the map $A\to\cent\Lambda$ is injective) and justifies the claim. 
\end{proof}

\begin{corollary}
\label{co:ascent-descent-nzd}
Let $A$ be a commutative noetherian ring.
\begin{enumerate}[{\quad\rm(1)}]
\item If $\Lambda$ is a noether $A$-algebra of positive rank and $\cande n\Lambda$ contains a non-zerodivisor, then $A$ is reduced, and $\cande nA$ contains a non-zerodivisor.
\item
If  $\cande nA$ contains a non-zerodivisor, then for any torsion-free $A$-module $M$ of positive rank,  $\cande n{\End_{A}(M)}$ contains a non-zerodivisor.
\end{enumerate}
\end{corollary}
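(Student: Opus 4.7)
The plan is to deduce each part from Theorems~\ref{th:descent} and \ref{th:ascent}, combined with the non-zerodivisor extraction tools in Lemmas~\ref{le:rank0} and \ref{le:rank}.

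For (1), start with a non-zerodivisor $a\in\cande{n}\Lambda$. Since $a\in\can\Lambda$, Lemma~\ref{le:rank}(3) gives that $A$ is reduced. The key reduction is to push this non-zerodivisor down into $A$: the principal ideal $(a)\subseteq\cent\Lambda$ contains the non-zerodivisor $a$, so Lemma~\ref{le:rank}(2) (using $A$ reduced and $\Lambda$ of positive rank) produces a non-zerodivisor $c$ of $A$ lying in $(a)\cap A$. Being a $\cent\Lambda$-multiple of $a$, this $c$ still belongs to the ideal $\cande{n}\Lambda$. Applying \eqref{eq:descent1} to $c$ in place of $a$ gives $c^{2}I^{n}\subseteq \ann_{A}\Ext_{A}^{\ges n}(\mod\Lambda,\mod A)$, where $I=\ann_{A}\Ext_{A}^{1}(\Lambda,\syz_{A}\Lambda)$, the inclusion making sense because $c,I\subseteq A$. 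Lemma~\ref{le:rank0} supplies a non-zerodivisor $i\in I$ (using the positive rank of $\Lambda$), and then $c^{2}i^{n}$ is a non-zerodivisor of $A$ inside that annihilator. Feeding this into \eqref{eq:descent2}, which provides a non-zerodivisor $b\in A$ with $b\cdot\ann_{A}\Ext_{A}^{\ges n}(\mod\Lambda,\mod A)\subseteq \cande{n}A$, one obtains $bc^{2}i^{n}$ as the desired non-zerodivisor of $A$ in $\cande{n}A$.

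For (2), take a non-zerodivisor $a\in\cande{n}A$, and via Lemma~\ref{le:rank0} a non-zerodivisor $i\in I:=\ann_{A}\Ext_{A}^{1}(M,\syz_{A}M)$ (this is where the positive rank hypothesis on $M$ enters). Theorem~\ref{th:ascent} then furnishes a non-zerodivisor $b\in A$ with $ba^{3}I^{n-1}\subseteq \cande{n+2}{\End_{A}(M)}$, so the element $ba^{3}i^{n-1}$, a non-zerodivisor of $A$, lies in that ideal. To promote it to a non-zerodivisor of $\End_{A}(M)$, note that $A$ acts on $\End_{A}(M)$ through $\cent{\End_{A}(M)}$ by scalar multiplication on $M$: if $c\in A$ is a non-zerodivisor of $A$ and $c\vf=0$ for some $\vf\in\End_{A}(M)$, then $c\vf(m)=0$ for every $m\in M$, and the torsion-freeness of $M$ forces $\vf=0$. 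Hence $\cande{n+2}{\End_{A}(M)}$, and \emph{a fortiori} $\can{\End_{A}(M)}$, contains a non-zerodivisor.

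The main obstacle is the replacement of $a$ by $c\in A$ at the start of part~(1). A more direct attempt to combine \eqref{eq:descent1} and \eqref{eq:descent2} would require checking that the $\cent\Lambda$-ideal $a^{2}I^{n}\cent\Lambda$ contains a non-zerodivisor of $\cent\Lambda$ before one could apply Lemma~\ref{le:rank}(2). This is genuinely delicate: a non-zerodivisor of $A$ need not remain a non-zerodivisor of $\cent\Lambda$ (already in the commutative extension $k[t]\hookrightarrow k[t,s]/(s^{2},ts)$ the element $t$ kills $s$), so $I\cent\Lambda$ may well lie inside an associated prime of $\cent\Lambda$. Descending $a$ first to an element of $A$ via Lemma~\ref{le:rank}(2) bypasses this issue completely, at the cost of first establishing reducedness of $A$; this explains why the two conclusions of part~(1) are proved together rather than independently.
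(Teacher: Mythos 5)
Your argument is essentially the paper's own proof: part (1) combines Lemma~\ref{le:rank}(3), Lemma~\ref{le:rank}(2), Lemma~\ref{le:rank0} and Theorem~\ref{th:descent} exactly as the paper does (applying Lemma~\ref{le:rank}(2) to the principal ideal $(a)$ rather than to $\cande{n}{\Lambda}$ itself is an immaterial variation), and part (2) is the paper's combination of Lemma~\ref{le:rank0}, Theorem~\ref{th:ascent} and the torsion-freeness of $M$ to push non-zerodivisors of $A$ into $\cent{\End_A(M)}$. Your candid landing in $\cande{n+2}{\End_{A}(M)}$ rather than $\cande{n}{\End_{A}(M)}$ reflects what Theorem~\ref{th:ascent} literally gives and is the same index shift that the paper's own proof of (2) silently elides, so it is not a defect of your argument relative to the paper.
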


\begin{proof}
(1) Given such a $\Lambda$, Lemma~\ref{le:rank0} implies that the ideal $I:=\ann_{A}\Ext^{1}_{A}(\Lambda,\syz_{A}\Lambda)$ of $A$ contains a non-zerodivisor, and it follows from Lemma~\ref{le:rank}(3) that $A$ is reduced. Then Lemma~\ref{le:rank}(2) yields that the ideal $A\cap \cande n{\Lambda}$ of $A$ contains a non-zerodivisor as well. Products of non-zerodivisors remain non-zerodivisors so it remains to apply Theorem~\ref{th:descent}.

(2) Set $\Lambda:=\End_{A}(M)$ and $I:=\ann_{A}\Ext^{1}_{A}(M,\syz_{A}M)$. By  Lemma~\ref{le:rank0}, the ideal $I$ of $A$ contains a non-zerodivisor, so it follows from Theorem~\ref{th:ascent} that the annihilator ideal of $\Ext^{n}_{\Lambda}(\mod\Lambda,\mod\Lambda)$, viewed 
as an $A$-module, contains a non-zerodivisor as well. It remains to note that, since the $A$-module $M$ is torsion-free, the map $A\to \Lambda$ takes non-zerodivisors of $A$ to non-zerodivisors of $\cent\Lambda$.
\end{proof}

\begin{corollary}
\label{co:descent}
Let $A$ be a reduced ring. If there exists a noether $A$-algebra $\Lambda$ of positive rank and a finitely generated $A$-module $G$ such that $\mod\Lambda \subseteq \thick_A^{n}(G)$ for some $n\ge 1$ (for example, if $\gldim\Lambda\le n$), then $\cande {n+1}A$ contains a non-zerodivisor.
\end{corollary}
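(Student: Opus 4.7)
The plan is to produce a non-zerodivisor in $\cande{n+1}A$ by combining the ``building up'' machinery of Section~\ref{se:generators} with the descent in Theorem~\ref{th:descent}. Concretely, I will extract from the thick-subcategory hypothesis an ideal $I\subseteq A$ that annihilates $\Ext_{A}^{\ges n}(\mod\Lambda,\mod A)$ and contains a non-zerodivisor, and then transport this annihilator into $\cande nA$ (hence into $\cande{n+1}A$) via the descent statement.

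First I would pass from $\thick$-subcategories to $|\cdot|$-subcategories: applying Proposition~\ref{pr:strong-generation}(2) to the hypothesis $\mod\Lambda\subseteq\thick_{A}^{n}(G)$ yields
\[
\syz_{A}^{n-1}(\mod\Lambda)\ \subseteq\ |\bigcup_{i=0}^{2(n-1)}\syz_{A}^{i}G|_{n}\ \subseteq\ |\syz_{A}^{*}G|_{n}\,.
\]
Setting $I:=\ann_{A}\Ext_{A}^{1}(G,\syz_{A}G)$, Lemma~\ref{le:mod-ca} applied over the commutative ring $A$ then gives $I^{n}\cdot\Ext_{A}^{\ges 1}(\syz_{A}^{n-1}N,\mod A)=0$ for every $N\in\mod\Lambda$. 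Shifting degrees via Remark~\ref{re:ext-and-syzygy} promotes this to $I^{n}\subseteq \ann_{A}\Ext_{A}^{\ges n}(\mod\Lambda,\mod A)$.

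Next I would verify that $I$ contains a non-zerodivisor of $A$; this is where the reducedness hypothesis enters. Because $A$ is reduced, each of its associated primes $\fp$ is minimal, so $A_{\fp}$ is a field, and hence $\Ext_{A}^{1}(G,\syz_{A}G)_{\fp}\cong \Ext_{A_{\fp}}^{1}(G_{\fp},(\syz_{A}G)_{\fp})=0$. Therefore $I\not\subseteq\fp$ for every associated prime $\fp$, and prime avoidance shows that $I$, and so $I^{n}$, contains a non-zerodivisor. Finally, since $\Lambda$ has positive rank over $A$, the second assertion of Theorem~\ref{th:descent} furnishes a non-zerodivisor $b\in A$ with $b\cdot\ann_{A}\Ext_{A}^{\ges n}(\mod\Lambda,\mod A)\subseteq\cande nA$. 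Combining this with the previous paragraph, $bI^{n}\subseteq\cande nA\subseteq\cande{n+1}A$, and $bI^{n}$ contains a non-zerodivisor since products of non-zerodivisors remain non-zerodivisors.

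The only bookkeeping point to watch is the off-by-one in passing from $\thick_{A}^{n}$ to $|\cdot|_{n}$: Proposition~\ref{pr:strong-generation}(2) costs one syzygy, which is precisely why the argument in fact lands in the smaller ideal $\cande nA$, and only then is weakened to the $\cande{n+1}A$ of the statement. For the parenthetical special case $\gldim\Lambda\le n$ one can bypass the thick-subcategory machinery entirely: then $\cande{n+1}\Lambda=\cent\Lambda$ trivially contains non-zerodivisors, and Corollary~\ref{co:ascent-descent-nzd}(1) delivers the conclusion at once.
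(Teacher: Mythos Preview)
Your proof is correct and follows essentially the same route as the paper's. The paper's argument is terse: it asserts that a non-zerodivisor $x\in A$ annihilates $\Ext_A^{\ges 1}(G,\mod A)$ (this is your $I$, together with the reducedness argument you spell out), then claims directly that $x^{n}$ annihilates $\Ext_A^{\ges n}(\mod\Lambda,\mod A)$, and finishes with \eqref{eq:descent2}. Where the paper leaves the passage from $\thick_A^{n}(G)$ to that annihilation statement as an implicit induction on $n$, you instead route through Proposition~\ref{pr:strong-generation}(2) and Lemma~\ref{le:mod-ca}; this is a perfectly valid (and more explicit) way to justify that step, and your observation that the argument already lands in $\cande nA$ before weakening to $\cande{n+1}A$ is accurate.
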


\begin{proof}
There is a non-zerodivisor $x\in A$ with $x\Ext_A^{\ges 1}(G,\mod A)=0$ by Lemma \ref{le:ca-module}(1). It then follows that  $x^{n}\Ext_{A}^{\ges n}(\mod\Lambda,\mod A)=0$. The assertion follows from \eqref{eq:descent2}.
\end{proof}

The preceding result applies when $A$ is a subring of a (commutative) regular ring $S$ of finite Krull dimension such that $S$ is a finitely generated $A$-module. Quotient singularities provide one such family of examples, and this seems worth recording.

\begin{corollary}
\label{co:quotient-singularities}
Let $k$ be a field and $S$ either $k[\ulx ]$, the polynomial ring, or  $k\llbracket \ulx\rrbracket$, the formal power series ring, in commuting indeterminates  $\ulx = x_{1},\dots, x_{d}$ over $k$. If $G$ is a finite subgroup of the $k$-algebra automorphisms of $S$, then $\cande{d+1}{S^{G}}$ is non-zero. 
\end{corollary}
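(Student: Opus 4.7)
The plan is to apply Corollary~\ref{co:ascent-descent-nzd}(1) with $A:=S^G$ and $\Lambda:=S$; this in fact yields the stronger conclusion that $\cande{d+1}{S^G}$ contains a non-zerodivisor of $S^G$. To invoke that corollary one needs to check (i) that $S$ is a noether $A$-algebra of positive rank, and (ii) that $\cande{d+1}S$ contains a non-zerodivisor of $S$.

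Claim (ii) is immediate: since $S$ is regular of Krull dimension $d$ in both the polynomial and power series cases, one has $\gldim S=d$; Example~\ref{ex:regular-rings} then gives $\cande{d+1}S=S$, which certainly contains the non-zerodivisor $1$.

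For (i), finiteness of $S$ as an $A$-module is classical --- Emmy Noether's theorem in the polynomial case, and an application of Nakayama's lemma to the complete local ring $A$ in the power series case, using that $\fm_A S$ is $\fm_S$-primary so that $S/\fm_A S$ is a finite-dimensional $k$-vector space. Thus $S$ is a noether $A$-algebra, and since $A$ is noetherian (by Eakin--Nagata, or directly), the setup of Section~\ref{se:ascent-descent} applies. Because $G$ acts faithfully on the domain $S$, the induced extension of fraction fields $\mathrm{Frac}(S)/\mathrm{Frac}(A)$ is Galois of degree $|G|$, and consequently $\rank_A S=|G|\ge 1$.

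The only mildly delicate input is the finiteness of $S$ over $S^G$ in the power series case; with that in place, Corollary~\ref{co:ascent-descent-nzd}(1) applied with $n=d+1$ completes the proof.
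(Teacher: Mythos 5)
Your argument is correct and is essentially the paper's: both proofs descend annihilation from the regular ring $S$ (of global dimension $d$) along the module-finite extension $S^{G}\subseteq S$ of positive rank, the paper by invoking Corollary~\ref{co:descent} directly with $\gldim S=d$, and you by invoking its companion Corollary~\ref{co:ascent-descent-nzd}(1) via $\cande{d+1}S=S$, both of which rest on Theorem~\ref{th:descent} and yield the (slightly stronger) conclusion that $\cande{d+1}{S^G}$ contains a non-zerodivisor. The one input neither argument reproves --- that $S$ is a finitely generated $S^{G}$-module, which in the power series case is precisely why the paper cites \cite{SchejaStorch73} --- is the classical fact you flag, so treating it at citation level is fine.
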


\begin{proof}

The only thing to note is that $S$ is a finitely generated module over $S^{G}$ (see, for example, \cite{SchejaStorch73}*{Propositions (19.3) and (19.4)}) and that the global dimension of $S$ equals $d$. Thus, Corollary~\ref{co:descent} applies.
\end{proof}

\begin{remark}
\label{re:ncr}

Following \cite{DaoIyamaTakahashiVial12}, we say that $A$ admits a \emph{noncommutative resolution} if there exists a finitely generated faithful $A$-module $M$ such that $\End_A(M)$ has finite global dimension. Corollary~\ref{co:descent} thus implies that when $A$ is a domain admitting a noncommutative resolution, $\can A$ contains a non-zerodivisor. Since a finitely generated faithful module $M$ over a domain $A$ has positive rank, so does $\End_A(M)$ as an $A$-module. 
\end{remark}

\begin{remark}
The hypothesis in Corollary~\ref{co:ascent-descent-nzd} that $\Lambda$ is module-finite over $A$ is necessary: If  $A$ is a non-reduced ring possessing a prime ideal $\fp$ such that $A_\fp$ is a field (for example,. $k[[x,y]]/(x^2y)$), then $\can{A_{\fp}}$ contains a non-zerodivisor.
The condition that $\Lambda$  has positive rank over $A$ is also needed: For any $A$ and maximal ideal $\fm$ of $A$, the ideal $\can{A/\fm}$ contains a non-zerodivisor.
\end{remark}

\subsection*{Commutative rings}
Here is one application of Theorems \ref{th:ascent} and \ref{th:descent}. Note that any Nagata ring (hence any excellent ring) satisfies its hypothesis, by definition; see \cite{Matsumura80}*{\S31}.  Thus the result below, whose statement was suggested to us by Ken-ichi Yoshida, subsumes ~\cite{Wang94}*{Proposition 2.1} that deals with one-dimensional reduced complete local rings.

\begin{corollary}
\label{co:yoshida}
Let $R$ be a commutative noetherian ring such that its integral closure $\overline{R}$ is finitely generated as an $R$-module.
Then the ideal $\cande n R\subseteq R$ contains a non-zerodivisor if and only if so does $\cande n{\overline{R}}\subseteq \overline{R}$.
\end{corollary}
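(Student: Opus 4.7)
The plan is to apply Corollary~\ref{co:ascent-descent-nzd} with $A=R$, using $\overline R$ both as a noether $R$-algebra (for descent) and as a torsion-free $R$-module (for ascent). First I would record that $\overline R$ has positive rank over $R$ in the sense used in Theorem~\ref{th:descent}, and is torsion-free as an $R$-module. Both properties follow from the inclusion $\overline R\subseteq Q(R)$: the ring $R$ embeds in $\overline R$, and any finite set of $R$-module generators of $\overline R$ lies in $Q(R)$, so clearing denominators produces a non-zerodivisor $b\in R$ with $b\overline R\subseteq R$, yielding an exact sequence $0\to R\to\overline R\to T\to 0$ with $bT=0$; and non-zerodivisors of $R$ are units of $Q(R)$, hence act as non-zerodivisors on the submodule $\overline R\subseteq Q(R)$.

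For the descent direction, suppose $\cande n{\overline R}$ contains a non-zerodivisor. Since $\overline R$ is commutative, $\cent{\overline R}=\overline R$, so Corollary~\ref{co:ascent-descent-nzd}(1), applied with $\Lambda=\overline R$, gives directly that $R$ is reduced and that $\cande n R$ contains a non-zerodivisor.

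For the ascent direction, suppose $\cande n R$ contains a non-zerodivisor. Taking the trivial noether $R$-algebra $\Lambda=R$ in Corollary~\ref{co:ascent-descent-nzd}(1), I first deduce that $R$ is reduced. The pivotal step is then the identification $\End_R(\overline R)=\overline R$, realized via the natural map sending $r\in\overline R$ to multiplication by $r$. To verify this, I would observe that $\overline R\otimes_R Q(R)=Q(R)$: inverting the set $S$ of non-zerodivisors of $R$ in the chain $R\subseteq\overline R\subseteq Q(R)$ gives $Q(R)\subseteq S^{-1}\overline R\subseteq Q(R)$. Any $R$-linear endomorphism $f$ of $\overline R$ therefore extends to a $Q(R)$-linear map $Q(R)\to Q(R)$, which must be multiplication by $f(1)$; since this extension restricts to $f$ and $1\in\overline R$, we get $f(1)\in\overline R$ and $f$ is multiplication by $f(1)$. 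Applying Corollary~\ref{co:ascent-descent-nzd}(2) with $M=\overline R$ then produces a non-zerodivisor in $\cande n{\End_R(\overline R)}=\cande n{\overline R}$.

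The main obstacle, such as it is, is precisely this last identification $\End_R(\overline R)=\overline R$ in the merely reduced (non-domain) case: the domain case is classical, and the general case reduces to it through the observation that $\overline R$ is integrally closed inside its total quotient ring $Q(R)$. Everything else is bookkeeping around the two parts of Corollary~\ref{co:ascent-descent-nzd}.
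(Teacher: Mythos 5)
Your proposal is correct and follows essentially the same route as the paper: positive rank of $\overline R$ via a non-zerodivisor $b$ with $b\overline R\subseteq R$, descent by Corollary~\ref{co:ascent-descent-nzd}(1) with $\Lambda=\overline R$, and ascent by identifying $\End_R(\overline R)\cong\overline R$ and applying Corollary~\ref{co:ascent-descent-nzd}(2). The only differences are cosmetic: you spell out the (correct) verification of $\End_R(\overline R)=\overline R$ via $S^{-1}\overline R=\QQ(R)$, which the paper leaves as an easy check, and the reducedness remark in the ascent direction is harmless but not needed.
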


\begin{proof}
We have $R\subseteq\overline{R}\subseteq\QQ(R)$, where $\QQ(R)$ denotes the total ring of fractions of $R$. As $\overline{R}$ is module-finite over $R$, there is a non-zerodivisor $a$ of $R$ with $a\overline{R}\subseteq R$. Hence $\rank_R(\overline{R}/R)=0$, and $\rank_R(\overline{R})=1$.  The `if' part now follows from Corollary~\ref{co:ascent-descent-nzd}(1).

Now consider maps $\phi\colon \overline{R}\to\End_R(\overline{R})$ and $\psi\colon \End_R(\overline{R})\to\overline{R}$ by $\phi(x)(y)=xy$ for $x,y\in\overline{R}$ and $\psi(f)=f(1)$. It is easy to verify that these are mutually inverse bijections. Therefore $\overline{R}\cong\End_R(\overline{R})$. Since $\overline{R}$ is torsionfree as an $R$-module, Corollary~\ref{co:ascent-descent-nzd}(2) completes the proof of the converse.
\end{proof}

The preceding result can be used to give another proof of Corollary~\ref{co:nagata1}, though the crux of argument is essentially the same.

\section{Strong generators for derived categories}
\label{se:dcat}
Let $\Lambda$ be a noetherian ring and $\dbcat(\mod \Lambda)$ the bounded derived category of $\mod\Lambda$.  In analogy with the construction of $\thick^{n}_{\Lambda}(C)$ in Section~\ref{se:generators}, for any complex $C$ in $\dbcat(\mod\Lambda)$ one can define a tower of subcategories of $\dbcat(\mod\Lambda)$:
\[
\{0\}  \subseteq \thick^{1}_{\sfD}(C) \subseteq \cdots \subseteq \thick^{n}_{\sfD}(C) \subseteq  \thick^{n+1}_{\sfD}(C)\subseteq
\]
The only difference is that one is allowed the use of $\{\susp^{i}C\}_{i\in\bbZ}$, the suspensions of $C$, in building $\thick^{1}_{\sfD}(C)$, and that exact sequences are replaced by exact triangles. For details, see, for example, \cite{BondalVandenBergh03}. As in \cite{BondalVandenBergh03}, see also \cite{Rouquier08a}, we say that $\dbcat(\mod\Lambda)$ is \emph{strongly finitely generated}  if there exists a $C$ and an integer $n$ such that $\thick^{n}_{\sfD}(C)=\dbcat(\mod \Lambda)$. Such a $C$ is then a \emph{strong generator} for $\dbcat(\mod \Lambda)$.

In the same vein, one can construct a tower of subcategories in $\{\thick^{n}_{\dsing}(C)\}_{n\ges 0}$ in $\dsing(\mod\Lambda)$, the \emph{singularity category} of $R$, introduced by Buchweitz~\cite{Buchweitz87} under the name `stable derived category'.
This then leads to a notion of a strong generator for this category.

The gist of the result below, which is well-known, is that any strong generator for $\mod \Lambda$, in the sense of Corollary~\ref{co:strong-generation}, is also a strong generator for $\dbcat(\mod\Lambda)$ and for $\dsing(\mod\Lambda)$.

\begin{lemma}
\label{le:dcat}
Let $\Lambda$ be a noetherian ring and $G$ a finitely generated $\Lambda$-module.
\begin{enumerate}[{\quad\rm(1)}]
\item
If $|\syz^{s}(\mod\Lambda)|\subseteq \sad{G}_{n}$ for  integers $s,n\ge 1$, then $\dsing(\mod\Lambda)= \thick^{n+1}_{\dsing}(G)$.
 \item
If $\mod\Lambda=\thick^{n}_{\Lambda}(G)$ for an integer $n\ge 1$, then $\dbcat(\mod\Lambda)=\thick^{n+1}_{\sfD(\Lambda)}(G)$.
\end{enumerate}
\end{lemma}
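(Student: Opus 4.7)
The plan is to transfer the module-category hypothesis into the ambient triangulated category and then account for the single additional extension step needed to absorb general bounded complexes.

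For part~(2), I would begin by showing that the inclusion $\mod\Lambda\hookrightarrow\dbcat(\mod\Lambda)$ sending a module to its stalk complex in degree zero carries $\thick^{k}_{\Lambda}(G)$ into $\thick^{k}_{\sfD(\Lambda)}(G)$, by induction on $k$: each defining short exact sequence of modules yields an exact triangle in $\dbcat(\mod\Lambda)$, and the two filtrations advance step for step. The hypothesis $\mod\Lambda=\thick^{n}_{\Lambda}(G)$ therefore places every finitely generated module in $\thick^{n}_{\sfD(\Lambda)}(G)$. Given $C\in\dbcat(\mod\Lambda)$, represented by a bounded complex of finitely generated modules, I would produce a single exact triangle $X\to C\to Y\to\susp X$ with $X\in\thick^{n}_{\sfD(\Lambda)}(G)$ and $Y\in\thick^{1}_{\sfD(\Lambda)}(G)=\add\{\susp^{i}G\}$; the standard triangle-combination rule then places $C$ in $\thick^{n+1}_{\sfD(\Lambda)}(G)$.

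The main obstacle is this single-step decomposition. A naive amplitude induction using brutal or smart truncation yields a level bound that scales with the cohomological amplitude of $C$, not the uniform $n+1$. I expect the clean argument to exploit that the cohomology of $C$ already lies in $\thick^{n}_{\Lambda}(G)$ by the first step, so that a suitable formal complex $Y=\bigoplus_{i}\susp^{-i}G^{r_{i}}\in\thick^{1}_{\sfD(\Lambda)}(G)$, with ranks tuned to the cohomology of $C$, admits a map to $C$ whose cone $X$ sits at module level $n$ rather than accumulating levels through iterated truncation.

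Part~(1) follows the same template inside $\dsing(\mod\Lambda)$, with an essential simplification: syzygies become shifts and projectives vanish. The hypothesis $|\syz^{s}(\mod\Lambda)|\subseteq\sad{G}_{n}$, together with the inclusion $\sad{G}_{n}\subseteq\thick^{n}_{\dsing}(G)$ (using that each syzygy of $G$ is a shift of $G$ in the singularity category) and the closure of $\thick^{n}_{\dsing}$ under shifts, forces every module into $\thick^{n}_{\dsing}(G)$ via the isomorphism $\syz^{s}M\cong\susp^{-s}M$ in $\dsing(\mod\Lambda)$. One additional extension step to $\thick^{n+1}_{\dsing}(G)$ then accommodates general bounded complexes by the same consolidation as in part~(2).
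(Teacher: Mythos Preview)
Your setup for (2) is right, but you stop precisely at the crux: you need a single exact triangle that expresses an arbitrary bounded complex $C$ in terms of complexes with zero differential. Your speculation about mapping $\bigoplus_i \susp^{-i} G^{r_i} \to C$ does not work---there is no mechanism forcing the cone to have level at most $n$, since the cohomology of $C$ need not lie in $\add G$, and no ``tuning of ranks'' repairs this. The paper's device is much more elementary: take $Z$ and $B$ to be the graded modules of cycles and boundaries of $C$, each regarded as a complex with zero differential. The differential of $C$ induces an isomorphism $C/Z\cong\susp B$, so there is a degreewise split short exact sequence $0\to Z\to C\to\susp B\to 0$ of complexes, hence an exact triangle. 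Each $Z^i$ and $B^i$ is a finitely generated module, so $Z$ and $\susp B$ are finite direct sums of shifts of modules and lie in $\thick^n_{\sfD(\Lambda)}(G)$ by your first step; the single triangle then places $C$ at bounded level. (Strictly, combining two objects of level $n$ yields level $2n$ rather than $n+1$; the paper's stated bound is loose here, but uniformity in $C$ is all that is needed for the applications.)

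For (1) you are on the right track, but you should not defer to the consolidation step from (2). In $\dsing(\mod\Lambda)$ every object is already isomorphic to a shift of a finitely generated module: replace $C$ by a bounded-above complex of finitely generated projectives and peel off the projective terms, which vanish in $\dsing$, to reach a shifted syzygy module. Your argument via $\syz^i G\cong\susp^{-i}G$ in $\dsing$ correctly gives $\sad{G}_n\subseteq\thick^n_{\dsing}(G)$, and then the hypothesis together with $M\cong\susp^{s}\syz^{s}M$ puts every module---hence every object---into $\thick^n_{\dsing}(G)$. No extra extension step is required, so routing through your incomplete part (2) is both unnecessary and circular.
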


\begin{proof}
We view $\mod\Lambda$ as a subcategory of $\dbcat(\mod\Lambda)$, and also of $\dsing(\Lambda)$, as usual. 

(1) Since any projective module is zero in $\dsing(\Lambda)$, for any finitely generated $\Lambda$-module $M$ and integer $i$, there is an isomorphism $\syz^{i}M\cong \susp^{-i}M$ in $\dsing(\Lambda)$. Given this, it is easy to verify that $\sad{G}_{n}\subseteq\thick^{n}_{\dsing}(G)$ for each integer $n$, and then (1) follows.

(2) It is clear from the constructions that $\thick_{\Lambda}^{n}(G) \subseteq\thick^{n}_{\sfD}(G)$ for each integer $n$.  Fix a $C$ in $\dbcat(\mod\Lambda)$, and let $Z$ and $B$ denote the cycles and the boundaries of $C$, respectively. As $Z$ and $B$ are graded $\Lambda$-modules, they are in $\thick^{t}_{\sfD}(G)$. Viewing them as complexes, with zero differential, there is an exact sequence of complexes of $\Lambda$-modules
\[
0\lra Z \lra C \lra \susp B \lra 0\,.
\]
It thus follows that $C$ is in $\thick^{t+1}_{\sfD(\Lambda)}(G)$, as desired.
\end{proof}

In view of Corollary~\ref{co:strong-generation}(1) and Lemma~\ref{le:dcat}(2), the following result is a direct consequence of Theorems~\ref{th:complete-equicharacteristic} and \ref{th:eft}.

\begin{corollary}
\label{co:dcat-generators}
Let $R$ be a commutative ring. If $R$ is an equicharacteristic excellent local ring or essentially of finite type over a field, then $\dbcat(R)$ is strongly finitely generated. \qed
\end{corollary}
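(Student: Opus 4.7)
The plan is to chain together the strong-generator results for $\mod R$ established earlier in the paper with the translation between module-theoretic strong generation and triangulated strong generation recorded in Lemma~\ref{le:dcat}. Since $R$ is commutative, this is essentially a bookkeeping exercise once the modules side is in hand.

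First, I would split into the two cases according to which class $R$ belongs to. If $R$ is essentially of finite type over a field, Theorem~\ref{th:eft} produces a finitely generated $R$-module $G$ and an integer $n$ such that $\syz^{d}_{R}(\mod R)\subseteq |G|_{n}$, where $d=\dim R$. If $R$ is an equicharacteristic excellent local ring, Theorem~\ref{th:complete-equicharacteristic} produces the analogous statement, with $s=3d$ in general (or $s=d$ in the complete case). In either case, after possibly replacing $G$ by $G\oplus R$, we have a finitely generated $R$-module $G$ and integers $s,n\ges 1$ with $\syz^{s}_{R}(\mod R)\subseteq |G|_{n}$ and $R$ a direct summand of $G$; that is to say, $G$ is a strong generator for $\mod R$ in the sense of Section~\ref{se:generators}.

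Next, I would apply Corollary~\ref{co:strong-generation}(1) to promote the filtration statement in $\mod R$ to a statement about the thick subcategory of $\mod R$: it yields
\[
\mod R = \thick^{s+n}(G\oplus R)\,.
\]
Finally, feeding this into Lemma~\ref{le:dcat}(2) gives
\[
\dbcat(\mod R) = \thick^{s+n+1}_{\sfD(R)}(G\oplus R)\,,
\]
so $G\oplus R$ is a strong generator for $\dbcat(R)$, whence $\dbcat(R)$ is strongly finitely generated.

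There is no real obstacle; the only content is the invocation of Theorems~\ref{th:complete-equicharacteristic} and \ref{th:eft}, which are where the nontrivial work (using the noether different, separable noether normalization in the complete case, and a faithfully flat descent from the completion in the excellent case, plus base change to the algebraic closure in the essentially-of-finite-type case) has already been done. Once we have a strong generator for $\mod R$, passage to $\dbcat(R)$ is a formal consequence of the two lemmas cited above. If desired, one can record the explicit bound on the generation time, namely $s+n+1$, where $s$ and $n$ come from the respective theorem used.
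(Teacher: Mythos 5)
Your proposal is correct and follows exactly the paper's own route: invoke Theorems~\ref{th:complete-equicharacteristic} and \ref{th:eft} to obtain a strong generator for $\mod R$, then pass through Corollary~\ref{co:strong-generation}(1) and Lemma~\ref{le:dcat}(2) to conclude strong finite generation of $\dbcat(R)$. Nothing is missing; the explicit bound $s+n+1$ you record is a harmless bonus.
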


The part of this result dealing with rings essentially of finite type over a field was proved by Rouquier~\cite{Rouquier08a}*{Theorem~7.38} when the field is perfect, and extended to the general case by Keller and Van den Bergh~\cite{KellerVandenBergh08}*{Proposition~5.1.2}. For complete local rings containing a perfect field, the result was proved by Aihara and Takahashi~\cite{AiharaTakahashi11}*{Main Theorem}.

\begin{remark}
\label{re:dcat-not-generated}
Example~\ref{ex:non-open-loci} furnishes  rings (even commutative noetherian) whose module categories do not have strong generators, since the singular loci of the rings there are not closed; see Theorem~\ref{th:generator-sing}. For other examples, see \cite{DaoTakahashi11}*{Theorem~4.4}.
\end{remark}

Osamu Iyama asked us if it would be possible to bound the dimension of $\dbcat(R)$ in terms of the least integer $n$ such that $\cande nR$ contains a non-zerodivisor. We do not know the answer, but offer the following examples. We thank Kei-ichiro Iima for suggesting the second one.

\begin{example}
Let $k$ be a perfect field of characteristic $\ge 3$, and set
\[
 R:=k[[x,y,z,w]]/(x^2-y^2,y^2-z^2,z^2-w^2)\quad \text{and}\quad J:=(xyz,xyw,xzw,yzw)\subset R\,.
 \]
Then $R$ is a one-dimensional reduced complete intersection and $J$ is its Jacobian ideal. The element $xyz$, which is a non-zerodivisor on $R$, is contained in $\cande 2R$, by \cite{Wang94}*{Theorem 5.3}. However there are inequalities
\[
\dim \dbcat(R)\ge \dim \dsing(R) \ge {\mathrm{codim}}\, R-1=2\,,
\]
where the first one is clear and the second one follows from \cite{BerghIyengarKrauseOppermann10}*{Corollary~5.10}.

Here is a example that is also a domain: Let $k$ be a field and let $R$ the numerical semigroup ring $k[[t^{16},t^{17},t^{18},t^{20},t^{24}]]$, viewed as a subring of $k[[t]]$. Then $R$ is isomorphic to $k[[x,y,z,w,v]]/(y^2-xz,z^2-xw,w^2-xv,v^2-x^3)$, so a complete intersection domain. By \cite{Wang94}*{Proposition~3.1} we have $t^{64}R\subseteq\cande2A$, while 
\[
\dim\dbcat(R)\ge\dim\dsing(R)\ge\operatorname{codim}R-1=3\,.
\]
\end{example}

\subsection*{A decomposition of the derived category}
As is apparent from what has been discussed so far, the existence of cohomology annihilators of a ring has an impact on structure of its module category, and hence on its derived category. There is even a clear and direct connection between the two: When $\Lambda$ is noether algebra, $\cande nR$ contains a non-zerodivisor $a$ if and only if $\dbcat(\Lambda)$ is a product of the subcategory consisting of 
complexes annihilated by $a$ and $\thick_{\sfD}^{n}(\Lambda)$. A proof of this result will be presented in \cite{IyengarTakahashi14b}.

\begin{bibdiv}
\begin{biblist}

\bib{AiharaTakahashi11}{article}{
author={Aihara, T.}, 
author={Takahashi, R.},
title={Generators and dimensions of derived categories of modules}, 
journal={Comm. Algebra},
status={to appear},
eprint={arXiv:1106.0205v3 [math.AC]},
}

\bib{AuslanderGoldman60}{article}{
   author={Auslander, M.},
   author={Goldman, O.},
   title={The Brauer group of a commutative ring},
   journal={Trans. Amer. Math. Soc.},
   volume={97},
   date={1960},
   pages={367--409},
   issn={0002-9947},
   review={\MR{0121392 (22 \#12130)}},
}

\bib{BerghIyengarKrauseOppermann10}{article}{
   author={Bergh, P. A.},
   author={Iyengar, S. B.},
   author={Krause, H.},
   author={Oppermann, S.},
   title={Dimensions of triangulated categories via Koszul objects},
   journal={Math. Z.},
   volume={265},
   date={2010},
   number={4},
   pages={849--864},
   issn={0025-5874},
   review={\MR{2652539 (2011f:18016)}},
   doi={10.1007/s00209-009-0545-4},
}

\bib{BondalVandenBergh03}{article}{
   author={Bondal, A.},
   author={van den Bergh, M.},
   title={Generators and representability of functors in commutative and
   noncommutative geometry},
   language={English, with English and Russian summaries},
   journal={Mosc. Math. J.},
   volume={3},
   date={2003},
   number={1},
   pages={1--36, 258},
   issn={1609-3321},
   review={\MR{1996800 (2004h:18009)}},
}

\bib{Buchweitz87}{article}{
author= {Buchweitz, R.-O.},
title = {Maximal Cohen-Macaulay modules and Tate-cohomology over Gorenstein rings},
date={1987},
status={preprint},
eprint={https://tspace.library.utoronto.ca/handle/1807/16682},
}

\bib{Cohen46}{article}{
   author={Cohen, I. S.},
   title={On the structure and ideal theory of complete local rings},
   journal={Trans. Amer. Math. Soc.},
   volume={59},
   date={1946},
   pages={54--106},
   issn={0002-9947},
   review={\MR{0016094 (7,509h)}},
}

\bib{DaoIyamaTakahashiVial12}{article}{
author={Dao, Hailong},
author={Iyama, Osamu},
author={Takahashi, Ryo},
title={Non-commutative resolutions and Grothendieck groups},
journal={J. Noncommut. Geom.},
status={to appear},
eprint={arXiv:1205.4486 [math.AC]},
}

\bib{DaoTakahashi11}{article}{
   author={Dao, Hailong},
   author={Takahashi, Ryo},
   title={The radius of a subcategory of modules},
   journal={Algebra Number Theory},
   volume={8},
   date={2014},
   number={1},
   pages={141--172},
   issn={1937-0652},
   review={\MR{3207581}},
   doi={10.2140/ant.2014.8.141},
}


\bib{Dieterich87}{article}{
   author={Dieterich, E.},
   title={Reduction of isolated singularities},
   journal={Comment. Math. Helv.},
   volume={62},
   date={1987},
   number={4},
   pages={654--676},
   issn={0010-2571},
   review={\MR{920064 (90b:14003)}},
   doi={10.1007/BF02564469},
}

\bib{FerrandRaynaud70}{article}{
   author={Ferrand, D.},
   author={Raynaud, M.},
   title={Fibres formelles d'un anneau local noeth\'erien},
   language={French},
   journal={Ann. Sci. \'Ecole Norm. Sup. (4)},
   volume={3},
   date={1970},
   pages={295--311},
   issn={0012-9593},
   review={\MR{0272779 (42 \#7660)}},
}

\bib{HerzogPopescu97}{article}{
   author={Herzog, J.},
   author={Popescu, D.},
   title={Thom-Sebastiani problems for maximal Cohen-Macaulay modules},
   journal={Math. Ann.},
   volume={309},
   date={1997},
   number={4},
   pages={677--700},
   issn={0025-5831},
   review={\MR{1483829 (98k:13012)}},
   doi={10.1007/s002080050133},
}

\bib{Hochster73}{article}{
   author={Hochster, M.},
   title={Non-openness of loci in noetherian rings},
   journal={Duke Math. J.},
   volume={40},
   date={1973},
   pages={215--219},
   issn={0012-7094},
   review={\MR{0311653 (47 \#215)}},
}

\bib{Gabber13}{article}{
author={Illusie, L.},
author={Laszlo,  Y.},
author={Orgogozo,  F.},
title={Travaux de Gabber sur l'uniformisation locale et la cohomologie \'etale des sch\'emas quasi-excellents},
volume={S\'eminaire `a l'\'Ecole polytechnique 2006--2008},
date={2013},
eprint={arXiv:1207.3648v1 [math.AG]},
}

\bib{IyengarTakahashi14b}{article}{
author={Iyengar, Srikanth B.},
   author={Takahashi, Ryo},
   title={Annihilation of cohomology and decompositions of derived
   categories},
   journal={Homology Homotopy Appl.},
   volume={16},
   date={2014},
   number={2},
   pages={231--237},
   issn={1532-0073},
   review={\MR{3263893}},
   doi={10.4310/HHA.2014.v16.n2.a12},
}

\bib{IyengarTakahashi15}{article}{
author={Iyengar, Srikanth B.},
   author={Takahashi, Ryo},
   title={The Jacobian ideal of a noether algebra},
   status={in preparation},
}

\bib{Kawada51}{article}{
   author={Kawada, Y.},
   title={On the derivations in number fields},
   journal={Ann. of Math. (2)},
   volume={54},
   date={1951},
   pages={302--314},
   issn={0003-486X},
   review={\MR{0043830 (13,324g)}},
}

\bib{Kawada52}{article}{
   author={Kawada, Y.},
   title={On the derivation in simple algebras},
   journal={Sci. Papers Coll. Gen. Ed. Univ. Tokyo},
   volume={2},
   date={1952},
   pages={1--8},
   issn={0289-7520},
   review={\MR{0050572 (14,348f)}},
}

\bib{KellerVandenBergh08}{article}{
author={Keller, B.},
author={Van den Bergh, M.}, 
title = {On two examples by Iyama and Yoshino}, 
eprint={arXiv:0803.0720v1 [math.AC]},
}

\bib{Kuniyoshi58}{article}{
   author={Kuniyoshi, H.},
   title={Cohomology theory and different},
   journal={T\^ohoku Math. J. (2)},
   volume={10},
   date={1958},
   pages={313--337},
   issn={0040-8735},
   review={\MR{0116040 (22 \#6835)}},
}

\bib{Matsumura80}{book}{
   author={Matsumura, H.},
   title={Commutative algebra},
   series={Mathematics Lecture Note Series},
   volume={56},
   edition={2},
   publisher={Benjamin/Cummings Publishing Co., Inc., Reading, Mass.},
   date={1980},
   pages={xv+313},
   isbn={0-8053-7026-9},
   review={\MR{575344 (82i:13003)}},
}

\bib{Moriya53}{article}{
   author={Moriya, M.},
   title={Theorie der Derivationen und K\"orperdifferenten},
   language={German},
   journal={Math. J. Okayama Univ.},
   volume={2},
   date={1953},
   pages={111--148},
   issn={0030-1566},
   review={\MR{0054643 (14,952f)}},
}

\bib{Nagata59}{article}{
author={Nagata, M.},
   title={On the closedness of singular loci},
   journal={Inst. Hautes \'Etudes Sci. Publ. Math.},
   volume={1959},
   date={1959},
   pages={29--36},
   issn={0073-8301},
   review={\MR{0106908 (21 \#5638)}},
}

\bib{Nagata62}{book}{
   author={Nagata, M.},
   title={Local rings},
   series={Interscience Tracts in Pure and Applied Mathematics, No. 13},
   publisher={Interscience Publishers a division of John Wiley \& Sons\, New
   York-London},
   date={1962},
   pages={xiii+234},
   review={\MR{0155856 (27 \#5790)}},
}

\bib{Noether50}{article}{
   author={Noether, E.},
   title={Idealdifferentiation und Differente},
   language={German},
   journal={J. Reine Angew. Math.},
   volume={188},
   date={1950},
   pages={1--21},
   issn={0075-4102},
   review={\MR{0038337 (12,388b)}},
   }

\bib{Orlov09}{article}{
   author={Orlov, Dmitri},
   title={Remarks on generators and dimensions of triangulated categories},
   language={English, with English and Russian summaries},
   journal={Mosc. Math. J.},
   volume={9},
   date={2009},
   number={1},
   pages={153--159, back matter},
   issn={1609-3321},
   review={\MR{2567400 (2011a:14031)}},
}

\bib{PopescuRoczen90}{article}{
   author={Popescu, D.},
   author={Roczen, M.},
   title={Indecomposable Cohen-Macaulay modules and irreducible maps},
   note={Algebraic geometry (Berlin, 1988)},
   journal={Compositio Math.},
   volume={76},
   date={1990},
   number={1-2},
   pages={277--294},
   issn={0010-437X},
   review={\MR{1078867 (91j:13008)}},
}

\bib{Rouquier08a}{article}{
   author={Rouquier, R.},
   title={Dimensions of triangulated categories},
   journal={J. K-Theory},
   volume={1},
   date={2008},
   number={2},
   pages={193--256},
   issn={1865-2433},
   review={\MR{2434186 (2009i:18008)}},
   doi={10.1017/is007011012jkt010},
}

\bib{Rouquier08aa}{article}{
   author={Rouquier, R.},
   title={Erratum: ``Dimensions of triangulated categories'' [J. K-Theory
   {\bf 1} (2008), no. 2, 193--256; MR2434186]},
   journal={J. K-Theory},
   volume={1},
   date={2008},
   number={2},
   pages={257--258},
   issn={1865-2433},
   review={\MR{2434187 (2009i:18009)}},
   doi={10.1017/is007011012jkt010},
}

\bib{SchejaStorch73}{book}{
   author={Scheja, G.},
   author={Storch, U.},
   title={Lokale Verzweigungstheorie},
   language={German},
   note={Vorlesungen \"uber Kommutative Algebra (Wintersemester 1973/74);
   Schriftenreihe des Mathematischen Institutes der Universit\"at Freiburg,
   No. 5},
   publisher={Institut des Math\'ematiques, Universit\'e de Fribourg,
   Fribourg},
   date={1974},
   pages={viii+151},
   review={\MR{0412167 (54 \#294)}},
}

\bib{Takahashi10}{article}{
   author={Takahashi, R.},
   title={Classifying thick subcategories of the stable category of
   Cohen-Macaulay modules},
   journal={Adv. Math.},
   volume={225},
   date={2010},
   number={4},
   pages={2076--2116},
   issn={0001-8708},
   review={\MR{2680200 (2011h:13014)}},
   doi={10.1016/j.aim.2010.04.009},
}

\bib{Wang94}{article}{
   author={Wang, H.-J.},
   title={On the Fitting ideals in free resolutions},
   journal={Michigan Math. J.},
   volume={41},
   date={1994},
   number={3},
   pages={587--608},
   issn={0026-2285},
   review={\MR{1297711 (96b:13013)}},
   doi={10.1307/mmj/1029005082},
}

\bib{Wang99}{article}{
author={Wang, H.-J.},
   title={A uniform property of affine domains},
   journal={J. Algebra},
   volume={215},
   date={1999},
   number={2},
   pages={500--508},
   issn={0021-8693},
   review={\MR{1686203 (2000c:13023)}},
   doi={10.1006/jabr.1998.7745},
}

\bib{Weil43}{article}{
author={Weil, A.},
title={Differentiation in algebraic number-fields},
journal= {Bull. Amer. Math. Soc.},
volume= {49},
date= {1943}, 
pages={41},
}

\bib{Yoshino87}{article}{
   author={Yoshino, Y.},
   title={Brauer-Thrall type theorem for maximal Cohen-Macaulay modules},
   journal={J. Math. Soc. Japan},
   volume={39},
   date={1987},
   number={4},
   pages={719--739},
   issn={0025-5645},
   review={\MR{905636 (88k:13006)}},
   doi={10.2969/jmsj/03940719},
}

\bib{Yoshino90}{book}{
   author={Yoshino, Y.},
   title={Cohen-Macaulay modules over Cohen-Macaulay rings},
   series={London Mathematical Society Lecture Note Series},
   volume={146},
   publisher={Cambridge University Press},
   place={Cambridge},
   date={1990},
   pages={viii+177},
   isbn={0-521-35694-6},
   review={\MR{1079937 (92b:13016)}},
   doi={10.1017/CBO9780511600685},
}
\end{biblist}
\end{bibdiv}

\end{document}